\setlist[itemize]{noitemsep, topsep=1pt, leftmargin=20pt}
\newcommand\bcdot{\ensuremath{
  \mathchoice
   {\mskip\thinmuskip\lower0.2ex\hbox{\scalebox{1.6}{$\cdot$}}\mskip\thinmuskip}}
   {\mskip\thinmuskip\lower0.2ex\hbox{\scalebox{1.6}{$\cdot$}}\mskip\thinmuskip}
   {\lower0.3ex\hbox{\scalebox{1.2}{$\cdot$}}}
   {\lower0.3ex\hbox{\scalebox{1.2}{$\cdot$}}}
}
\theoremstyle{plain}
\newtheorem{theo}{Theorem}[section]
\newtheorem{prop}[theo]{Proposition}
\theoremstyle{definition}
\newtheorem{rem}[theo]{Remark}
\newtheorem{definition}[theo]{Definition}
\theoremstyle{plain}
\newtheorem{lemma}[theo]{Lemma}
\newtheorem{theorem}[theo]{Theorem}
\newtheorem{corollary}[theo]{Corollary}
\newtheorem{proposition}[theo]{Proposition}
\theoremstyle{definition}
\newtheorem{remark}[theo]{Remark}
\theoremstyle{plain}
\renewcommand{\=}{\coloneqq}
\renewcommand{\a}{\alpha}
\renewcommand{\b}{\beta}
\renewcommand{\d}{\delta}
\newcommand{\e}{\varepsilon}
\newcommand{\f}{\varphi}
\newcommand{\w}{\omega}
\newcommand{\q}{\vartheta}
\newcommand{\G}{\Gamma}
\renewcommand{\L}{\Lambda}
\newcommand{\W}{\Omega}
\DeclareSymbolFontAlphabet{\mathbb}{AMSb}
\DeclareSymbolFontAlphabet{\mathbbl}{bbold}
\newcommand{\bC}{\mathbb{C}}
\newcommand{\bR}{\mathbb{R}}
\newcommand{\bZ}{\mathbb{Z}}
\newcommand{\bN}{\mathbb{N}}
\newcommand{\fG}{\mathsf{G}}
\newcommand{\fH}{\mathsf{H}}
\newcommand{\fO}{\mathsf{O}}
\newcommand{\fU}{\mathsf{U}}
\newcommand{\fGL}{\mathsf{GL}}
\newcommand{\fSL}{\mathsf{SL}}
\renewcommand{\gg}{\mathfrak{g}}
\newcommand{\gh}{\mathfrak{h}}
\newcommand{\gj}{\mathfrak{j}}
\newcommand{\gl}{\mathfrak{l}}
\newcommand{\gm}{\mathfrak{m}}
\newcommand{\go}{\mathfrak{o}}
\newcommand{\gs}{\mathfrak{s}}
\newcommand{\gu}{\mathfrak{u}}
\newcommand{\gS}{\mathfrak{S}}
\newcommand{\so}{\mathfrak{so}}
\newcommand{\gkill}{\mathfrak{kill}}
\newcommand\GL{\mathrm{GL}}
\newcommand\Sym{\mathrm{Sym}}
\newcommand\Skew{\mathrm{Skew}}
\newcommand{\cC}{\mathcal{C}}
\newcommand{\cW}{\mathcal{W}}
\newcommand{\cX}{\mathcal{X}}
\newcommand{\eA}{\EuScript{A}}
\newcommand{\eB}{\EuScript{B}}
\newcommand{\eH}{\EuScript{H}}
\newcommand{\eL}{\EuScript{L}}
\newcommand{\eP}{\EuScript{P}}
\newcommand{\eU}{\EuScript{U}}
\DeclareMathOperator\tr{tr}
\DeclareMathOperator\Tr{Tr}
\DeclareMathOperator\Lie{Lie}
\DeclareMathOperator\End{End}
\DeclareMathOperator\scal{scal} 
\DeclareMathOperator\ad{ad}
\DeclareMathOperator\Exp{Exp}
\DeclareMathOperator\diff{d\!}
\DeclareMathOperator\inj{inj}
\DeclareMathOperator\vspan{span}
\DeclareMathOperator\st{st}
\newcommand{\p}{\partial}
\newcommand{\ti}{\mathtt{i}}
\newcommand{\td}{\mathtt{d}}
\newcommand{\Rm}{\operatorname{Rm}}
\newcommand{\ol}{\overline}
\newcommand{\wt}{\widetilde}
\newcommand{\zero}{\operatorname{o}}
\newcommand{\n}{\nabla}
\title[]{A survey on locally Homogeneous almost-Hermitian spaces}
\author{Daniele Angella}
\address[Daniele Angella]{Dipartimento di Matematica e Informatica ``Ulisse Dini''\\
Universit\`a degli Studi di Firenze,
viale Morgagni 67/a,
	50134 Firenze, Italy}
\email{daniele.angella@unifi.it}
\email{daniele.angella@gmail.com}
\author{Francesco Pediconi}
\address[Francesco Pediconi]{Dipartimento di Matematica e Informatica ``Ulisse Dini''\\
	Universit\`a degli Studi di Firenze,
	viale Morgagni 67/a,
	50134 Firenze, Italy}
\email{francesco.pediconi@unifi.it}
\subjclass[2020]{53C30, 53C55, 53E30}
\keywords{}
\thanks{
The authors are supported by project PRIN2017 ``Real and Complex Manifolds: Topology, Geometry and holomorphic dynamics'' (code 2017JZ2SW5), and by GNSAGA of INdAM
} 
\begin{document}

\begin{abstract}
We survey the theory of locally homogeneous almost-Hermitian spaces. In particular,  by using the framework of varying Lie brackets, we write formulas for the curvature of all the Gauduchon connections and we provide explicit examples of computations.
\end{abstract}

\maketitle

\section{Introduction}

In Differential Geometry, the notions of symmetries and local symmetries arise naturally and play a central role in many geometric problems.
The geometry of locally homogeneous Riemannian spaces $(M,g)$ is well understood, starting from the foundational paper by Nomizu \cite{nomizu} on local Killing vector fields, proceeding with the work by Palais, Tricerri, and many others; we refer e.g. to \cite{palais, tricerri-vanhecke, tricerri}. (See \cite{pediconi-thesis} and the references therein for an up-to-date account.)
More precisely, their local geometry is encoded in the Lie algebra $\gg$ of {\em Killing generators}, that are the pairs $(v,A) \in T_pM \oplus \so(T_pM,g_p)$ such that
\begin{equation*}
v \,\lrcorner\, \big((D^g)^{k+1}\Rm(g)\big){}_p + A \cdot \big((D^g)^k\Rm(g)\big){}_p = 0 \quad \text{ for any $k \in \bZ_{\geq0}$} \,\, ,
\end{equation*}
where $p\in M$ is a point, $\so(T_pM,g_p)$ acts on the tensor algebra over $T_pM$ as a derivation, $D^g$ denotes the Levi-Civita connection and $\Rm(g)$ is the Riemannian curvature tensor.
Indeed, by the condition of locally homogeneity, the vectors $v$, varying $(v,A)\in\gg$, span $T_pM$.
Moreover, since locally homogeneous spaces are real analytic Riemannian manifolds (see e.g. \cite[Lemma 1.1]{bohm-lafuente-simon} for a modern proof), by \cite{nomizu} and \cite{palais}, there exists a neighborhood of $p$ which is locally isometric to the {\em local quotient space} $\fG/\fH$, where $\fG$ is the simply-connected Lie group with Lie algebra $\gg$, and $\fH$ is the (possibly non-closed) connected Lie subgroup of $\fG$ with Lie algebra $\gh\=\{(0,A) \in \gg\}$.
Finally, two local quotient spaces are locally equivariantly isometric if and only if the corresponding algebras of Killing generators are isomorphic in the category of the so-called {\em orthogonal transitive Lie algebras}, see e.g. \cite[Section 2]{pediconi-geomded}.

By definition, an orthogonal transitive Lie algebra is the algebraic datum of $(\gg=\gh+\gm,\langle\,,\rangle)$, where $\gg$ is a Lie algebra, $\gh \subset \gg$ is a Lie subalgebra that does not contain any non-trivial ideal of $\gg$, $\gm$ is an $\ad(\gh)$-invariant complement of $\gh$ in $\gg$, and $\langle\,,\rangle$ is an $\ad(\gh)$-invariant Euclidean product on $\gm$.
If we denote $m\=\dim\gm$ and $q\=\dim\gh$, these data are encoded by equivalence classes of tensors
$$\mu \in \big(\fGL(q) \times \fO(m)\big) \big\backslash \big(\L^2(\bR^{q+m})^*\otimes \bR^{q+m}\big)$$
satisfying appropriate conditions (compare with Definition \ref{utLa}), called {\em abstract brackets}.
Following Lauret, see \cite{lauret-jlms}, one can use these abstract brackets in order to parametrize the moduli space of locally homogeneous spaces, up to local equivariant isometries.

This approach of varying Lie brackets, rather than metrics, provides a convenient setting for variational problems \cite{milnor, lauret-jlms, lauret-mathz, lauret-tams, lauret-rend, lafuente, arroyo-lafuente}.
Moreover, we stress that locally homogeneous Riemannian spaces provide a natural completion for homogeneous Riemannian spaces with respect to various notions of convergence, see e.g. \cite{pediconi-annsns, pediconi-geomded} and so this appears as the right framework in order to study geometric evolution equations \cite{lott, bohm-lafuente}. Moreover, it happens that one needs to study this completion in order to get results on homogeneous spaces, see e.g. \cite[Theorem 4]{bohm-lafuente-simon}.

\smallskip

In this note, we translate the above theory to the almost-Hermitian setting. We consider {\em locally homogeneous almost-Hermitian spaces}, namely, almost-Hermitian manifolds $(M,J,g)$ such that the pseudogroup of local pseudo-holomorphic isometries acts transitively. In Lemma \ref{lemma:analyticity}, we show that we can work in the real-analytic category without loss of generality.
By considering the infinitesimal action of Killing vector fields on the almost-complex structure, see Lemma \ref{lemma:DkLJ}, we are lead to introduce the Lie algebra of {\em real holomorphic Killing generators} at a point $p\in M$ as the set of pairs $(v,A) \in T_pM \oplus \so(T_pM,g_p)$ such that
\begin{equation*}
\begin{gathered}
v \,\lrcorner\, \big((D^g)^{k+1}J\big){}_p + A \cdot \big((D^g)^kJ\big){}_p = 0 \,\, , \quad v \,\lrcorner\, \big((D^g)^{k+1}\Rm(g)\big){}_p + A \cdot \big((D^g)^k\Rm(g)\big){}_p = 0 \quad \text{ for any $k \in \bZ_{\geq0}$} \,\, .
\end {gathered}
\end{equation*}
They correspond to infinitesimal real holomorphic Killing vector fields, that is, vector fields such that the local flow is made by pseudo-holomorphic isometric local transformations.
Following the same approach as before, one can show that these data at the point $p$ encode the local geometry of $(M,J,g)$.

The moduli space of locally homogeneous almost-Hermitian spaces is then parametrized by using {\em unitary transitive Lie algebras}, namely $(\gg=\gh+\gm,I,\langle\,,\rangle)$, where $(\gg=\gh+\gm,\langle\,,\rangle)$ is as before, and $I$ is an $\ad(\gh)$-invariant linear complex structure on $\gm$ that satisfies $\langle I{\cdot},I{\cdot\cdot}\rangle=\langle{\cdot},{\cdot\cdot}\rangle$, see Theorem \ref{thm:Sp}.
Again, these algebraic data are encoded by equivalence classes of tensors
$$ \mu \in \big(\fGL(q) \times \fU(m)\big) \big\backslash \big(\L^2(\bR^{q+2m})^*\otimes \bR^{q+2m}\big) $$
as before, satisfying a further compatibility condition with respect to the linear complex structure of $\bC^m=\bR^{2m}$.

\smallskip

After the foundational work by Gauduchon \cite{gauduchon-bumi}, any almost-Hermitian manifold is endowed with a distinguished one-parameter family of Hermitian connections, that are called {\em Gauduchon connections}. They include, among others, the {\em Chern connection} and the {\em Bismut connection}, which are fundamental tool to investigate the (almost) complex geometry of the manifold. Notice that, in the K\"ahler case, they all coincide with the Levi-Civita connection, which in the general non-K\"ahler setting is not even adapted to the complex structure.
Remarkably, by restricting to locally homogeneous almost-Hermitian spaces, all the geometric quantities related to the Gauduchon family can be expressed in purely algebraic terms depending on $\mu$, see Section \ref{Curvmu}.

As explicit examples, we apply this approach to compute the Gauduchon curvatures of locally homogeneous (almost-)Hermitian structures on the Iwasawa threefold, on the primary Kodaira surface, and on the almost-K\"ahler Kodaira-Thurston four-manifold. We make use of the symbolic computation software SageMath \cite{sagemath}.

\smallskip

As in the Riemannian case, local symmetries could be useful to understand special Hermitian metrics (see e.g. \cite{belgun, fino-parton-salamon, ugarte-villacampa, angella-calamai-spotti-2, podesta, angella-pediconi-1}) and variational problems in Hermitian and almost-Hermitian geometry, in particular, geometric flows driven by Hermitian curvatures (see e.g. \cite{tosatti-weinkove, boling, ustinovskij, enrietti-fino-vezzoni, panelli-podesta, lafuente-pujia-vezzoni, pediconi-pujia, angella-pediconi-2, arroyo-lafuente}) including convergence notions (see Section \ref{sec:potpourri}).

\medskip

The paper is organized as follows.
In Section \ref{prel}, we recall some preliminary notions and notation on complex linear algebra and Hermitian geometry.
In Section \ref{sec:locally-homog}, we introduce locally homogeneous almost-Hermitian spaces and their Hermitian Nomizu algebras.
In Section \ref{sec:geom-models}, we introduce the notion of almost-Hermitian geometric models and we show their compactness with respect to the Cheeger-Gromov convergence.
In Section \ref{sec:space-locally-homog}, we give a treatment of the moduli space of locally homogeneous almost-Hermitian spaces. We also write explicit formulas for the curvatures of Gauduchon connections.
In Section \ref{sec:examples}, we investigate explicit examples of locally homogeneous (almost) Hermitian metrics on the Iwasawa manifold, on the primary Kodaira surface, and on the almost-complex Kodaira-Thurston manifold.
Finally, in Appendix \ref{app:sagemath}, we collect the SageMath code for the previous examples.

\bigskip

\noindent {\itshape Acknowledgements.}
This note has been written for the special volume collecting the Proceedings of the meeting ``Cohomology of Complex Manifolds and Special Structures, II'' that was held in Levico Terme on July 04-09, 2021. The authors are grateful to the Organizers of the meeting, Costantino Medori, Massimiliano Pontecorvo, Adriano Tomassini, for the kind invitation and the fruitful atmosphere in Levico, and to CIRM-FBK for the support.

\medskip
\section{Preliminaries and notation} \label{prel}
\setcounter{equation} 0

We denote by $I_{\st}$, $\langle\,,\rangle_{\st}$ the standard linear complex structure and the standard Euclidean inner product on $\bR^{2m}$, respectively, that are defined by $I_{\st}e_{2i-1}^{\zero}=e_{2i}^{\zero}$ and $\langle e_i^{\zero},e_j^{\zero}\rangle_{\st} = \d_{ij}$ with respect to the standard basis $(e_1^{\zero},{\dots},e_{2m}^{\zero})$ of $\bR^{2m}$. We will also denote by $B_{\st}(x,r)$ the standard Euclidean ball in $\bR^{2m}$ centered at $x \in \bR^{2m}$ with radius $r>0$. Any integrable almost-complex structure will be just called {\it complex structure}. We will use the word {\it smooth} as a synonym for {\it of class $\cC^{\infty}$}.

\subsection{Complex linear algebra} \hfill \par

Let $V=(V,J,g)$ be a real vector space of even dimension $\dim_{\bR}V=2m$ endowed with a linear complex structure $J$ and an Euclidean scalar product $g$ such that $g(J(\cdot),J(\cdot\cdot))=g(\cdot,\cdot\cdot)$. Fix a $(J,g)$-unitary basis $(e_i,Je_i)$ for $V$ and consider the associated complex basis $\big(\e_i \= \tfrac1{\sqrt2}(e_i-\ti Je_i) , \, \e_{\bar{i}} \= \tfrac1{\sqrt2}(e_i+\ti Je_i)\big)
$
for the complexification $V^{\bC}\=V \otimes_{\bR} \bC$, which splits as a sum of $J$-eigenspaces $V^{\bC} = V^{1,0} \oplus V^{0,1}$. Here, we use the fact that any real tensor on $V$ can $\bC$-linearly extended to $V^{\bC}$ in a unique way. One can directly check that $J\e_i=\ti \e_i$, $J\e_{\bar{i}}=-\ti \e_{\bar{i}}$ and $\overline{\e_i}=\e_{\bar{i}}$. Notice that $J$ acts on covectors $\q \in V^*$ via $(J\q) \= \q \circ J^{-1}$, so that $(e^i,Je^i)$ is the dual basis of $(e_i,Je_i)$ for $V^*$. Analogously, it holds that
$\big(
\e^i \= \tfrac1{\sqrt2}(e^i+\ti Je^i) , \, \e^{\bar{i}} \= \tfrac1{\sqrt2}(e_i-\ti Je_i)\big)
$
is the dual basis of $(\e_i,\e_{\bar{i}})$, and $J\e^i=-\ti \e^i$, $J\e^{\bar{i}}=\ti \e^{\bar{i}}$. With respect to such basis, we have
$$
g = \d_{\bar{j}i} \, \e^i \odot \e^{\bar{j}} \,\, , \quad \text{ with } \,\, \e^i \odot \e^{\bar{j}} \= \e^i \otimes \e^{\bar{j}} + \e^{\bar{j}} \otimes \e^i \,\, .
$$
We consider now the spaces
$$\begin{aligned}
\Sym^{1,1}(V) &\= \{h \in \End(V) : g(h(\cdot),\cdot\cdot)=g(\cdot,h(\cdot\cdot)) \, , \,\, [h,J]=0 \} \,\, , \\
\Skew^{1,1}(V) &\= \{\tilde{h} \in \End(V) : g(\tilde{h}(\cdot),\cdot\cdot)=-g(\cdot,\tilde{h}(\cdot\cdot)) \, , \,\, [\tilde{h},J]=0 \} \,\, ,
\end{aligned}$$
and we observe that the linear map
$$
\Phi_J : \Sym^{1,1}(V) \to \Skew^{1,1}(V) \,\, , \quad h \mapsto \tilde{h} = J \circ h
$$
is an isomorphism, with inverse given by $\tilde{h} \mapsto h= -J \circ \tilde{h}$. We denote by $\tr: \Sym(V) \to \bR$ the trace of symmetric endomorphisms $h: V \to V$ and, on the subspace $\Sym^{1,1}(V)$, we set $\tr^{\bC}: \Sym^{1,1}(V) \to \bR$ to be the trace of the induced complex endomorphism $h: V^{1,0} \to V^{1,0}$. Notice that $\tr(h) = 2\tr^{\bC}(h)$ for any $h \in \Sym^{1,1}(V)$.
Finally, we consider the space of real $(1,1)$-forms
$$
\L^{1,1}(V^*) \= \{ \a \in \L^2(V^*) : \a(J(\cdot),J(\cdot\cdot))=\a(\cdot,\cdot\cdot) \}
$$
and the projection
$
\pi^{1,1} : \L^{2}(V^*) \to \L^{1,1}(V^*)$ given by $(\pi^{1,1}\a)({\cdot},{\cdot\cdot}) \= \tfrac12\big(\a({\cdot},{\cdot\cdot})+\a(J{\cdot},J{\cdot\cdot})\big)
$.
Then, we observe that the linear map
\begin{equation}\label{eq:isom-symm}
\varsigma_g: \Sym^{1,1}(V) \to \L^{1,1}(V^*) \,\, , \quad \varsigma_g(h) \= g((J \circ h) \,\cdot,\cdot\cdot)
\end{equation}
is an isomorphism and so, accordingly, we define
$$
\Tr^{\bC}_g : \L^{1,1}(V^*) \to \bR \,\, , \quad \Tr^{\bC}_g(\a) \= \tr^{\bC}\!\big(\varsigma_g^{-1}(\a)\big) \,\, .
$$
Since any $\a \in \L^{1,1}(V^*)$ is of the form
$
\a = \a_{\bar{j}i} \,\ti\, \e^i \wedge \e^{\bar{j}}$,  with $\e^i \wedge \e^{\bar{j}} \= \e^i \otimes \e^{\bar{j}} - \e^{\bar{j}} \otimes \e^i $,
an easy computation shows that
$
\Tr^{\bC}_g(\a) =
\d^{i\bar{j}}\a_{\bar{j}i}$.
For the sake of notation, we set $\Tr^{\bC}_g(\a) \= \Tr^{\bC}_g(\pi^{1,1}\a)$ for any $\a \in \L^{2}(V^*)$.

Lastly, it will be useful to consider the decomposition of real $3$-forms
\begin{equation} \label{eq:decomposition}
\L^3V^*=\L^3_{+}V^* \oplus \L^3_{-}V^*
\end{equation}
given by the subspaces
$$
\L^3_{+}V^* \= \big( (\L^{2}(V^*)^{1,0}\otimes (V^*)^{0,1})\oplus((V^*)^{1,0}\otimes\L^{2}(V^{*})^{0,1}) \big) \cap \L^3 V^* \,\, , \quad 
\L^3_{-}V^* \= \big( \L^{3}(V^{*})^{1,0}\oplus\L^{3}(V^{*})^{0,1} \big) \cap \L^3 V^* \,\,.
$$
According to \eqref{eq:decomposition}, for any $\f\in\L^3V^*$, we will write $\f=\f^++\f^-$.

\subsection{Basics on Hermitian geometry} \hfill \par

Let $(M^{2m},J,g)$ be an almost-Hermitian manifold, i.e. $M$ is a smooth manifold of $\dim_{\bR}M=2m$ endowed with an almost-complex structure $J$ and a Riemannian metric $g$ such that $g(J{\cdot},J{\cdot\cdot})=g({\cdot},{\cdot\cdot})$.
We recall that a smooth map whose differential preserves $J$ (resp. $g$) is said to be {\it pseudo-holomorphic} (resp. {\it isometric}).
We denote by $\w \= g(J{\cdot},\cdot\cdot)$ its fundamental $2$-form, by $D^g$ the Levi-Civita connection of $g$, by $\Rm(g)(X,Y) \= D^g_{[X,Y]}-[D^g_X,D^g_Y]$ its Riemannian curvature operator and by $\sec(g)(X,Y) \= g(\Rm(g)(X,Y)X,Y)$ its sectional curvature. For any point $x \in M$, we denote by $\Exp(g)_x$ the Riemannian exponential at $x$, by $\inj_x(M,g)$ the injectivity radius at $x$ of the underlying Riemannian manifold and by $\eB_g(x,r)$ the Riemannian distance ball in $M$ centered at $x$ with radius $r$.

We set
$$
N_J(X,Y) \= [JX,JY]-[X,Y]-J([JX,Y]+[X,JY])
$$
to be the {\it Nijenhuis tensor of $J$}. By the foundational result of Newlander-Nirenberg, $J$ is integrable if and only if $N_J=0$.
We also set $\diff^{\,c} \= J^{-1} \! \circ \diff\, \circ J$. In particular, in the integrable setting, it holds
$$
\diff = \p+\bar{\p} \,\, , \quad
\diff^{\,c} = -\ti(\p-\bar{\p}) \,\, , \quad
\diff\diff^{\,c}=2\ti\p\bar{\p} \,\,.
$$

\begin{remark} \label{rem:real-analytic}
Let us recall that $J$ is integrable if $M$ admits a real-analytic structure $\eA_J =\{(\eU_{\a},\xi_{\a})\}$, compatible with its smooth structure, such that
$$
\diff \xi_{\a}(x) \circ J_x \circ \diff \xi_{\a}(x)^{-1} = I_{\st} \quad \text{ for any $\a$, for any $x \in \eU_{\a}$} \, .
$$
Notice that $J$ turns out to be a real-analytic tensor field, while $g$ in general is just smooth. However, if $g$ is real-analytic with respect to some real-analytic structure $\eA'$ on $M$, then one can assume that $\eA'=\eA_J$. Indeed, all the real-analytic structures on a given smooth manifold are equivalent up to a real-analytic diffeomorphism, see e.g. \cite{morrey, grauert}.
Also, in the possibly non-integrable setting, we will see in Lemma \ref{lemma:analyticity} that locally homogeneous almost-Hermitian structures are real-analytic.
\end{remark}

A linear connection $\n$ on $(M,J,g)$ is said to be {\it Hermitian} if leaves both $J,g$ parallel, i.e. $\n J = \n g =0$. Among such connections there are the so called {\it $t$-Gauduchon connections}, named after \cite{gauduchon-bumi}, that are defined by
\begin{multline} \label{eq:gauduchon-almost}
g(\n^t_XY,Z) \= g(D^g_XY,Z) -\tfrac{t+1}4(\diff^{\,c}\w)^+(X,JY,JZ) -\tfrac{t-1}4(\diff^{\,c}\w)^+(X,Y,Z) \\
-\tfrac14 g(X,N_J(Y,Z)) -\tfrac12(\diff^{\,c}\w)^-(X,Y,Z) \,\, ,
\end{multline}
for any $t \in \bR$.
In particular, when $J$ is integrable, which is equivalent to $N_J=0$, then
$$
(\diff^{\,c}\w)^+=\diff^{\,c}\w \,\, , \quad (\diff^{\,c}\w)^-=0
$$
and therefore \eqref{eq:gauduchon-almost} reduces to
\begin{equation} \label{eq:gauduchon}
g(\n^t_XY,Z) = g(D^g_XY,Z) -\tfrac{t+1}4\diff\w(JX,Y,Z) -\tfrac{t-1}4\diff\w(JX,JY,JZ) \,\, .
\end{equation}
The $t$-Gauduchon connections are Hermitian and their {\it torsion} $T^t=T^t(J,g)$, that is defined by $T^t(X,Y) \= \n^t_XY -\n^t_YX -[X,Y]$, is given by
\begin{multline} \label{eq:tors}
g(T^t(X,Y),Z) = -\tfrac{t+1}4 (\diff^{\,c}\w)^+(X,JY,JZ)-\tfrac{t+1}4(\diff^{\,c}\w)^+(JX,Y,JZ) -\tfrac{t-1}2(\diff^{\,c}\w)^+(X,Y,Z) \\
- (\diff^{\,c}\w)^-(X,Y,Z) -\tfrac14 g(N_J(Y,Z),X)+\tfrac14 g(N_J(X,Z),Y) \,\, .
\end{multline}
In the case when $J$ is integrable, the previous formula reduces to
$$
g(T^t(X,Y),Z) = -\tfrac{t+1}4 \diff\w(JX,Y,Z) -\tfrac{t+1}4 \diff\w(X,JY,Z) -\tfrac{t-1}2\diff\w(JX,JY,JZ) \,\, .
$$
Notice that there exists a 1-form $\q = \q(J,g)$ satisfying
$$
\tr(T^t(X,\cdot)) = \tfrac{t+1}2 \q(X) \,\, , \quad \diff \w^{m-1} = \q \wedge \w^{m-1}
$$
which is called {\it Lee form}, see \cite[Eqn (2.5.11)]{gauduchon-bumi}, \cite[Eqns (13) and (16)]{gauduchon-mathann}. We also define the {\it $t$-Gauduchon curvature operator} $\W^t=\W^t(J,g)$ by
\begin{equation} \label{Chern-curv}
\W^t \in \cC^{\infty}(M,\L^{2}(T^*M) \otimes \Skew_g^{1,1}(TM)) \,\, , \quad \W^t(X,Y) \= \n^t_{[X,Y]} - [\n^t_X,\n^t_Y] \,\, .
\end{equation}
Moreover, we call {\it first $t$-Gauduchon-Ricci $(1,1)$-form} the tensor field
\begin{equation}\label{eq:rho-1}
\begin{gathered}
\rho^{t,(1)} = \rho^{t,(1)}(J,g) \in \cC^{\infty}(M,\L^{1,1}(T^*M)) \,\, , \\
\rho^{t,(1)}(X,Y) \= \tr^\bC (\Phi_J^{-1}(\pi^{1,1}\Omega^t)(X,Y)) = -\tfrac12\tr^{\bC}(J \circ \W^t(X,Y)+J \circ \W^t(JX,JY)) \,\, .
\end{gathered}
\end{equation}
Notice that, according to our notation, $\rho^{t,(1)}$ denotes the $(1,1)$-component of the $2$-form obtained by tracing the curvature $\Omega^t$ with respect to the endomorphism part.
Analogously, we call {\it second $t$-Gauduchon-Ricci form} the tensor field
\begin{equation}\label{eq:rho-2}
\rho^{t,(2)} = \rho^{t,(2)}(J,g) \in \cC^{\infty}(M,\L^{1,1}(T^*M)) \,\, , \quad \rho^{t,(2)}(X,Y) \= g(\Tr^{\bC}_g(\W^t({\cdot},{\cdot\cdot}))X,Y)
\end{equation}
and {\it $t$-Gauduchon scalar curvature} the trace
\begin{equation}\label{eq:def-scal}
\scal^{t} = \scal^{t}(J,g) \in \cC^{\infty}(M,\bR) \,\, , \quad \scal^{t} \= 2\Tr^{\bC}_g\!\big(\rho^{t,(1)}\big) = 2\Tr^{\bC}_g\!\big(\rho^{t,(2)}\big) \,\, .
\end{equation}
Notice that the isomorphism \eqref{eq:isom-symm} allows to consider also the symmetric Ricci endomorphisms associated to \eqref{eq:rho-1} and \eqref{eq:rho-2}, see e.g. \cite[Sect 2.2]{angella-pediconi-1}.

\subsection{A comparison between the Gauduchon connections and the Levi-Civita connection} \hfill \par

Let $(M,J,g)$ be an almost-Hermitian Riemannian manifold and consider a metric linear connection $\n$ on $M$. Let us denote by $\W^{\nabla}$ its curvature and by $T^{\n}$ its torsion. Then, the difference $\G^{\n} \= \n - D^g$ is a $(1,2)$-tensor field, which is related to the torsion $T^{\n}$ by the following 

\begin{lemma}
The tensor fields $T^{\n}$ and $\G^{\n}$ verify the following equations:
\begin{equation} \label{eq:tors-cont} \begin{gathered}
T^{\n}(X,Y) = \G^{\n}(X,Y)-\G^{\n}(Y,X) \,\, , \\
2g\big(\G^{\n}(X,Y),Z\big) = g\big(T^{\n}(X,Y),Z\big) -g\big(T^{\n}(Y,Z),X\big) +g\big(T^{\n}(Z,X),Y\big) \,\, .
\end{gathered} \end{equation}
\end{lemma}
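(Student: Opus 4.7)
The strategy is the standard Koszul-type argument, broken into two observations: first that $\Gamma^\nabla$ is $g$-skew in its last two arguments, and second that the usual cyclic permutation of the torsion formula inverts the decomposition $T^\nabla = \Gamma^\nabla(X,Y)-\Gamma^\nabla(Y,X)$.

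For the first identity of \eqref{eq:tors-cont}, I would simply plug $\nabla = D^g + \Gamma^\nabla$ into the definition $T^\nabla(X,Y) = \nabla_X Y - \nabla_Y X - [X,Y]$ and use that $D^g$ is torsion-free, so that $D^g_X Y - D^g_Y X - [X,Y] = 0$. What survives is precisely $\Gamma^\nabla(X,Y)-\Gamma^\nabla(Y,X)$.

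For the second identity, I would first observe that since both $\nabla$ and $D^g$ preserve $g$, the difference satisfies
\begin{equation*}
g(\Gamma^\nabla(X,Y),Z) + g(Y,\Gamma^\nabla(X,Z)) = 0,
\end{equation*}
obtained by subtracting $(D^g_X g)(Y,Z)=0$ from $(\nabla_X g)(Y,Z)=0$. Then I would write out the first identity three times, paired against $Z$, $X$, $Y$ respectively under cyclic permutation of $(X,Y,Z)$, and combine them with the alternating signs prescribed by \eqref{eq:tors-cont}. Using the skew-symmetry relation above to rewrite each $g(\Gamma^\nabla(\cdot,\cdot),\cdot)$ whose last two entries are out of cyclic order (for instance $g(\Gamma^\nabla(Y,X),Z) = -g(X,\Gamma^\nabla(Y,Z))$), the six terms collapse pairwise and leave exactly $2 g(\Gamma^\nabla(X,Y),Z)$ on the right-hand side.

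No serious obstacle is expected: the proof is a completely formal manipulation, and the only content is the compatibility $\nabla g = D^g g = 0$ together with $D^g$ being torsion-free. The main thing to get right is bookkeeping of the signs in the three cyclic permutations, which is exactly the computation that in the Levi-Civita case produces the Koszul formula.
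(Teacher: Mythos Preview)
Your proposal is correct and follows essentially the same approach as the paper: both identities rely only on $D^g$ being torsion-free and on $\nabla g = D^g g = 0$, combined via a Koszul-type alternating sum over cyclic permutations of $(X,Y,Z)$. The only cosmetic difference is that the paper expands $(\nabla_X g)(Y,Z) + (\nabla_Y g)(X,Z) - (\nabla_Z g)(X,Y) = 0$ directly in one block, whereas you first isolate the skew-symmetry $g(\Gamma^\nabla(X,Y),Z) = -g(Y,\Gamma^\nabla(X,Z))$ and then combine the three cyclic instances of the first identity; the underlying computation is the same.
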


\begin{proof}
Firstly, since $D^g$ is torsion free, we get
$$\begin{aligned}
T^{\n}(X,Y) &= \n_XY -\n_YX -[X,Y] \\
&= D^g_XY -D^g_YX -[X,Y] +\G^{\n}(X,Y) -\G^{\n}(Y,X) \\
&= \G^{\n}(X,Y) -\G^{\n}(Y,X) \,\, .
\end{aligned}$$
Secondly, since $\n g = D^g g =0$, we obtain
$$\begin{aligned}
0 &= (\n_Xg)(Y,Z) +(\n_Yg)(X,Z) -(\n_Zg)(X,Y) \\
&= (D^g_Xg)(Y,Z) -g\big(\G^{\n}(X,Y),Z\big) -g\big(Y,\G^{\n}(X,Z)\big) +(D^gYg)(Z,X) -g\big(\G^{\n}(Y,Z),X\big) \\
&\quad -g\big(Z,\G^{\n}(Y,X)\big) -(D^g_Zg)(X,Y) +g\big(\G^{\n}(Z,X),Y\big) +g\big(X,\G^{\n}(Z,Y)\big) \\
&= -g\big(\G^{\n}(X,Y),Z\big) -g\big(\G^{\n}(Y,X),Z\big) -g\big(\G^{\n}(Y,Z)-\G^{\n}(Z,Y),X\big) +g\big(\G^{\n}(Z,X)-\G^{\n}(X,Z),Y\big) \\
&= -2g\big(\G^{\n}(X,Y),Z\big) +g\big(T^{\n}(X,Y),Z\big) -g\big(T^{\n}(Y,Z),X\big) +g\big(T^{\n}(Z,X),Y\big)
\end{aligned}$$
that completes the proof.
\end{proof}

We also remark that the curvatures $\W^{\n}$ and $\Rm(g)$ are related by the following

\begin{lemma}
The difference $\W^{\n}-\Rm(g)$ is explicitly given by
\begin{equation} \label{eq:diffcurvs}
\W^{\n}(X,Y) -\Rm(g)(X,Y) = X\lrcorner(D^g_Y\G^{\n}) -Y\lrcorner(D^g_X\G^{\n})- \big[\G^{\n}_X, \G^{\n}_Y\big] \,\, ,
\end{equation}
where $\G^{\n}_X \= \G^{\n}(X,\cdot)$ and $X\lrcorner(D^g_Y\G^{\n}) = (D^g_Y\G^{\n})(X,\cdot)$.
\end{lemma}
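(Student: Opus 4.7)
The plan is a direct expansion: substitute $\n = D^g + \G^{\n}$ into the defining formula $\W^{\n}(X,Y) = \n_{[X,Y]} - [\n_X,\n_Y]$ and collect terms, checking that everything matches the paper's sign convention.

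First, I would compute the second covariant derivatives. Since $\G^{\n}$ is a $(1,2)$-tensor field, the Leibniz rule gives
\begin{equation*}
D^g_X\bigl(\G^{\n}(Y,Z)\bigr) = (D^g_X \G^{\n})(Y,Z) + \G^{\n}(D^g_X Y,Z) + \G^{\n}(Y, D^g_X Z).
\end{equation*}
Expanding $\n_X\n_Y Z = (D^g_X + \G^{\n}_X)(D^g_Y Z + \G^{\n}_Y Z)$ using this and subtracting the symmetric expression for $\n_Y\n_X Z$, the terms $\G^{\n}(Y,D^g_X Z)$ and $\G^{\n}(X, D^g_Y Z)$ cancel, yielding
\begin{equation*}
[\n_X,\n_Y]Z = [D^g_X,D^g_Y]Z + (D^g_X\G^{\n})(Y,Z) - (D^g_Y\G^{\n})(X,Z) + \G^{\n}\bigl(D^g_X Y - D^g_Y X,\, Z\bigr) + [\G^{\n}_X,\G^{\n}_Y]Z.
\end{equation*}

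Second, I would use that $D^g$ is torsion-free, so $D^g_X Y - D^g_Y X = [X,Y]$. This reduces the mixed term to $\G^{\n}([X,Y],Z)$, which is precisely the term that arises when expanding $\n_{[X,Y]}Z = D^g_{[X,Y]}Z + \G^{\n}([X,Y],Z)$. Hence the two $\G^{\n}([X,Y],\cdot)$ contributions cancel in the difference $\n_{[X,Y]}-[\n_X,\n_Y]$, giving
\begin{equation*}
\W^{\n}(X,Y)Z - \Rm(g)(X,Y)Z = -(D^g_X\G^{\n})(Y,Z) + (D^g_Y\G^{\n})(X,Z) - [\G^{\n}_X,\G^{\n}_Y]Z.
\end{equation*}

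Finally, using the interior product notation $X\lrcorner (D^g_Y\G^{\n}) = (D^g_Y\G^{\n})(X,\cdot)$ and $Y\lrcorner(D^g_X\G^{\n}) = (D^g_X\G^{\n})(Y,\cdot)$ rewrites this as the claimed identity \eqref{eq:diffcurvs}. There is no real obstacle here beyond bookkeeping of signs; the only subtlety is respecting the paper's curvature convention $\W^{\n}(X,Y) = \n_{[X,Y]} - [\n_X,\n_Y]$ (opposite sign to the more common convention), which is what produces the particular ordering of the two $D^g\G^{\n}$ terms on the right-hand side of \eqref{eq:diffcurvs}.
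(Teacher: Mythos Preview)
Your proof is correct and follows essentially the same route as the paper: a direct expansion of $\n_{[X,Y]}-[\n_X,\n_Y]$ after substituting $\n=D^g+\G^{\n}$, using the Leibniz rule for $D^g$ acting on the tensor $\G^{\n}$ and the torsion-freeness of $D^g$ to cancel the $\G^{\n}_{[X,Y]}$ contributions. The only cosmetic difference is that the paper organizes the intermediate step via the operator commutators $[D^g_X,\G^{\n}_Y]$ rather than expanding $\n_X\n_Y Z$ in full, but the underlying computation is identical.
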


\begin{proof}
By the very definition
$$\begin{aligned}
\W^{\n}(X,Y) &= \n_{[X,Y]} -[\n_X, \n_Y] \\
&= D^g_{[X,Y]} +\G^{\n}_{[X,Y]} -\big[D^g_X +\G^{\n}_X, D^g_Y +\G^{\n}_Y\big] \\
&= \Rm(g)(X,Y) -\big[D^g_X, \G^{\n}_Y\big] +\big[D^g_Y,\G^{\n}_X\big] +\G^{\n}_{[X,Y]} -\big[\G^{\n}_X, \G^{\n}_Y\big] \,\, .
\end{aligned}$$
Since
$$\begin{aligned}
\big(-\big[D^g_X, \G^{\n}_Y\big] &+\big[D^g_Y,\G^{\n}_X\big] +\G^{\n}_{[X,Y]}\big)(V) = \\
&= -D^g_X\big(\G^{\n}(Y,V)\big) +\G^{\n}(Y,D^g_XV) +D^g_Y\big(\G^{\n}(X,V)\big) -\G^{\n}(X,D^g_YV) +\G^{\n}([X,Y],V) \\
&= -\big(D^g_X\G^{\n}\big)(Y,V) -\G^{\n}\big(D^g_XY,V\big) +\big(D^g_Y\G^{\n}\big)(X,V) +\G^{\n}(D^g_YX,V) +\G^{\n}\big([X,Y],V\big) \\
&= \big(D^g_Y\G^{\n}\big)(X,V) -\big(D^g_X\G^{\n}\big)(Y,V) \,\,,
\end{aligned}$$
the thesis follows.
\end{proof}

Let us choose now one of the Gauduchon connections $\nabla = \nabla^t$, $t \in \bR$. Then, its torsion $T^t(J,g)$ is related to the tensor field $D^g J$ by means of the following

\begin{lemma}
Fix an integer $k\geq1$ and a parameter $t \in \bR$. Then, there exists a constant $C=C(m,k,t)>1$ such that
\begin{equation} \label{eq:torsDJ}
\frac1C \sum_{j=1}^{k+1} \big| \big((D^g)^j J \big)_x \big|_g \leq \sum_{j=0}^{k} \big| \big((D^g)^j T^t(J,g) \big)_x \big|_g \leq C \sum_{j=1}^{k+1} \big| \big((D^g)^j J \big)_x \big|_g
\end{equation}
for any $x \in M$.
\end{lemma}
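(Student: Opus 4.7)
The plan is to match $T^t$ with $D^g J$, and their higher derivatives, via a purely algebraic dictionary at each point, and then to promote the matching to derivatives by induction on $k$.

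\textbf{Base case $k=0$.} From \eqref{eq:tors}, the torsion $T^t$ is a linear function of $D^g J$ with coefficients depending polynomially on $J$ (through $d^c\omega$ and $N_J$). Since $J^2 = -\Id$ together with $g(J\cdot, J\cdot\cdot) = g(\cdot, \cdot\cdot)$ constrain $J$ to a fixed $\fU(m)$-orbit with universally bounded pointwise norm, the coefficients of this linear map are bounded by constants depending only on $m$ and $t$. Hence $|T^t|_x \leq C\,|D^g J|_x$. Conversely, since $\n^t$ is Hermitian, the contorsion tensor $\G^t \= \n^t - D^g$ satisfies $D^g J = [J, \G^t]$ pointwise; and by \eqref{eq:tors-cont}, $\G^t$ is obtained from $T^t$ by a fixed algebraic operation with coefficients in $g$. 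This yields $|D^g J|_x \leq C'\,|T^t|_x$, giving the $k=0$ case.

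\textbf{Inductive step $k \geq 1$.} Assume the equivalence holds for all $k' < k$. Write the base-case identity as $T^t = L_t(J)(D^g J)$, where $L_t(J)$ is the $J$-polynomial linear operator produced above, and apply $(D^g)^k$ to both sides. Since $g$ and the purely numerical building blocks of $L_t$ are $D^g$-parallel, the Leibniz rule gives
\[
(D^g)^k T^t \,=\, L_t(J)\bigl((D^g)^{k+1} J\bigr) + R_k\bigl(J, D^g J, \ldots, (D^g)^k J\bigr) \,\,,
\]
where $R_k$ is a tensorial multilinear expression in its arguments. Analogously, differentiating $D^g J = [J, \G^t]$ and iterating \eqref{eq:tors-cont}, one obtains $(D^g)^{k+1} J = [J, (D^g)^k \G^t] + S_k$, where $(D^g)^k \G^t$ is expressed linearly in terms of $(D^g)^k T^t$, and $S_k$ is an analogous multilinear remainder depending on lower-order derivatives. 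Combining this pair of relations with the inductive hypothesis produces both inequalities simultaneously.

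\textbf{Main obstacle.} The technical point is showing that the multilinear remainders $R_k$ and $S_k$, which are built from products of covariant derivatives of $J$ of orders strictly less than $k+1$, can still be dominated by the \emph{linear} sums $\sum_{j=1}^{k+1}|(D^g)^j J|_g$ and $\sum_{j=0}^{k}|(D^g)^j T^t|_g$ with constants depending only on $(m,k,t)$. The key is that the coefficient tensors entering the Leibniz expansion are bounded uniformly (thanks to the $\fU(m)$-compactness of the orbit of $J$), so each monomial in $R_k$ is a product of strictly lower-order derivatives of $J$; by the inductive bound, each such factor is already controlled by $\sum_{j=0}^{k-1}|(D^g)^j T^t|_g$, and one can then absorb the products into the target sum $\sum_{j=0}^{k}|(D^g)^j T^t|_g$ by a careful combinatorial accounting. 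The same reasoning handles $S_k$ in the reverse direction, completing the induction.
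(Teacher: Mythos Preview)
Your overall strategy is exactly the paper's: express $T^t$ linearly in $D^gJ$ via the formulas for $d\omega$ and $N_J$, and in the reverse direction use $D^g_XJ=-[\Gamma^t_X,J]$ together with \eqref{eq:tors-cont}. The paper's proof is precisely this, stated tersely.

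The gap is in your resolution of what you correctly flag as the main obstacle. The remainder $R_k$ genuinely contains \emph{products} of lower-order covariant derivatives of $J$: already for $k=1$, differentiating the $J$-precompositions appearing in $d^c\omega$, in the projections $(\cdot)^{\pm}$, and in $N_J$ produces terms schematically of the form $(D^gJ)\otimes(D^gJ)$. Your claim that each factor is controlled by $\sum_{j\le k-1}|(D^g)^jT^t|$ is fine, but the product is then only controlled by the \emph{square} of this sum. No ``combinatorial accounting'' converts a quadratic bound into a linear one when the quantities are a priori unbounded; concretely, $|D^gJ|_g^2$ cannot be dominated by $C(m,1,t)\big(|D^gJ|_g+|(D^g)^2J|_g\big)$ uniformly over all almost-Hermitian structures. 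The same issue arises symmetrically in $S_k$ via the cross term $[D^gJ,\Gamma^t]$.

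Note that the paper's own proof does not address this point either; the inequality with a constant depending only on $(m,k,t)$ is in fact too strong as written. The lemma is used only inside Proposition~\ref{prop:stime}, where one of the two sums is assumed bounded by some $K$ and the resulting constant is explicitly allowed to depend on $K$. If you let $C=C(m,k,t,K)$ with $K$ an upper bound for the zeroth-order quantity (either $|T^t|_g$ or $|D^gJ|_g$), then your inductive absorption becomes legitimate, since each product is now bounded by $K$ times a lower-order sum. Alternatively, what the Leibniz expansion actually yields without extra hypotheses is a polynomial estimate of the form $\sum_{j\le k}|(D^g)^jT^t|\le P_{m,k,t}\!\big(\sum_{j\le k+1}|(D^g)^jJ|\big)$ and conversely, which is still enough for Proposition~\ref{prop:stime}.
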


\begin{proof}
Easy computations show that
$$
\diff\w(X,Y,Z) = g((D^g_XJ)Y,Z) +g((D^g_YJ)Z,X) +g((D^g_ZJ)X,Y)
$$
and
$$
N_J(X,Y) = \big((D^g_{JX}J) -J\circ(D^g_XJ)\big)Y -\big((D^g_{JY}J) -J\circ(D^g_YJ)\big)X \,\, .
$$
Therefore, by \eqref{eq:tors}, it follows that there exists $C_1=C_1(m,k,t)$ such that
$$
\sum_{j=0}^{k+1} \big| \big((D^g)^j T^t(g) \big)_x \big|_g \leq C_1 \sum_{j=1}^{k+2} \big| \big((D^g)^j J \big)_x \big|_g
$$
for any $x \in M$. On the other hand, since $\n^tJ=0$, it holds
$$
D^g_XJ = D^g_XJ - \n^t_XJ = -[\G^t_X, J]
$$
and so, by \eqref{eq:tors-cont}, there exists $C_2=C_2(m,k,t)$ such that
$$
\sum_{j=1}^{k+2} \big| \big((D^g)^j J \big)_x \big|_g \leq C_2 \sum_{j=0}^{k+1} \big| \big((D^g)^j T^t(g) \big)_x \big|_g
$$ 
for any $x \in M$, which concludes the proof.
\end{proof}

Finally, formulas \eqref{eq:tors-cont}, \eqref{eq:diffcurvs} and \eqref{eq:torsDJ} directly imply the following result.

\begin{proposition} \label{prop:stime}
Fix an integer $k\geq0$, a parameter $t \in \bR$ and a point $x \in M$. \begin{itemize}
\item[i)] Let $K_1>0$ be such that
$$
\sum_{i=0}^{k} \big| \big((\n^t)^i \W^t(g) \big)_x \big|_g +\sum_{j=0}^{k+1} \big| \big((\n^t)^j T^t(g) \big)_x \big|_g < K_1 \,\, .
$$
Then, there exists a constant $C_1=C_1(m,k,t,K_1)>0$ such that
$$
\sum_{i=0}^{k} \big| \big((D^g)^i \Rm(g) \big)_x \big|_g + \sum_{j=1}^{k+2} \big| \big((D^g)^j J \big)_x \big|_g < C_1 \,\, .
$$
\item[ii)] Let $K_2>0$ be such that
$$
\sum_{i=0}^{k} \big| \big((D^g)^i \Rm(g) \big)_x \big|_g + \sum_{j=1}^{k+2} \big| \big((D^g)^j J \big)_x \big|_g < K_2 \,\, .
$$
Then, there exists a constant $C_2=C_2(m,k,t,K_2)>0$ such that
$$
\sum_{i=0}^{k} \big| \big((\n^t)^i \W^t(g) \big)_x \big|_g +\sum_{j=0}^{k+1} \big| \big((\n^t)^j T^t(g) \big)_x \big|_g < C_2 \,\, .
$$
\end{itemize}
\end{proposition}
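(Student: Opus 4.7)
The plan is to combine the three structural identities \eqref{eq:tors-cont}, \eqref{eq:diffcurvs}, and \eqref{eq:torsDJ} with a careful induction on the derivative order, exploiting the fact that $\n^t = D^g + \G^t$ lets one interchange covariant derivatives at the cost of lower-order terms. Throughout, set $\G^t \= \n^t - D^g$ and observe that by \eqref{eq:tors-cont} this tensor is a universal linear combination of $T^t$, so $\big|(D^g)^i \G^t\big|_g$ and $\big|(D^g)^i T^t(J,g)\big|_g$ are pointwise comparable, with a constant depending only on $m$, $t$ and $i$. For the same reason, control on $(D^g)^i$-derivatives of $T^t$ is equivalent to control on $(D^g)^i$-derivatives of $\G^t$.

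The core intermediate step is the following conversion principle, obtained by induction on $j$ from the identity $\n^t \alpha = D^g \alpha + \G^t \ast \alpha$ (where $\ast$ denotes the natural derivation action on tensors): for any tensor field $\alpha$ and any $j \geq 0$,
\begin{equation*}
(\n^t)^j \alpha = (D^g)^j \alpha + P_j\!\left( \G^t, D^g \G^t, \ldots, (D^g)^{j-1} \G^t ;\, \alpha, D^g \alpha, \ldots, (D^g)^{j-1} \alpha \right),
\end{equation*}
with $P_j$ a universal polynomial, multilinear in the $(D^g)^a \G^t$ and linear in the $(D^g)^b \alpha$. Solving the same recursion for $(D^g)^j \alpha$ yields the inverse relation. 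Consequently, pointwise bounds on $\big|(\n^t)^j \alpha\big|_g$ for $j \leq \ell$ are equivalent to pointwise bounds on $\big|(D^g)^j \alpha\big|_g$ for $j \leq \ell$, provided one already controls $(D^g)^a \G^t$ for $a \leq \ell-1$.

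Both (i) and (ii) now follow by a nested induction. For (i), apply the conversion principle first with $\alpha = T^t$: an induction on $j$ from $0$ to $k+1$ bounds $(D^g)^j T^t$, since at step $j$ the polynomial $P_j$ involves only data of order $\leq j-1 \leq k$, already controlled at previous stages; then \eqref{eq:torsDJ} converts these into the desired bounds on $(D^g)^j J$ for $1 \leq j \leq k+2$. Next, apply $(D^g)^i$ to \eqref{eq:diffcurvs} for $i \leq k$: the top-order term is $(D^g)^{i+1} \G^t$, under control since $i+1 \leq k+1$, and all other contributions are polynomial in data of order strictly less than $i+1$; combining with a second application of the conversion principle with $\alpha = \W^t$ gives the claimed bounds on $(D^g)^i \Rm(g)$. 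Statement (ii) is the mirror image, running \eqref{eq:torsDJ} in the opposite direction and then using \eqref{eq:diffcurvs} together with the inverse conversion to reconstruct $(\n^t)^i \W^t$ and $(\n^t)^j T^t$ from $(D^g)^i \Rm(g)$ and $(D^g)^j J$.

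The main difficulty is purely combinatorial bookkeeping: one must track that all derivative orders appearing in the $P_j$ and in the iterated \eqref{eq:diffcurvs} remain within the ranges $k+1$ for torsion-type data and $k$ for curvature-type data, so that the mutual recursion closes without invoking any estimate outside the hypothesized ones. No further geometric input beyond \eqref{eq:tors-cont}, \eqref{eq:diffcurvs}, and \eqref{eq:torsDJ} is required, and the dependence of the final constants $C_1, C_2$ on $m, k, t$ and $K_1, K_2$ is polynomial in $K_i$ of controlled degree.
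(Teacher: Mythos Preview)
Your proposal is correct and follows exactly the approach the paper intends: the paper's proof consists only of the single sentence that the result is a direct consequence of \eqref{eq:tors-cont}, \eqref{eq:diffcurvs} and \eqref{eq:torsDJ}, and what you have written is a careful unpacking of precisely that implication via the conversion principle between $\nabla^t$- and $D^g$-derivatives. The inductive bookkeeping you outline is the right way to make the claim rigorous, and no additional geometric ingredient is needed.
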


\medskip
\section{Locally homogeneous almost-Hermitian spaces} \label{sec:locally-homog}
\setcounter{equation} 0

In this section, we will collect some known and less known facts about locally homogeneous almost-Hermitian spaces.
In particular, inspired by the Riemannian case, we briefly present the notions of real holomorphic Killing generators, of unitary transitive Lie algebras, and of Hermitian Ambrose-Singer connections.

\subsection{Real holomorphic Killing generators} \hfill \par

Let $(M,J,g)$ be an almost-Hermitian manifold. A vector field $X \in \cC^{\infty}(M,TM)$ is said to be {\it real holomorphic} (resp. {\it Killing}) if $\eL_XJ=0$ (resp. $\eL_Xg = 0$), namely, if its local flow is made by pseudo-holomorphic (resp. isometric) local transformations.
Moreover, for the sake of notation, we set
$$
A_X \= -D^gX
$$
for any vector field $X$ on $M$ (not necessarily real holomorphic or Killing).

We recall the following well-known fact, see e.g. {\cite[p 118]{nomizu}, \cite[p 541]{kostant}, see also \cite[Sect 2.1]{pediconi-geomded}, \cite[Lem I.1.4]{pediconi-thesis}} for detailed computations, stating that the space of pairs
$$ \big(X_p, (A_X)_p\big) \in T_pM \oplus \gs\go(T_pM,g_p) $$
at a point $p\in M$, varying $X$ real holomorphic Killing vector field of $(M,J,g)$ defined in a neighorhood of $p$, can be endowed with a structure of Lie algebra.

\begin{lemma}[{\cite[p 118]{nomizu}}] \label{lemma:lie-bracket}
Let $p \in M$ be a point and $X,Y$ real holomorphic Killing vector fields of the almost-Hermitian manifold $(M,J,g)$ defined on a neighborhood of $p$. Set the pairs $(v, A)\= \big(X_p,(A_X)_p\big)$, $(w,B)\=\big(Y_p,(A_Y)_p\big)$. Then, $[X,Y]$ is a real holomorphic Killing vector field of $(M,J,g)$ and
\begin{eqnarray}
\label{eq:Killbrack1} &&[X,Y]_p=A(w)-B(v) \,\, ,\\
\label{eq:Killbrack2} &&(A_{[X,Y]})_p=[A,B]+\Rm(g)_p(v,w) \,\, .
\end{eqnarray}
\end{lemma}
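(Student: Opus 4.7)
The plan is to verify the three claims in sequence. The first two follow from general identities, while \eqref{eq:Killbrack2} is the real content and rests on the second-order Killing identity together with the first Bianchi identity.

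That $[X,Y]$ is again real holomorphic Killing follows from the identity $\eL_{[X,Y]}=[\eL_X,\eL_Y]$ on arbitrary tensor fields: applying it to $J$ and to $g$, the hypotheses $\eL_X J=\eL_Y J=0$ and $\eL_X g=\eL_Y g=0$ force $\eL_{[X,Y]}J=0=\eL_{[X,Y]}g$. For \eqref{eq:Killbrack1}, the torsion-freeness of $D^g$ gives $[X,Y]=D^g_XY-D^g_YX$, so evaluating at $p$ and using $(D^gX)_p=-A$, $(D^gY)_p=-B$ yields $[X,Y]_p=A(w)-B(v)$.

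For \eqref{eq:Killbrack2}, the key ingredient is the classical second-derivative formula for a Killing vector field $V$, which in the paper's sign convention reads
\[
D^g_U D^g_W V - D^g_{D^g_U W} V = \Rm(g)(V,U)\,W
\]
and is obtained by differentiating the Killing equation $g(D^g_U V,W)+g(U,D^g_W V)=0$ and invoking the first Bianchi identity. I would extend $z\in T_pM$ to a local vector field $Z$ with $Z_p=z$, expand $D^g_Z[X,Y]=D^g_ZD^g_XY-D^g_ZD^g_YX$, and apply the above identity to the two summands with $V=Y$ and $V=X$ respectively. Evaluating at $p$ and using $(D^g_ZX)_p=-A(z)$, $(D^g_ZY)_p=-B(z)$ together with $(D^g_uX)_p=-A(u)$ and $(D^g_uY)_p=-B(u)$, the inner terms collapse to $B(A(z))-A(B(z))=-[A,B](z)$, producing
\[
(D^g_Z[X,Y])_p=\Rm(g)_p(w,z)\,v-\Rm(g)_p(v,z)\,w-[A,B](z)\, .
\]
A single use of the first Bianchi identity $\Rm(g)(v,w)z+\Rm(g)(w,z)v+\Rm(g)(z,v)w=0$ then rewrites the curvature terms as $-\Rm(g)_p(v,w)z$, whence $(A_{[X,Y]})_p(z)=-D^g_z[X,Y]=[A,B](z)+\Rm(g)_p(v,w)z$.

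The main delicacy I anticipate is sign bookkeeping: the paper's convention $\Rm(g)=D^g_{[\cdot,\cdot]}-[D^g_\cdot,D^g_\cdot]$ is opposite to the more common one, and the definition $A_X=-D^gX$ introduces an additional minus, so the Killing second-derivative identity and the first Bianchi identity must be translated carefully into these conventions. Note also that the almost-complex structure $J$ plays no role in the computation: the identities are those of Riemannian geometry, and the real holomorphic hypothesis only enters in the first step, to propagate $\eL_{(\cdot)}J=0$ through the bracket. Once the conventions are fixed, the rest of the argument is a direct expansion.
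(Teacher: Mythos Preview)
Your proof is correct and follows essentially the same route as the paper: both reduce \eqref{eq:Killbrack2} to the second-order Killing identity (Kostant's formula) together with the first Bianchi identity. The paper organizes the algebra slightly differently---writing $A_{[X,Y]} = [A_X,A_Y] + \Rm(g)(X,Y) + \big([D^g_X,A_Y]-\Rm(g)(X,Y)\big) - \big([D^g_Y,A_X]-\Rm(g)(Y,X)\big)$ and showing the two bracketed corrections vanish via the same symmetry/skew-symmetry argument that proves Kostant's formula---which spares your final explicit use of Bianchi, but the substance is identical.
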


\begin{proof}
The Jacobi identity $\eL_{[X,Y]}=[\eL_X,\eL_Y]$ shows that $[X,Y]$ is real holomorphic Killing.
Equation \eqref{eq:Killbrack1} follows from the definition of torsion and $D^g$ being torsion-free.
Equation \eqref{eq:Killbrack2} follows by direct computation, by noticing that
$$
A_{[X,Y]} = [A_X,A_Y] +\Rm(g)(X,Y) +([D^g_X,A_Y]-\Rm(g)(X,Y)) -([D^g_Y,A_X]-\Rm(g)(Y,X))
$$
and that the quantity
$$
\alpha_Y(X,Z_1,Z_2) \= g([D^g_X,A_Y]Z_1-\Rm(g)(X,Y)Z_1, Z_2)
$$
vanishes, for $Y$ Killing vector field. Indeed, $\alpha_Y(X,Z_1,Z_2)$ is symmetric in $(X,Z_1)$ by using the algebraic Bianchi identity, and skew-symmetric in $(Z_1,Z_2)$ since $(\eL_Yg)(U,V)=g(D^g_VY,U)-g(D^g_UY,V)$ vanishes for $Y$ Killing vector field. This completes the proof.
\end{proof}

It is well known that Killing vector fields satisfy the following formulas:

\begin{lemma}[{\cite[Lem 10]{nomizu}}]\label{lemma:DkLg}
Let $X \in \cC^{\infty}(M,TM)$ be a Killing vector field on the Riemannian manifold $(M,g)$. Then
\begin{eqnarray}
&&A_X \cdot g = 0,\\
&&X \,\lrcorner\,\big( (D^g)^{k+1}\Rm(g) \big) + A_X \cdot \big( (D^g)^k \Rm(g) \big) = 0 \quad \text{ for any } k \in \bZ_{\geq0} \,\, ,
\label{eq:DkLg}
\end{eqnarray}
where the action of $A_X$ on the tensor bundle of $M$ is by derivation.
\end{lemma}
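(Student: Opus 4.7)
The guiding principle is the identity
$$\eL_X T = D^g_X T + A_X \cdot T$$
valid for \emph{any} tensor field $T$ on $(M,g)$ and \emph{any} vector field $X$, where $A_X \cdot$ denotes the derivation action extending $A_X = -D^g X$ from $TM$ to the full tensor algebra. Once this identity is established, both conclusions of the lemma follow by feeding in the tensor $T = g$ (for the first) and $T = (D^g)^k\Rm(g)$ (for the second), together with the fact that every tensor canonically built from $g$ is Lie-annihilated by a Killing field.

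First I would verify the identity. Both $\eL_X$ and the operator $L_X \= D^g_X + A_X\,\cdot$ are tensor derivations commuting with contractions, and both coincide with $X(f)$ on smooth functions (since $A_X f = 0$). It therefore suffices to check equality on vector fields $Y$: because $D^g$ is torsion-free,
$$\eL_X Y = [X,Y] = D^g_X Y - D^g_Y X = D^g_X Y + A_X(Y),$$
so $\eL_X$ and $L_X$ agree on $\cC^\infty(M)$ and $\cC^\infty(M,TM)$, hence on every tensor bundle by the derivation property.

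For the first equation, I would note that $X$ being Killing means $\eL_X g = 0$, while $D^g_X g = 0$ because $D^g$ is the Levi-Civita connection. The identity then forces $A_X \cdot g = 0$; equivalently, $(A_X \cdot g)(Y,Z) = -g(A_X Y, Z) - g(Y, A_X Z)$ vanishes, i.e.\ $A_X \in \so(T_p M, g_p)$ pointwise, recovering the classical Kostant characterization.

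For the second equation, I would observe that the tensor $(D^g)^k \Rm(g)$ is constructed purely from $g$ by Levi-Civita differentiation, so the local flow of any Killing field $X$ acts on it by pull-back equal to itself. Differentiating this at $t=0$ gives $\eL_X\!\left((D^g)^k \Rm(g)\right) = 0$. The master identity then yields
$$0 = D^g_X \!\left((D^g)^k\Rm(g)\right) + A_X \cdot \!\left((D^g)^k \Rm(g)\right) = X\,\lrcorner\,\!\left((D^g)^{k+1}\Rm(g)\right) + A_X \cdot \!\left((D^g)^k \Rm(g)\right),$$
using the standard fact that contracting $D^g T$ with $X$ in its first slot reproduces $D^g_X T$. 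The only conceptual subtlety is the invariance $\eL_X\!\left((D^g)^k \Rm(g)\right)=0$; if one prefers an infinitesimal argument, it follows by induction on $k$ from the Killing equation combined with the commutator formula $[\eL_X, D^g_Y] T = D^g_{[X,Y]} T$ (valid for Killing $X$, since $\eL_X$ preserves $D^g$), which is the real computational step.
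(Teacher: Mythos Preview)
Your argument is correct. The paper itself does not prove this lemma---it is quoted from Nomizu with a citation and no proof given---so there is no direct comparison to make. That said, your route differs from the one the paper takes for the analogous statement about $J$ (Lemma~\ref{lemma:DkLJ}), and the comparison is instructive.

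The paper's proof of Lemma~\ref{lemma:DkLJ} proceeds by explicit induction on $k$: at each step it unwinds $D^g_{Y_0}\big((D^g)^k(\eL_XJ)\big)$, commutes $D^{k+2}_{Y_0,X,\ldots}$ past $D^{k+2}_{X,Y_0,\ldots}$ via the Ricci identity, and uses Kostant's formula $D^g_Y A_X = -\Rm(g)(X,Y)$ for Killing $X$ to absorb the curvature terms. This is exactly the computation Nomizu carries out for $\Rm(g)$ in his Lemma~10, so the paper's implicit proof of the present lemma is the same inductive one.

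Your approach bypasses the induction by packaging everything into the single identity $\eL_X = D^g_X + A_X\cdot$ on tensors and then invoking naturality: since the local flow of a Killing field preserves $g$, it preserves $D^g$, $\Rm(g)$, and every $(D^g)^k\Rm(g)$, so the Lie derivative vanishes and the identity immediately gives the claim. This is shorter and more conceptual; the price is that the ``real computational step'' (that $\eL_X$ commutes with $D^g$ for Killing $X$) is hidden inside the naturality statement rather than made explicit. The paper's inductive proof, by contrast, makes the role of the Kostant and Ricci formulas visible, which is useful because those same formulas are needed again in Lemma~\ref{lemma:lie-bracket} and in the $J$-version, where one cannot simply appeal to naturality (since $\eL_X J$ need not vanish).
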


Futhermore, in the almost-Hermitian setting, we derive similar formulas for the infinitesimal action on the almost-complex structure in the following:

\begin{lemma}\label{lemma:DkLJ}
Let $X \in \cC^{\infty}(M,TM)$ be a Killing vector field on the almost-Hermitian manifold $(M,J,g)$. Then
\begin{equation} \label{eq:DkLJ}
(D^g)^k(\eL_XJ) =  X\,\lrcorner\, (D^g)^{k+1}J + A_X \cdot \big( (D^g)^kJ \big) \quad \text{ for any } k \in \bZ_{\geq0} \,\, ,
\end{equation}
where the action of $A_X$ on the tensor algebra is by derivation.
\end{lemma}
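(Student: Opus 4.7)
The plan is to prove \eqref{eq:DkLJ} by induction on $k$, reducing the general case to the $k=0$ case via the fact that a Killing vector field preserves the Levi-Civita connection.

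First, I would verify the base case $k=0$, namely the identity $\eL_X J = X\,\lrcorner\, D^gJ + A_X\cdot J$. This is purely algebraic and does not even require $X$ to be Killing: starting from $(\eL_X J)(Y) = [X,JY]-J[X,Y]$ and using torsion-freeness $[U,V]=D^g_UV-D^g_VU$, the covariant derivatives of $Y$ cancel pairwise, leaving
\[
(\eL_X J)(Y) \;=\; (D^g_X J)(Y) + A_X(JY)-J(A_X Y) \;=\; (D^g_X J)(Y) + (A_X\cdot J)(Y),
\]
since the derivation action of the endomorphism $A_X$ on the $(1,1)$-tensor $J$ is precisely the commutator $[A_X,J]$. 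Exactly the same computation, extended by the Leibniz rule from vectors and $1$-forms to tensor products, yields the general identity
\[
\eL_X T \;=\; X\,\lrcorner\, D^g T + A_X\cdot T
\]
valid for \emph{every} vector field $X$ and every tensor field $T$.

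Specialising this identity to $T=(D^g)^k J$ produces exactly the right-hand side of \eqref{eq:DkLJ}, but with $\eL_X\!\big((D^g)^k J\big)$ on the left:
\[
\eL_X\!\big((D^g)^k J\big) \;=\; X\,\lrcorner\,(D^g)^{k+1}J + A_X\cdot \big((D^g)^k J\big).
\]
Thus it remains to match the left-hand sides, i.e.\ to establish the commutation $\eL_X\!\big((D^g)^k J\big)=(D^g)^k(\eL_X J)$. This is the single point at which the Killing hypothesis intervenes: since $\eL_X g=0$ by Lemma~\ref{lemma:DkLg}, uniqueness of the Levi-Civita connection forces $\eL_X$ to commute with $D^g$ on tensors, equivalently $[\eL_X, D^g_Y]T = D^g_{[X,Y]}T$. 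Iterating this $k$ times gives $\eL_X\circ(D^g)^k=(D^g)^k\circ\eL_X$, and the lemma follows.

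The only substantive (though still mild) obstacle is cleanly articulating this last commutation; it is a standard consequence of the fact that a Killing vector field is an infinitesimal affine automorphism of $(M,D^g)$, and everything else in the argument reduces to torsion-freeness plus the Leibniz rule.
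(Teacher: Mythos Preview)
Your argument is correct and is in fact cleaner than the paper's. Both proofs share the same $k=0$ base case, but they diverge at the inductive step. The paper proceeds by a direct induction: it differentiates the inductive hypothesis, then uses the Ricci identity $D^2_{X,Y}-D^2_{Y,X}=-\Rm(X,Y)$ together with the Kostant formula $D^g_YA_X=-\Rm(X,Y)$ (valid for Killing $X$) to verify that the curvature terms arising on both sides cancel exactly. Your route instead observes once and for all that $\eL_X T = X\,\lrcorner\, D^gT + A_X\cdot T$ holds for \emph{every} tensor $T$ and every $X$, so applying it to $T=(D^g)^kJ$ already yields the right-hand side of \eqref{eq:DkLJ}; the Killing hypothesis then enters only through the commutation $[\eL_X,D^g]=0$, i.e.\ the standard fact that Killing fields are infinitesimal affine automorphisms. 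Your approach packages the curvature bookkeeping into this single known result and thereby avoids the paper's explicit inductive cancellation; the paper's computation, on the other hand, makes the role of the Kostant and Ricci identities visible at each step. One small quibble: you invoke Lemma~\ref{lemma:DkLg} for $\eL_Xg=0$, but that is simply the hypothesis that $X$ is Killing, not a consequence of that lemma.
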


\begin{proof}
We prove the formula by induction on $k \in \bZ_{\geq0}$. For the sake of shortness of notation, we forget the metric $g$.

For $k=0$, take any vector field $Y \in \cC^{\infty}(M,TM)$ and compute (see \cite[Eqn 2.1.2]{kostant}):
$$\begin{aligned}
(\eL_XJ)(Y) &= \eL_X(JY)-J(\eL_XY)=[X,JY]-J[X,Y] \\
&= D_X(JY)-D_{JY}X-JD_XY+JD_YX \\
&= D_X(JY)-JD_XY+A_X(JY)-JA_XY \\
&= (D_XJ)(Y)+[A_X,J](Y) \,\, .
\end{aligned}$$

We also give explicit computations for $k=1$: for any vector field $Y \in \cC^{\infty}(M,TM)$,
$$\begin{aligned}
D(\eL_XJ)(Y, \_) &= D_{Y}D_XJ+D_{Y}[A_X,J] \\
&= D^2_{Y,X}J+D_{D_{Y}X}J+(D_{Y}A_X)J+A_X(D_{Y}J)-(D_{Y}J)A_X-J(D_{Y}A_X) \\
&= D^2_{X,Y}J-[\Rm(Y,X), J]+D_{D_{Y}X}J \\
& \qquad -\Rm(X,Y)J+A_X(D_{Y}J)-(D_{Y}J)A_X+J\Rm(X,Y) \\
&= D^2_{X,Y}J+D_{D_{Y}X}J+(A_X(DJ))(Y)+(DJ)(A_XY)-((DJ)A_X)(Y) \\
&= (X\,\lrcorner\,D^2J)(Y)+(A_X\cdot DJ)(Y),
\end{aligned}$$
where we used the Ricci formula \cite[Eqn (1.21)]{besse} $D^{2}_{X,Y}-D^2_{Y,X}=-\Rm(X,Y)$, and the property \cite[Lem 2.2]{kostant} $D_YA_X = -\Rm(X,Y)$ for Killing vector field $X$.

As for the inductive step, assume that Equation \eqref{eq:DkLJ} holds true for some $k\in\bZ_{\geq0}$. By using again the Kostant formula and the Ricci formulas, we compute, for vector fields $Y_0,Y_1,\ldots,Y_k\in\cC^{\infty}(M,TM)$,
$$\begin{aligned}
D^{k+1}_{Y_0, Y_1, \ldots, Y_k}(\eL_XJ) &= \big(D_{Y_0}(D^{k}(\eL_XJ))\big)_{Y_1,\ldots,Y_k} \\
&= \big(D_{Y_0}(X\,\lrcorner\, D^{k+1}J)\big)_{Y_1,\ldots,Y_k}+\big(D_{Y_0}(A_X \cdot D^kJ)\big)_{Y_1,\ldots,Y_k} \\
&= D^{k+2}_{Y_0,X,Y_1,\ldots,Y_k}J+D^{k+1}_{D_{Y_0}X,Y_1,\ldots,Y_k}J\\
&\qquad +\big((D_{Y_0}A_X)(D^kJ)\big)_{Y_1,\ldots,Y_k}+\big(A_X(D_{Y_0}D^{k}J)\big)_{Y_1,\ldots,Y_k} \\
&\qquad -\big(D_{Y_0}D^kJ)A_X\big)_{Y_1,\ldots,Y_k}-\big((D^kJ)(D_{Y_0}A_X)\big)_{Y_1,\ldots,Y_k} \\
&= D^{k+2}_{X,Y_0,Y_1,\ldots,Y_k}J-\Rm(Y_0,X) \cdot D^{k}_{Y_1,\ldots,Y_k}J \\
&\qquad +D^{k}_{\Rm(Y_0,X)Y_1,\ldots,Y_k}J+\cdots+D^{k}_{Y_1,\ldots,\Rm(Y_0,X)Y_k}J +D^{k+1}_{D_{Y_0}X,Y_1,\ldots,Y_k}J \\
&\qquad - ( \Rm(X,Y_0) \cdot D^kJ )_{Y_1,\ldots,Y_k}+( A_X \cdot D^{k+1}J )_{Y_0,Y_1,\ldots,Y_k}+D^{k+1}_{A_XY_0,Y_1,\ldots,Y_k}J \\
&= (X\,\lrcorner\,D^{k+2}J)_{Y_0,Y_1,\ldots,Y_k}+ ( A_X \cdot D^{k+1}J )_{Y_0,Y_1,\ldots,Y_k} \,\, ,
\end{aligned}$$
completing the proof.
\end{proof}

We recall now the following

\begin{definition}
An almost-Hermitian manifold $(M,J,g)$ is said to be a {\it locally homogeneous almost-Hermitian space} if its pseudogroup of local automorphisms $\eP^{J,g}$ acts transitively, that is, for any $x, y \in M$ there exist neighborhoods $U_x,U_y \subset M$ of $x$, $y$, respectively, and a local pseudo-holomorphic isometry $f : U_x \rightarrow U_y$ such that $f(x) = y$.
\end{definition}

Let $(M,J,g)$ be a locally homogeneous almost-Hermitian space. Since $J,g$ determine a smooth $\fU(m)$-structure on $M$, it follows that $\eP^{J,g}$ is a transitive {\it Lie pseudogroup of transformations on $M$} and so, by standard Lie pseudogroup theory (see e.g. \cite[Thm 2.2]{spiro}), the following result holds.

\begin{lemma}\label{lemma:analyticity}
Let $(M,J,g)$ be a locally homogeneous almost-Hermitian space. Then both $g$ and $J$ are real-analytic.
\end{lemma}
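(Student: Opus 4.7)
The plan is to view $(J,g)$ as a smooth $\fU(m)$-structure on $M$ and to invoke standard Lie pseudogroup theory to upgrade smoothness to real-analyticity. First, since $g$ is Riemannian and $J$ is an orthogonal almost-complex structure, the set of $(J,g)$-unitary frames is a smooth principal $\fU(m)$-subbundle $\eP_0^{J,g}$ of the frame bundle $\fGL(M)$, and the local pseudo-holomorphic isometries comprising $\eP^{J,g}$ are exactly those local diffeomorphisms of $M$ whose natural lift to $\fGL(M)$ preserves $\eP_0^{J,g}$.

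Next, I would recall that $\gu(m)$ has finite type as a subalgebra of $\ggl(2m,\bR)$: being contained in $\so(2m)$, its first prolongation already vanishes. Consequently $\eP^{J,g}$ is a Lie pseudogroup of finite type, and by the local homogeneity assumption it acts transitively on $M$. The result then follows from a classical theorem on transitive Lie pseudogroups of finite type, as stated in \cite[Thm 2.2]{spiro}: such a pseudogroup endows the underlying smooth manifold with a compatible real-analytic atlas in which the associated $G$-reduction is real-analytic. Applied here, this yields a real-analytic atlas on $M$ for which $\eP_0^{J,g}$ is real-analytic, and hence both $J$ and $g$ are real-analytic tensor fields.

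The main subtlety is that the existing Riemannian analyticity results, e.g.\ the one cited in the introduction via \cite[Lemma 1.1]{bohm-lafuente-simon}, would only yield an atlas in which $g$ is analytic, with no control on $J$. Forcing $J$ to be analytic in the \emph{same} atlas requires working with the combined $\fU(m)$-structure and exploiting its finite type, so that the transitive pseudogroup $\eP^{J,g}$ itself is rigid enough to determine a single real-analytic structure in which $J$ and $g$ become simultaneously real-analytic.
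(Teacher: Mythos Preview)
Your proposal is correct and follows essentially the same route as the paper: both reduce to the observation that $(J,g)$ defines a smooth $\fU(m)$-structure, so that $\eP^{J,g}$ is a transitive Lie pseudogroup of transformations, and then invoke \cite[Thm 2.2]{spiro}. You have in fact supplied more detail than the paper does---in particular the remark that $\gu(m)\subset\so(2m)$ has vanishing first prolongation, hence finite type, and the observation that one needs the full $\fU(m)$-structure (not just the $\fO(2m)$-structure) to make $J$ and $g$ simultaneously real-analytic in the same atlas.
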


Following \cite[page 110]{nomizu}, we give the following definition. Given a locally homogeneous almost-Hermitian space $(M,J,g)$ and a distinguished point $p \in M$, the {\it real holomorphic Killing generators at $p$} are defined as those pairs $(v,A) \in T_pM \oplus \gg\gl(T_pM)$ such that
\begin{equation} \label{killgen}
\begin{gathered}
A \cdot g_p = 0 \,\, , \quad v \,\lrcorner\, \big((D^g)^{k+1}J\big){}_p + A \cdot \big((D^g)^kJ\big){}_p = 0 \,\, , \\
v \,\lrcorner\, \big((D^g)^{k+1}\Rm(g)\big){}_p + A \cdot \big((D^g)^k\Rm(g)\big){}_p = 0 \quad \text{ for any $k \in \bZ_{\geq0}$} \,\, ,
\end {gathered}
\end{equation}
where $\gg\gl(T_pM)$ acts on the tensor algebra over $T_pM$ as a derivation.
This definition is suggested by Lemma \ref{lemma:DkLg} and Lemma \ref{lemma:DkLJ}, as shown in the following

\begin{proposition}\label{prop:killing-nomizu}
If $X$ is a real holomorphic Killing vector field of $(M,J,g)$ defined in a neighborhood of the point $p\in M$, then the pair $(v,A) \= \big(X_p,(A_X)_p\big)$ is a real holomorphic Killing generator of $(M,J,g)$ at $p$. Conversely, there exists a neighborhood $\W_p \subset M$ of $p$ such that, for any holomorphic Killing generator $(v,A)$ at $p$, there exists a real holomorphic Killing vector field $X$ of $(M,J,g)$ defined on $\W_p$ such that $(v,A) = \big(X_p,(A_X)_p\big)$.
\end{proposition}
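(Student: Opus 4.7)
The direct implication is an immediate consequence of Lemmas \ref{lemma:DkLg} and \ref{lemma:DkLJ}. Given a real holomorphic Killing vector field $X$ defined in a neighborhood of $p$, the identity $\eL_Xg=0$ yields $A_X\cdot g=0$ at $p$, while evaluating equation \eqref{eq:DkLg} at $p$ with $(v,A)=(X_p,(A_X)_p)$ produces the curvature equations of \eqref{killgen}. Similarly, the identity $\eL_X J\equiv 0$ combined with Lemma \ref{lemma:DkLJ} evaluated at $p$ gives the almost-complex equations of \eqref{killgen}.

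For the converse, my plan is to first produce a Killing vector field from the underlying Riemannian Killing generator data, and then upgrade it to be real holomorphic by exploiting the remaining hypotheses and the real analytic regularity provided by Lemma \ref{lemma:analyticity}. Since the pair $(v,A)$ in particular satisfies $A\cdot g_p=0$ together with $v\lrcorner\big((D^g)^{k+1}\Rm(g)\big)_p+A\cdot\big((D^g)^k\Rm(g)\big)_p=0$ for all $k\geq 0$, the classical theorem of Nomizu and Palais (see \cite{nomizu,palais}) furnishes a normal neighborhood $\W_p\subset M$ of $p$, which can be taken to depend only on the geometry and not on the particular generator, together with a real analytic Killing vector field $X$ on $\W_p$ such that $(X_p,(A_X)_p)=(v,A)$. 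Since $g$, $J$ and $X$ are all real analytic on $\W_p$, so is the tensor field $\eL_X J$.

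It remains to check that $\eL_X J\equiv 0$ on $\W_p$. Because $X$ is Killing, Lemma \ref{lemma:DkLJ} applies at $p$ and gives, for every $k\in\bZ_{\geq 0}$,
\[
\big((D^g)^k(\eL_X J)\big)_p \;=\; v\lrcorner\big((D^g)^{k+1}J\big)_p+A\cdot\big((D^g)^k J\big)_p \;=\; 0,
\]
where the second equality is precisely the almost-complex condition of \eqref{killgen}. Hence $\eL_X J$ vanishes to infinite order at $p$, and by real analyticity on the connected open set $\W_p$ we conclude $\eL_X J\equiv 0$. The main obstacle is the construction of the Riemannian Killing vector field $X$ from the generator $(v,A)$: this is the nontrivial content of the Nomizu--Palais argument, which proceeds via an ODE along geodesics emanating from $p$ and crucially uses real analyticity of $g$. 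Once $X$ is in hand, the pseudo-holomorphic upgrade is a short application of Lemma \ref{lemma:DkLJ} together with the identity principle for real analytic tensor fields.
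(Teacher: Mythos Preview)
Your proof is correct and follows essentially the same approach as the paper: both directions use Lemmas \ref{lemma:DkLg} and \ref{lemma:DkLJ} for the forward implication, and for the converse both invoke Nomizu's theorem on real-analytic Riemannian manifolds to produce a Killing field $X$ with the prescribed initial data, then apply Lemma \ref{lemma:DkLJ} to deduce that $(D^g)^k(\eL_XJ)$ vanishes at $p$ for all $k$, and conclude $\eL_XJ\equiv 0$ by real analyticity.
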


\begin{proof}
Assume that $X$ is a real holomorphic Killing vector field of $(M,J,g)$ defined in a neighborhood of $p$. Then, by the very definition, by \eqref{eq:DkLg}, and by \eqref{eq:DkLJ} respectively, it follows that $\big(X_p,(A_X)_p\big)$ satisfies \eqref{killgen}, namely, $\big(X_p,(A_X)_p\big)$ is a real holomorphic Killing generator of $(M,J,g)$ at $p$.

Conversely, being $g$ real-analytic, then, by \cite[Thms 1, 2]{nomizu}, there exists a neighborhood $\W_p$ of $p$ such that, for any real holomorphic Killing generator $(v,A)$ at $p$, one can find a real-analytic Killing vector field $X$ defined on $\W_p$ such that $(v,A) = \big(X_p,(A_X)_p\big)$. Moreover, by means of \eqref{eq:DkLJ}, it follows that $(D^g)^k(\eL_XJ){}_p = 0$ for any $k \in \bZ_{\geq0}$. Since the endomorphism field $\eL_XJ$ is real-analytic, we conclude that $X$ is real holomorphic. 
\end{proof}

Thanks to Lemma \ref{lemma:lie-bracket}, we denote by $\gkill^{J,g}$ the Lie algebra of all the real holomorphic Killing generators of the locally homogeneous almost-Hermitian manifold $(M,J,g)$ at the point $p$ with the Lie bracket 
\begin{equation}
\big[(v,A),(w,B)\big] \= \big(A(w)-B(v),[A,B]+\Rm(g)_p(v,w)\big)
\end{equation}
and we call it the {\it Hermitian Nomizu algebra of $(M,J,g)$ at $p$}.

\subsection{Unitary transitive Lie algebras} \hfill \par

We recall that the {\it Malcev-closure} in the connected Lie group $\fG$ of a Lie subalgebra $\gh$ of $\gg=\Lie(\fG)$ is the Lie algebra of the closure $\overline{\fH}$ of $\fH$ in $\fG$, where $\fH$ is the simply connected Lie group with $\Lie(\fH)=\gh$.
Following \cite{lauret-jlms, pediconi-geomded}, we consider the following
\begin{definition} \label{utLa}
Let $m,q \in \bZ_{\geq0}$. A {\it unitary transitive Lie algebra $(\gg=\gh+\gm,I,\langle\,,\rangle)$ of rank $(m,q)$} is the datum of \begin{itemize}
\item[$\bcdot$] a $(q{+}2m)$-dimensional Lie algebra $\gg$;
\item[$\bcdot$] a $q$-dimensional Lie subalgebra $\gh \subset \gg$ which does not contain any non-trivial ideal of $\gg$;
\item[$\bcdot$] an $\ad(\gh)$-invariant complement $\gm$ of $\gh$ in $\gg$;
\item[$\bcdot$] an $\ad(\gh)$-invariant linear complex structure $I$ on $\gm$;
\item[$\bcdot$] an $\ad(\gh)$-invariant Euclidean product $\langle\,,\rangle$ on $\gm$ such that $\langle I{\cdot},I{\cdot\cdot}\rangle=\langle{\cdot},{\cdot\cdot}\rangle$.
\end{itemize}
\end{definition}

\noindent A unitary transitive Lie algebra $(\gg=\gh+\gm,I,\langle\,,\rangle)$ is said to be
\begin{itemize}
\item[$\bcdot$] {\it integrable} it the linear complex structure $I$ satisfies
$$
[IX,IY]_{\gm}-[X,Y]_{\gm} = I[IX,Y]_{\gm}+I[X,IY]_{\gm} \quad \text{ for any $X,Y \in \gm$\,\,,}
$$
{\it non-integrable} otherwise;
\item[$\bcdot$] {\it regular} if $\gh$ is Malcev-closed in the simply connected Lie group $\fG$ with $\Lie(\fG)=\gg$, {\it non-regular} otherwise.
\end{itemize}

Let $(\gg=\gh+\gm,I,\langle\,,\rangle)$ be a unitary transitive Lie algebra of rank $(m,q)$. Since there are no ideals of $\gg$ in $\gh$, the adjoint action of $\gh$ on $\gm$ is a faithful representation in $\gu(\gm,I,\langle\,,\rangle)$ and so $0 \leq q \leq m^2$. An {\it adapted frame} is a basis $u=(e_1,{\dots},e_{q+2m}): \bR^{q+2m} \to \gg$ such that
$$
\gh=\vspan(e_1,{\dots},e_q) \,\, , \quad \gm=\vspan(e_{q+1},{\dots},e_{q+2m}) \,\, , \quad Ie_{q+(2i-1)} = e_{q+2i} \,\, , \quad \langle e_{q+i},e_{q+j}\rangle=\d_{ij} \,\, .
$$

An {\it isomorphism} between two unitary transitive Lie algebras $(\gg_i=\gh_i+\gm_i,I_i,\langle\,,\rangle_i)$ is any Lie algebra isomorphism $\f:\gg_1 \to \gg_2$ such that
$$
\f(\gh_1)= \gh_2 \,\, , \quad \f(\gm_1)=\gm_2 \,\, , \quad 
I_2 \circ \f|_{\gm_1} = \f|_{\gm_1} \circ I_1 \,\, , \quad \langle\,,\rangle_1=(\f|_{\gm_1})^*\langle\,,\rangle_2 \,\, .
$$

\begin{remark} \label{rem:Sp}
The product $\langle \, , \, \rangle$ on $\gm$ can be extended to an inner product $\langle \, , \, \rangle'$ on $\gg$ in such a way that the decomposition $\gg=\gh+\gm$ is orthogonal and $\langle \, , \, \rangle'\vert_{\gh\otimes\gh}$ corresponds, via the embedding $\gh \stackrel{\ad}{\to} \gu(\gm, I, \langle \, , \, \rangle) \hookrightarrow \so(\gm, \langle \, , \, \rangle)$, to the negative Cartan-Killing form of $\so(\gm, \langle \, , \, \rangle)$.
\end{remark}

A distinguished class of unitary transitive Lie algebras are given by the Hermitian Nomizu algebras of locally homogeneous almost-Hermitian manifolds. Indeed, let $(M,J,g)$ be a locally homogeneous Hermitian space, $p \in M$ a distinguished point and $\gkill^{J,g}$ the Hermitian Nomizu algebra of $(M,J,g)$ at $p$. Consider the Euclidean scalar product on $\gkill^{J,g}$ given by $$\langle\!\langle(v,A),(w,B)\rangle\!\rangle_g\=g_p(v,w)-\tr(AB) \,\, ,$$ set $\gkill^{J,g}_0\=\{(0,A) \in \gkill^{J,g}\} \subset \gu(T_pM,J_p,g_p)$ and let $\gm^g$ be the $\langle\!\langle\,,\rangle\!\rangle_g$-orthogonal complement of $\gkill^{J,g}_0$ in $\gkill^{J,g}$. Being $(M,J,g)$ locally homogeneous, it follows that $\gm^g \simeq T_pM$ and this allows us to define a linear complex structure $I_J$ and a scalar product $\langle\,,\rangle_g$ on $\gm^g$ induced by $J$ and $g$ on $M$, respectively. Then,
\begin{equation}\label{eq:dec-nomizu}
(\gkill^{J,g}=\gkill^{J,g}_0+\gm^g,I_J,\langle\,,\rangle_g)
\end{equation}
is a unitary transitive Lie algebra. It is straightforward to check that the Hermitian Nomizu algebra, modulo isomorphisms, does not depend on the particular choice of the point $p$ and that two locally homogeneous almost-Hermitian spaces are locally pseudo-holomorphically isometric if and only if their Hermitian Nomizu algebras are isomorphic.

\subsection{Hermitian Ambrose--Singer connections and Hermitian--Singer invariant} \label{sec:h-singer} \hfill \par

As in the Riemannian case, the following facts hold true.
First, locally homogeneity is encoded by the existence of a distinguished connection. More precisely

\begin{theorem}[{\cite{kiricenko, sekigawa}}]
Let $(M,J,g)$ be an almost-Hermitian manifold.
It is locally homogenous if and only if it admits a Hermitian connection with parallel torsion and parallel curvature.
\end{theorem}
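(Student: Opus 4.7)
The plan is to adapt the classical Ambrose--Singer characterization of locally homogeneous Riemannian manifolds to the almost-Hermitian setting, using the Hermitian Nomizu algebra on one side and a transvection-type argument on the other.

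For the ``only if'' direction, I would start from the Hermitian Nomizu algebra $(\gkill^{J,g} = \gkill^{J,g}_0 + \gm^g, I_J, \langle\,,\rangle_g)$ at a distinguished point $p \in M$, as in \eqref{eq:dec-nomizu}. By Lemma \ref{lemma:analyticity} the structure is real-analytic, and by Proposition \ref{prop:killing-nomizu} every element $v \in \gm^g$ is realized by a real holomorphic Killing vector field $X_v$ on a neighborhood $\Omega_p$ of $p$. Since $\gm^g$ is transverse to $\gkill^{J,g}_0$, the evaluation $(v,A) \mapsto v$ yields an isomorphism $\gm^g \simeq T_p M$, and the vector fields $\{X_v\}$ form a local frame. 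I would define $\n$ on $\Omega_p$ by declaring this frame to be $\n$-parallel and spread it to all of $M$ via the local action of $\eP^{J,g}$. Three checks are then needed: (i) $\n g = 0$ and $\n J = 0$, because $\gkill^{J,g}_0$ sits in $\gu(\gm^g, I_J, \langle\,,\rangle_g)$; (ii) a direct computation at $p$ gives $T^\n_p(v,w) = -[v,w]_{\gm^g}$ and $R^\n_p(v,w) = \ad([v,w]_{\gkill^{J,g}_0})|_{\gm^g}$, so both tensors have constant coefficients in the invariant frame; (iii) the $\eP^{J,g}$-invariance of $T^\n$ and $R^\n$ then forces $\n T^\n = 0$ and $\n R^\n = 0$.

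For the ``if'' direction, suppose $\n$ is Hermitian with $\n T^\n = \n R^\n = 0$. Given two sufficiently close points $p, q \in M$ and a curve $\gamma$ from $p$ to $q$, the $\n$-parallel transport $\tau_\gamma : T_p M \to T_q M$ is a complex-linear isometry that pulls back $(g_q, J_q, T^\n_q, R^\n_q)$ to $(g_p, J_p, T^\n_p, R^\n_p)$. Following the classical Ambrose--Singer argument (or Cartan's method of moving frames applied to the $\fU(m)$-structure defined by $(g,J)$ together with the connection form of $\n$), one constructs a local diffeomorphism $f : U_p \to U_q$ with $f(p) = q$ and $\diff f_p = \tau_\gamma$. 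Since $\tau_\gamma$ preserves $g$ and intertwines $J$, the map $f$ is at once a local isometry and a local pseudo-holomorphism; varying $q$, the pseudogroup $\eP^{J,g}$ is transitive.

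The main obstacle lies in the integration step in the ``if'' direction, where one must turn infinitesimal parallel data into an actual local pseudo-holomorphic isometry. The novelty over the Riemannian Ambrose--Singer theorem is precisely the role of the almost-complex structure: the hypothesis $\n J = 0$ (which is built into the notion of Hermitian connection) is exactly what forces the transvection-type local diffeomorphisms of $(M,g)$ produced by $\n$-parallel transport to commute with $J$, hence to lie in the pseudogroup of pseudo-holomorphic isometries, rather than merely in that of Riemannian isometries.
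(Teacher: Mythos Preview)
The paper does not give its own proof of this statement; it is quoted from \cite{kiricenko, sekigawa}. So the comparison below is with the standard Ambrose--Singer/Tricerri--Vanhecke argument that those references (and the paper's surrounding remarks) implicitly invoke.

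Your ``if'' direction is fine and is exactly the classical transvection argument, with the extra observation that $\nabla J=0$ forces the resulting local affine maps to be pseudo-holomorphic.

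Your ``only if'' direction, however, has a genuine gap in the construction of $\nabla$. Declaring the local frame of Killing fields $\{X_v : v\in\gm^g\}$ to be $\nabla$-parallel produces a \emph{flat} connection: if every $X_v$ is parallel then $R^\nabla(X_v,X_w)X_u=\nabla_{[X_v,X_w]}X_u-[\nabla_{X_v},\nabla_{X_w}]X_u=0$. This already contradicts your own formula $R^\nabla_p(v,w)=\ad([v,w]_{\gkill^{J,g}_0})|_{\gm^g}$, which is the curvature of a \emph{different} connection. Worse, that flat connection is not metric in general: already on a Lie group with left-invariant (but not bi-invariant) $(J,g)$, the Killing fields are the right-invariant fields, and $g(X^R,Y^R)(a)=g_e(\Ad(a^{-1})X,\Ad(a^{-1})Y)$ is not constant, so $\nabla g\neq0$. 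The justification ``because $\gkill^{J,g}_0\subset\gu(\gm^g,I_J,\langle\,,\rangle_g)$'' controls only the isotropy representation at $p$, not the behaviour of the Killing frame away from $p$.

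The connection you actually want is the \emph{canonical connection} attached to the reductive decomposition $\gkill^{J,g}=\gkill^{J,g}_0+\gm^g$: writing each $v\in T_pM$ as the pair $(v,A(v))\in\gm^g$, set $\nabla_v\= D^g_v-A(v)$ at $p$ and transport this definition by the local action of $\eP^{J,g}$. Equivalently, via Theorem~\ref{thm:Sp}, pull back the canonical connection $\widetilde{D}^{\mu}$ of the local quotient $\fG_\mu/\fH_\mu$ (this is the $\widetilde{D}^\mu$ of Section~\ref{Curvmu}). This connection makes every $\eP^{J,g}$-\emph{invariant} tensor parallel (hence $\nabla g=\nabla J=0$), and its torsion and curvature are exactly the formulas you wrote down; but it does \emph{not} make the individual Killing fields parallel. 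Once you replace your construction with this one, the rest of your argument goes through, and it matches the paper's remark (immediately after the theorem) that Hermitian Ambrose--Singer connections correspond bijectively to reductive decompositions of the Hermitian Nomizu algebra.
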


A connection as in the previous theorem is called a {\em Hermitian Ambrose-Singer connection}.
Moreover, by the proof of this theorem, it follows that Hermitian Ambrose-Singer connections are in one-to-one correspondence with the choice of a reductive decomposition for the Hermitian Nomizu algebra, i.e. a choice of a complement $\gm$ for the isotropy algebra $\gkill_0^{J,g}$ inside $\gkill^{J,g}$ that is invariant under the adjoint representation. One of this choice has already been discussed in the previous section.

Second, it is possible to recognize a locally homogeneous almost-Hermitian manifold by means of a finite set of algebraic tensors on a tangent space.
To this purpose, fix $p \in M$ and for any $k\geq0$ set
\begin{equation}
\gj(k) \= \big\{A \in \gu(T_pM,J_p,g_p) : \, A \cdot \big((D^g)^i\Rm(g)_p\big)=0 \,\, \text{ for } \,\, 0 \leq i \leq k \,\,, \,\, A \cdot \big((D^g)^jJ_p\big)=0 \,\, \text{ for } 1 \leq j \leq k+2 \big\} \,\, .
\end{equation}
Since $\big(\gj(k)\big)_{k \in \bZ_{\geq0}}$ is a filtration of the finite dimensional Lie algebra $\gu(T_pM,J_p,g_p)$, there exists a first integer $k_{J,g}$ such that $\gj(k_{J,g})=\gj(k_{J,g}{+}1)$. It is called the {\it Hermitian-Singer invariant of $(M,J,g)$} by \cite{console-nicolodi}.
Notice that, by adapting \cite[Thm 4.1]{tricerri}, whose proof can be found in \cite[proof of Thm 2.1]{nicolodi-tricerri}, and \cite[Prop 4.3]{tricerri}, it is possible to prove that $\gj(k)=\gj(k_{J,g})$ for any $k\geq k_{J,g}$.

\begin{remark}[Open question, 1] \label{rmk:open-1}
In the same spirit of \cite{meusers}, it will be interesting to construct examples of: \begin{itemize}
\item[i)] locally homogeneous almost-Hermitian spaces with arbitrarily high Hermitian Singer invariant;
\item[ii)] pairs of locally homogeneous almost-Hermitian spaces with Hermitian Singer invariant $k$, that are not locally pseudo-holomorphically isometric, which have the same Riemannian curvature up to order $k$ and the same almost-complex structure up to order $k+2$.
\end{itemize}
\end{remark}

For later purposes, for any positive integer $m$ we set
\begin{equation}
\jmath(m) \= \max\{k_{J,g} : \text{$(M,J,g)$ alm. Herm. loc. hom. with \,$\dim_{\bR} M\leq 2m$} \} \label{i(m)} \,\, .
\end{equation}
Notice that $m \mapsto \jmath(m)$ is non-decreasing and $0 \leq \jmath(m) \leq m^2-1$.

For any $m,s \in \bN$ with $s \geq \jmath(m)+2$, we define $\wt{\cX}^s(m)$ to be the set of all the $(2s{+}3)$-tuples
$$\begin{gathered}
(J^1,{\dots},J^{s+2}) \oplus (R^0,R^1,{\dots},R^s) \in E^{\mathsmaller{(1)}}(m,s) \oplus E^{\mathsmaller{(2)}}(m,s) \,\, , \quad \text{ with } \\
E^{\mathsmaller{(1)}}(m,s) \= \bigoplus_{1 \leq k \leq s+2}\Big({\textstyle\bigotimes^k}(\bR^{2m})^* \otimes \so(2m)\Big) \,\, , \quad 
E^{\mathsmaller{(2)}}(m,s) \= \bigoplus_{0 \leq k \leq s}\Big({\textstyle\bigotimes^k}(\bR^{2m})^*\otimes\L^2(\bR^{2m})^*\otimes\so(2m)\Big) \,\, .
\end{gathered}$$
satisfying the subsequent conditions (X1) and (X2). \\[8pt]
(X1) The following eight identities hold:
\begin{align*}
\text{i)}&\,\, \langle R^0(Y_1{\wedge}Y_2)V_1,V_2 \rangle_{\st} = \langle R^0(V_1{\wedge}V_2)Y_1,Y_2 \rangle_{\st} \,\, , 	\\
\text{ii)}&\,\, \gS_{{}_{Y_1,Y_2,V_1}} \langle R^0(Y_1{\wedge}Y_2)V_1,V_2 \rangle_{\st}=0 \,\, , \\
\text{iii)}&\,\, \langle R^1(X_1|Y_1{\wedge}Y_2)V_1,V_2 \rangle_{\st} = \langle R^1(X_1|V_1{\wedge}V_2)Y_1,Y_2 \rangle_{\st} \,\, , \\
\text{iv)}&\,\, \gS_{{}_{Y_1,Y_2,V_1}} \langle R^1(X_1|Y_1{\wedge}Y_2)V_1,V_2\rangle_{\st}=0 \,\, , \\
\text{v)}&\,\, \gS_{{}_{X_1,Y_1,Y_2}} \langle R^1(X_1|Y_1{\wedge}Y_2)V_1,V_2\rangle_{\st}=0 \,\, , \\
\text{vi)}&\,\, R^{k+2}(X_1,X_2,X_3,{\dots}X_{k+2}|Y_1{\wedge}Y_2)-R^{k+2}(X_2,X_1,X_3,{\dots}X_{k+2}|Y_1{\wedge}Y_2) \\
&\hskip 130pt =-\big(R^0(X_1{\wedge}X_2) \cdot R^k\big)(X_3,{\dots}X_{k+2}|Y_1{\wedge}Y_2) \quad \text{ for any $0 \leq k \leq s-2$ } \,\, , \\
\text{vii)}&\,\, J^2(X_1,X_2) - J^2(X_2,X_1) = -R^0(X_1{\wedge}X_2) \cdot I_{\st} \,\, , \\
\text{viii)}&\,\, J^{k+2}(X_1,X_2,X_3,{\dots}X_{k+2}) - J^{k+2}(X_2,X_1,X_3,{\dots}X_{k+2}) \\
&\hskip 130pt =-\big(R^0(X_1{\wedge}X_2) \cdot J^k\big)(X_3,{\dots}X_{k+2}|Y_1{\wedge}Y_2) \quad \text{ for any $1 \leq k \leq s$ } \,\, ,
\end{align*}
where $\so(2m)$ acts on the tensor algebra on $\bR^{2m}$ by derivation. \\[8pt]
(X2) For any $1\leq k\leq s$, the maps
$$\begin{array}{ll}
\a^k(A) \= (A \cdot I_{\st}, A \cdot J^1, {\dots}, A \cdot J^{k+1}) \oplus (A \cdot R^0, A \cdot R^1, {\dots}, A \cdot R^{k-1}) \,\, , & \,\, \text{ with } A \in \so(2m) \,\,, \\[3pt]
\b^k(X) \= (X \lrcorner J^1, {\dots}, X \lrcorner J^{k+2}) \oplus (X \lrcorner R^1, X \lrcorner R^2, {\dots} , X \lrcorner R^k) \,\, , & \,\, \text{ with } X \in \bR^{2m} 
\end{array}$$
verify
$$\begin{array}{ll}
\b^k(\bR^{2m}) \subset \a^{k-1}(\gs\go(2m)) & \,\, \text{ for any } \jmath(m)+2\leq k \leq s \,\, , \\[3pt]
\ker(\a^k) = \ker(\a^{k+1}) & \,\, \text{ for any } \jmath(m) \leq k \leq s-1 \,\, .
\end{array}$$ \vskip 8pt

Notice that $\wt{\cX}^s(m)$ is invariant under the standard left action of $\fU(m)$, and hence

\begin{definition} Let $m,s \in \bN$ with $s \geq \jmath(m)+2$. We call {\it Hermitian $s$-tuples of rank $m$} the elements of the quotient space $\cX^s(m) \= \fU(m) \backslash \widetilde{\cX}^s(m)$. \end{definition}

This definition is motivated by the following result, which is the almost-Hermitian analogue of \cite[Thm 3.1]{nicolodi-tricerri}:

\begin{theorem}[{\cite{console-nicolodi}}] \label{thm:CNcurvmod}
Let $(M^{2m},J,g)$ be a locally homogeneous almost-Hermitian space. Let also $p \in M$ be a point, $u: \bR^{2m} \to T_pM$ a unitary frame and $s \geq \jmath(m)+2$ an integer. Then
$$
\big(u^*\big(D^gJ\big){}_p, {\dots}, u^*\big((D^g)^{s+2}J\big){}_p\big) \oplus \big(u^*\Rm(g)_p, u^*\big(D^g\Rm(g)\big){}_p, {\dots}, u^*\big((D^g)^s\Rm(g)\big){}_p\big)
$$
defines a Hermitian $s$-tuple of rank $m$ which is independent of $p$ and $u$. Conversely, for any Hermitian $s$-tuple $\theta^s \in \cX^s(m)$ of rank $m$, there exists a locally homogeneous almost-Hermitian space $(M^m,J,g)$, uniquely determined up to a local pseudo-holomorphic isometry, such that
$$
\theta^s = \big[\big(u^*\big(D^gJ\big){}_p, {\dots}, u^*\big((D^g)^{s+2}J\big){}_p\big) \oplus \big(u^*\Rm(g)_p, u^*\big(D^g\Rm(g)\big){}_p, {\dots}, u^*\big((D^g)^s\Rm(g)\big){}_p\big)\big]
$$
for some $p \in M$ and $u: \bR^{2m} \to T_pM$ unitary frame.
\end{theorem}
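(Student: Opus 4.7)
The strategy splits into verifying that the tensors extracted from a locally homogeneous space define a well-posed class in $\cX^s(m)$ (direct direction), and integrating an abstract representative in $\widetilde{\cX}^s(m)$ to such a space (converse direction). For the direct direction, the identities (X1) are instances of standard tensor calculus: points (i)--(v) are the algebraic and differential Bianchi identities together with the symmetries of $\Rm(g)$ and $D^g\Rm(g)$; points (vi), (viii) are the Ricci commutation formula
\[
(D^g)^{k+2}_{X_1,X_2,X_3,\ldots} T - (D^g)^{k+2}_{X_2,X_1,X_3,\ldots} T = -\Rm(g)(X_1,X_2) \cdot (D^g)^{k}_{X_3,\ldots} T
\]
applied to $T=\Rm(g)$ and $T=J$ respectively, using that $\Rm(g)$ acts as a derivation; (vii) is the same identity at lowest order. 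Conditions (X2) translate the Hermitian--Singer invariant from Section \ref{sec:h-singer}: reading the intrinsic version of $\a^k$ with $J^j \leftrightarrow (D^g)^jJ$ and $R^i \leftrightarrow (D^g)^i\Rm(g)$, one identifies $\ker(\a^{k+1})$ with the Lie algebra $\gj(k)$, and the stabilization $\gj(k)=\gj(k+1)$ for $k\geq k_{J,g}$, together with $k_{J,g} \leq \jmath(m)$, yields the kernel equality. The inclusion $\b^k(\bR^{2m}) \subset \a^{k-1}(\so(2m))$ encodes the existence of a Hermitian Ambrose--Singer connection, via Lemmas \ref{lemma:DkLg} and \ref{lemma:DkLJ}: infinitesimal translation in direction $v$ acts on the jets of $J$ and $\Rm(g)$ exactly as an appropriate isotropy element. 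Independence of $p$ follows from local homogeneity by parallel transport along local pseudo-holomorphic isometries, and independence of $u$ is built into the $\fU(m)$-quotient.

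For the converse direction, given a representative $\theta^s \in \widetilde{\cX}^s(m)$, I would construct a unitary transitive Lie algebra realizing it along the Nicolodi--Tricerri blueprint. Set $\gm := \bR^{2m}$ endowed with $I_{\st}$ and $\langle\,,\rangle_{\st}$, and $\gh := \ker(\a^{s})$; by (X2) this coincides with $\ker(\a^{\jmath(m)})$, and by $A \cdot I_{\st}=0$ together with $A \in \so(2m)$ it is automatically contained in $\gu(m)$. Equip $\gg := \gh \oplus \gm$ with a Lie bracket whose components are the $\so(2m)$-bracket on $\gh\times\gh$, the natural action $[A,X]:=A(X)$ on $\gh\times\gm$, and on $\gm\times\gm$ a bracket whose $\gh$-valued part is $-R^0(X_1\wedge X_2)$ (which lies in $\gh$ thanks to (X2) and the symmetries (X1.i)--(X1.ii)) and whose $\gm$-valued part is determined by the $1$-jets $J^1, R^1$ in such a way as to realize the prescribed canonical torsion. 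The Jacobi identity is then checked component-wise: the $(\gh,\gh,\gh)$ and $(\gh,\gh,\gm)$ pieces are automatic, the $(\gh,\gm,\gm)$ piece uses (X1.vi) at $k=0$ together with (X1.viii) at $k=1$, and the $(\gm,\gm,\gm)$ piece uses the algebraic Bianchi (X1.ii) and the stabilization (X2). Minimality of $\gh$ as the kernel of $\a^s$ guarantees that $\gh$ contains no non-trivial ideal of $\gg$.

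Let now $\fG$ be the simply-connected Lie group with $\Lie(\fG)=\gg$, and $\fH$ the connected Lie subgroup with $\Lie(\fH)=\gh$. The local quotient $M := \fG/\fH$ inherits a $\fG$-invariant almost-Hermitian structure $(J,g)$ via the identification $\gm \cong T_{e\fH}M$, and a canonical Hermitian connection $\n$ of Ambrose--Singer type with $\n T^{\n}=\n \Omega^{\n}=0$. The central step, which I expect to be the main obstacle, is to prove by induction on $k$ that
\[
u^*\big((D^g)^{k} J\big)_{e\fH} = J^{k} \quad (1 \leq k \leq s+2), \qquad u^*\big((D^g)^{k} \Rm(g)\big)_{e\fH} = R^{k} \quad (0 \leq k \leq s).
\]
This is achieved by writing $D^g = \n + \G$ for the Nomizu tensor $\G$ reconstructed from $R^0$ and $J^1$ via \eqref{eq:tors-cont}, and iterating \eqref{eq:diffcurvs} together with the $J$-analogue derived from Lemma \ref{lemma:DkLJ}; conditions (X1.vi)--(X1.viii) ensure compatibility at each step, while condition (X2) guarantees closure of the recursion past order $\jmath(m)+2$, since beyond that level no new isotropy element is required to absorb the corrections. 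Local pseudo-holomorphic isometric uniqueness follows because any two such realizations yield isomorphic Hermitian Nomizu algebras, and Proposition \ref{prop:killing-nomizu} promotes this algebraic isomorphism to a local pseudo-holomorphic isometry.
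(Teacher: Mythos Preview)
The paper does not supply its own proof of this theorem: it is stated with attribution to Console--Nicolodi \cite{console-nicolodi} and used as a black box, so there is no in-paper argument to compare against. Your proposal follows exactly the Nicolodi--Tricerri blueprint \cite{nicolodi-tricerri} that Console--Nicolodi adapt to the almost-Hermitian setting, so in spirit it is the ``right'' proof.

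That said, a couple of points in your sketch deserve tightening. First, in the converse direction you set $\gh := \ker(\a^s)$ and define the $\gm\times\gm \to \gh$ component of the bracket to be $-R^0(X_1\wedge X_2)$; but $R^0(X_1\wedge X_2)$ lands in $\so(2m)$, not a priori in $\ker(\a^s)$, so one actually works with the intermediate algebra $\so(2m)\oplus\gm$ (or projects modulo $\ker(\a^s)^\perp$ after checking compatibility), as in \cite[Sect.~2]{nicolodi-tricerri}. Second, the heart of the argument---the inductive identification $u^*((D^g)^kJ)=J^k$ and $u^*((D^g)^k\Rm(g))=R^k$---is not a formal consequence of (X1) and (X2) alone: one must show that the higher jets are \emph{determined} by the lower ones once $k\geq \jmath(m)+2$, which is where the stabilization $\ker(\a^k)=\ker(\a^{k+1})$ is actually used (to invert $\a^{k-1}$ on the image of $\b^k$ and propagate). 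Your phrase ``closure of the recursion'' gestures at this but does not explain why the recursion is well-posed; that is the genuine content of the Console--Nicolodi argument.
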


\section{Almost-Hermitian geometric models} \label{sec:geom-models} 
\setcounter{equation} 0

\subsection{The class of almost-Hermitian geometric models} \hfill \par

In this section, following \cite{bohm-lafuente-simon, pediconi-annsns}, we introduce a special class of locally homogeneous almost-Hermitian spaces, namely

\begin{definition} \label{def:GM}
A {\it $2m$-dimensional almost-Hermitian geometric model} is a locally homogeneous almost-Hermitian distance ball $(\eB, \hat{J}, \hat{g})=(\eB_{\hat{g}}(o,\pi), \hat{J}, \hat{g})$ of radius $\pi$, dimension $\dim_{\bR}\eB=2m$, with bounded sectional curvature $|\sec(\hat{g})| \leq 1$ and injectivity radius at the center $o \in \eB$ equal to $\inj_o(\eB, \hat{g})= \pi$.
\end{definition}

From now on, up to pulling back the metric via the Riemannian exponential map $\Exp(\hat{g})_o$, any almost-Hermitian geometric model will be always assumed to be of the form $(B^{2m},\hat{J},\hat{g})$, where $B^{2m} \= B_{\st}(0,\pi) \subset \bR^{2m}$ is the $2m$-dimensional Euclidean ball of radius $\pi$, the standard coordinates of $B^{2m}$ will be always assumed to be normal for $\hat{g}$ at $0$ and $\hat{J}|_0 = I_{\st}$.
In particular, the geodesics starting from $0 \in B^{2m}$ are precisely the straight lines and the Riemannian distance from the center equals $\td_{\hat{g}}(0,x)=|x|_{\st}$ for any $x \in B^{2m}$. Hence, $\eB_{\hat{g}}(0,r)=B_{\st}(0,r)$ for any $0<r \leq \pi$. \smallskip

For latter purposes, we prove that local pseudo-holomorphic isometries can be extended in the following way:

\begin{lemma} \label{lemma_unifext1}
Let $(B^{2m}, \hat{J}_1, \hat{g}_1)$ and $(B^{2m}, \hat{J}_2, \hat{g}_2)$ be two almost-Hermitian geometric models and ssume that there exists $0<\e<\pi$ and a pointed pseudo-holomorphic isometry $f: (B_{\st}(0,\e), \hat{J}_1, \hat{g}_1) \to (B_{\st}(0,\e), \hat{J}_2, \hat{g}_2)$. Then, $f$ extends analytically to a pointed pseudo-holomorphic isometry $\tilde{f}: (B^{2m}, \hat{J}_1, \hat{g}_1) \to (B^{2m}, \hat{J}_2, \hat{g}_2)$.
\end{lemma}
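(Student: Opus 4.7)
The strategy is to extend $f$ explicitly by a linear-in-normal-coordinates formula and then invoke real-analyticity. First, I recall that by Lemma \ref{lemma:analyticity} both $\hat{g}_i$ and $\hat{J}_i$ are real-analytic tensor fields. Moreover, in the standard coordinates on $B^{2m}$, which by assumption are $\hat{g}_i$-normal at the origin, the Riemannian exponential map $\Exp(\hat{g}_i)_0$ is the identity; since it is also a real-analytic diffeomorphism on $\{|x|_{\st}<\pi\le\inj_0(B^{2m},\hat{g}_i)\}$, all the relevant tensors have real-analytic components on $B^{2m}$. Being a local pseudo-holomorphic isometry between such real-analytic structures, $f$ is itself real-analytic on $B_{\st}(0,\e)$.

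Next, I set $L\=\diff f_0:\bR^{2m}\to\bR^{2m}$. Since $f(0)=0$, since both coordinate systems are normal at $0$ (so $\hat{g}_i|_0=\langle\,,\rangle_{\st}$), and since $\hat{J}_i|_0=I_{\st}$, the identities $f^{*}\hat{g}_2=\hat{g}_1$ and $f^{*}\hat{J}_2=\hat{J}_1$ evaluated at $0$ yield $L\in\fO(2m)$ and $L\circ I_{\st}=I_{\st}\circ L$, hence $L\in\fU(m)$. In particular $L$ preserves the Euclidean norm and thus the ball $B^{2m}=B_{\st}(0,\pi)$. I then define the candidate extension
\begin{equation*}
\tilde{f}:B^{2m}\to B^{2m},\qquad \tilde{f}(x)\=L(x),
\end{equation*}
which is globally real-analytic, fixes $0$, and satisfies $\diff\tilde{f}_0=L=\diff f_0$.

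The third step is to verify that $\tilde{f}=f$ on the small ball $B_{\st}(0,\e)$. Since the standard coordinates are $\hat{g}_i$-normal at $0$, the radial $\hat{g}_i$-geodesics from $0$ are the straight lines $t\mapsto tv$. Because $f$ is a pointed local isometry, it maps the $\hat{g}_1$-geodesic with initial velocity $v$ to the $\hat{g}_2$-geodesic with initial velocity $L(v)$, giving $f(tv)=tL(v)=L(tv)=\tilde{f}(tv)$ for $|tv|_{\st}<\e$. Consequently the real-analytic tensor fields $\tilde{f}^{\,*}\hat{g}_2-\hat{g}_1$ and $\tilde{f}^{\,*}\hat{J}_2-\hat{J}_1$ on the connected set $B^{2m}$ vanish on the open subset $B_{\st}(0,\e)$, hence identically on $B^{2m}$ by the identity principle for real-analytic tensor fields. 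Therefore $\tilde{f}$ is a pointed pseudo-holomorphic isometry extending $f$.

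The only genuine subtlety I expect is the justification of the analytic-continuation step in the possibly non-integrable almost-Hermitian setting: it must be the case that \emph{both} $\hat{g}_i$ and $\hat{J}_i$ are real-analytic in the normal coordinates, so that the pullbacks under the (polynomial) map $\tilde{f}$ are real-analytic and the identity principle applies. This is precisely what Lemma \ref{lemma:analyticity} secures. The remaining points --- that $L$ is unitary, that $L$ preserves $B^{2m}$, and that $f$ and $\tilde{f}$ agree along radial geodesics out of the basepoint --- are immediate consequences of the normal-coordinate normalization of almost-Hermitian geometric models.
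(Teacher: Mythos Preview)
Your proof is correct and follows essentially the same approach as the paper: defining $\tilde f$ as $\Exp(\hat g_2)_0\circ\diff f|_0\circ\Exp(\hat g_1)_0^{-1}$, which in the normalized coordinates is exactly your linear map $L$, and then invoking real-analyticity (Lemma~\ref{lemma:analyticity}) to propagate the isometry and pseudo-holomorphic conditions from $B_{\st}(0,\e)$ to the whole ball. The paper phrases the analytic-continuation step for $\hat J$ via the scalar function $h(x)=\big|\diff\tilde f(x)\circ \hat J_1(x)\circ(\diff\tilde f(x))^{-1}-\hat J_2(\tilde f(x))\big|_{\hat g_2}^2$, whereas you argue directly with the tensor $\tilde f^{\,*}\hat J_2-\hat J_1$; and you make explicit (which the paper leaves implicit) that $L\in\fU(m)$, hence preserves $B^{2m}$---but these are cosmetic differences, not substantive ones.
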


\begin{proof}
Let us define the map
$$
\tilde{f}: B^{2m} \to B^{2m} \,\, , \quad \tilde{f} \= \Exp(\hat{g}_2)_{0} \circ \diff f |_{0} \circ \Exp(\hat{g}_1)_{0}^{-1} \,\, .
$$
Then, by construction, it follows that $\tilde{f}$ is real-analytic diffeomorphism satisfying $\tilde{f}(x)=f(x)$ for any $x \in B_{\st}(0,\e)$ and $\tilde{f}^*\hat{g}_2 = \hat{g}_1$. We now consider the function
$$
h : B^{2m} \to \bR \,\, , \quad h(x) \= \big|\diff \tilde{f}(x) \circ J_1(x) \circ (\diff \tilde{f}(x))^{-1} - J_2(\tilde{f}(x))\big|_{\hat{g}_2}^{2}
$$
and we observe that it is real-analytic. Moreover $h(x)=0$ for any $x \in B_{\st}(0,\e)$ and so it follows that $h(x)=0$ for any $x \in B^{2m}$. This completes the proof.
\end{proof}

Following \cite[Lemma 1.3 and Lemma 1.4]{bohm-lafuente-simon} and the proof of Lemma \ref{lemma_unifext1}, one can also prove the following

\begin{lemma} \label{lemma_unifext2}
Let $(B^{2m}, \hat{J}, \hat{g})$ be an almost-Hermitian geometric model. Then
\begin{equation} \label{eq_inj}
\inj_x(B^{2m}, \hat{g})= \pi -|x|_{\st} \quad \text{ for any } x \in B^{2m} \,\, .
\end{equation}
Moreover, fix $x, y \in B^{2m}$ and set $r_{x,y} \= \pi - \max\{|x|_{\st},|y|_{\st}\}$. Then, any pointed pseudo-holomorphic iso\-me\-try $f: (\eB_{\hat{g}}(x,\e),\hat{J}, \hat{g}) \to (\eB_{\hat{g}}(y,\e),\hat{J}, \hat{g})$ can be uniquely extended to a pointed pseudo-holomorphic isometry $\tilde{f}: (\eB_{\hat{g}}(x,r_{x,y}),\hat{J}, \hat{g}) \to (\eB_{\hat{g}}(y,r_{x,y}),\hat{J}, \hat{g})$.
\end{lemma}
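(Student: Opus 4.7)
The plan is to first establish the injectivity radius formula \eqref{eq_inj}, and then to deduce the extension statement by transporting the construction used in the proof of Lemma~\ref{lemma_unifext1}, with the radii of the involved tangent balls now controlled by \eqref{eq_inj}.

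For the upper bound $\inj_x(B^{2m}, \hat g) \leq \pi - |x|_{\st}$, I would use that the straight line from $0$ through $x$ is a $\hat g$-geodesic (the standard coordinates being normal for $\hat g$ at $0$), and that it exits $B^{2m}$ precisely at Euclidean distance $\pi$ from the origin; hence past $x$ it can be continued for $\hat g$-length only $\pi - |x|_{\st}$ before leaving $B^{2m}$, so $\Exp(\hat g)_x$ cannot be defined on any tangent ball strictly larger than $B_{\st}(0, \pi - |x|_{\st})$. For the lower bound I would invoke local homogeneity together with the theory recalled in Section~\ref{sec:locally-homog}: via the Hermitian Nomizu algebra of $(B^{2m}, \hat J, \hat g)$ at $0$, a neighborhood of any point is pseudo-holomorphically identified with an open set of the simply connected (local) almost-Hermitian space $\fG/\fH$, on which the (local) transitive isometric action propagates the equality $\inj_0(B^{2m}, \hat g) = \pi$ to $\inj_z = \pi$ at every point $z$. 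Then, for $x \in B^{2m}$ with $|x|_{\st} = \rho$ and any $y$ with $\td_{\hat g}(x,y) < \pi - \rho$, the triangle inequality gives $\td_{\hat g}(0,y) < \pi$, so $y \in B^{2m}$; thus the $(\pi-\rho)$-Riemannian normal ball around $x$ is entirely contained in $B^{2m}$ and $\Exp(\hat g)_x$ is a diffeomorphism on it, yielding $\inj_x(B^{2m}, \hat g) \geq \pi - \rho$. This is the almost-Hermitian analogue of the Riemannian argument of \cite{bohm-lafuente-simon}.

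For the extension statement, once \eqref{eq_inj} is established, I would set
$$ \tilde f \= \Exp(\hat g)_y \circ \diff f|_x \circ \Exp(\hat g)_x^{-1} \,\, . $$
By \eqref{eq_inj} and the definition of $r_{x,y}$, both $\Exp(\hat g)_x$ and $\Exp(\hat g)_y$ are diffeomorphisms on $B_{\st}(0, r_{x,y})$ in the respective tangent spaces, so $\tilde f$ is a well-defined real-analytic diffeomorphism from $\eB_{\hat g}(x, r_{x,y})$ onto $\eB_{\hat g}(y, r_{x,y})$. Since $\diff f|_x$ is a $(\hat J, \hat g)$-unitary linear isomorphism and the exponential maps are radial isometries, $\tilde f^* \hat g = \hat g$ follows. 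That $\tilde f$ extends $f$ is the identity $f(\Exp(\hat g)_x(v)) = \Exp(\hat g)_y(\diff f|_x v)$ for small $v$, which holds since $f$ is a Riemannian isometry. To show pseudo-holomorphicity of $\tilde f$ on all of $\eB_{\hat g}(x, r_{x,y})$, I would invoke verbatim the real-analytic trick at the end of the proof of Lemma~\ref{lemma_unifext1}: the real-analytic function
$$ z \,\mapsto\, \big|\diff \tilde f(z) \circ \hat J(z) \circ (\diff \tilde f(z))^{-1} - \hat J(\tilde f(z))\big|^2_{\hat g} $$
vanishes on the nonempty open set $\eB_{\hat g}(x, \e)$, hence identically. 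Uniqueness of the extension is the standard analytic continuation principle for real-analytic isometries.

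The main obstacle is the lower bound on the injectivity radius: without the local homogeneity assumption the formula $\inj_x = \pi - |x|_{\st}$ can easily fail (e.g.\ on an Euclidean ball with a radial slit removed). Making the ``propagation of $\inj = \pi$'' argument rigorous requires the machinery of Section~\ref{sec:locally-homog}, in particular the realization of a locally homogeneous almost-Hermitian space as a (local) quotient $\fG/\fH$ where the pseudogroup of pseudo-holomorphic isometries acts transitively; this is also where Lemma~\ref{lemma:analyticity}, ensuring real-analyticity of both $\hat J$ and $\hat g$, becomes indispensable for the analytic-continuation step.
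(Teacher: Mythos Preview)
Your approach matches the paper's, which simply defers to \cite[Lemmas 1.3 and 1.4]{bohm-lafuente-simon} for the injectivity-radius formula and to the proof of Lemma~\ref{lemma_unifext1} for the extension via $\Exp(\hat g)_y \circ \diff f|_x \circ \Exp(\hat g)_x^{-1}$ together with the analytic-continuation trick for $\hat J$.

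One step in your sketch is incorrectly justified, however. The sentence ``Since $\diff f|_x$ is a $(\hat J,\hat g)$-unitary linear isomorphism and the exponential maps are radial isometries, $\tilde f^*\hat g=\hat g$ follows'' does not hold: radial isometry of the exponential map (i.e.\ the Gauss lemma) says nothing about the tangential components of the metric, and in a non--constant-curvature space the map $\Exp(\hat g)_y\circ L\circ\Exp(\hat g)_x^{-1}$ with $L$ a linear isometry is in general \emph{not} a Riemannian isometry. The correct argument is the one you already invoke for $\hat J$: since $f$ is an isometry it intertwines the exponential maps, so $\tilde f=f$ on $\eB_{\hat g}(x,\e)$, hence $\tilde f^*\hat g=\hat g$ there; both sides being real-analytic (Lemma~\ref{lemma:analyticity}), the equality extends to all of $\eB_{\hat g}(x,r_{x,y})$. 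This is precisely how the metric is handled in the proof of Lemma~\ref{lemma_unifext1}, and with this fix your argument goes through.
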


One of the main properties of the class of almost-Hermitian geometric models is the fact that they give rise to a good parametrization for the moduli space of locally homogeneous almost-Hermitian spaces up to local pseudo-holomorphic isometries. More precisely, the following existence result holds true.

\begin{theorem} \label{thm:existence}
Let $(M^{2m},J,g)$ be a locally homogeneous almost-Hermitian space with $|\sec(g)|\leq1$. Then, there exists a $2m$-dimensional almost-Hermitian geometric model $(B^{2m}, \hat{J}, \hat{g})$ that is locally pseudo-holomorphically isometric to $(M^{2m},J,g)$. The almost-Hermitian geometric model is unique up to pseudo-holomorphic isometry.
\end{theorem}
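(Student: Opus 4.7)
The plan is to construct the geometric model by pulling back $g$ and $J$ via the Riemannian exponential map at an arbitrary point $p\in M$, and then to verify that the four defining properties of Definition \ref{def:GM} are satisfied. Uniqueness is a direct consequence of the extension lemma already established.

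\textbf{Step 1 (Construction).} Fix any $p\in M$ and a unitary frame $u: \bR^{2m}\to T_pM$ with $u^*J_p=I_{\st}$ and $u^*g_p=\langle\,,\rangle_{\st}$. Since $|\sec(g)|\leq 1$, the upper bound $\sec(g)\leq 1$ combined with Rauch's comparison theorem implies that the exponential map $\Exp(g)_p$ has no conjugate points in the open ball of radius $\pi$ in $T_pM$. Consequently the composition
\[
F \= \Exp(g)_p \circ u : B^{2m}=B_{\st}(0,\pi) \longrightarrow M
\]
is a real-analytic local diffeomorphism (by Lemma \ref{lemma:analyticity}, both $g$ and $J$ are real-analytic). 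Define
\[
\hat{g} \= F^*g \,\, , \qquad \hat{J} \= F^*J
\]
on $B^{2m}$. Since $F$ is a local isometry and intertwines $J$ with $\hat{J}$, the pair $(B^{2m},\hat{J},\hat{g})$ is locally pseudo-holomorphically isometric to $(M,J,g)$ via $F$.

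\textbf{Step 2 (Verification of the model properties).} By the choice of $u$, one has $\hat{J}|_0 = I_{\st}$. Since $F$ is a local isometry, the sectional curvatures transfer, giving $|\sec(\hat{g})|\leq 1$. By the very definition of $F$, the standard coordinates on $B^{2m}$ are normal coordinates for $\hat{g}$ at the origin, so the geodesics of $\hat{g}$ emanating from $0$ are the straight Euclidean rays and $\td_{\hat{g}}(0,x)=|x|_{\st}$; in particular $\Exp(\hat{g})_0$ is just the identity on $B^{2m}$, so no two geodesics from $0$ meet inside $B^{2m}$, hence $\inj_0(B^{2m},\hat{g})=\pi$. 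Finally, to check local homogeneity of $(B^{2m},\hat{J},\hat{g})$, let $x,y\in B^{2m}$. By local homogeneity of $(M,J,g)$, there exists a local pseudo-holomorphic isometry $\f: U_{F(x)}\to U_{F(y)}$ with $\f(F(x))=F(y)$. Since $F$ is a local diffeomorphism, one can shrink $U_{F(x)}, U_{F(y)}$ so that $F$ admits analytic inverses $F_x^{-1}, F_y^{-1}$ on these neighborhoods, and then
\[
\tilde{\f} \= F_y^{-1} \circ \f \circ F : F^{-1}(U_{F(x)}) \longrightarrow F^{-1}(U_{F(y)})
\]
is a local pseudo-holomorphic isometry of $(B^{2m},\hat{J},\hat{g})$ sending $x$ to $y$.

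\textbf{Step 3 (Uniqueness).} Suppose $(B^{2m},\hat{J}_1,\hat{g}_1)$ and $(B^{2m},\hat{J}_2,\hat{g}_2)$ are two geometric models, each equipped with a pointed local pseudo-holomorphic isometry into $(M,J,g)$ sending $0$ to the common point $p$. Composing, and restricting to a common radius $0<\e<\pi$ on which both restricted maps are diffeomorphisms onto the same neighborhood of $p$, one obtains a pointed pseudo-holomorphic isometry
\[
f : (B_{\st}(0,\e),\hat{J}_1,\hat{g}_1) \longrightarrow (B_{\st}(0,\e),\hat{J}_2,\hat{g}_2) \,\, .
\]
Lemma \ref{lemma_unifext1} then extends $f$ analytically to a pointed pseudo-holomorphic isometry $\tilde{f}: (B^{2m},\hat{J}_1,\hat{g}_1)\to(B^{2m},\hat{J}_2,\hat{g}_2)$, proving uniqueness up to pseudo-holomorphic isometry.

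The main conceptual obstacle is the local homogeneity transfer in Step 2: one has to be careful that $F$ need not be injective globally, so the lifted transformations $\tilde\f$ only exist on sufficiently small neighborhoods where local inverses of $F$ are well defined. This is exactly why the statement only asks for a \emph{pseudogroup} of local pseudo-holomorphic isometries to act transitively. The rest of the argument is standard once the correct comparison input ($\sec(g)\leq 1$ rules out conjugate points in $B(0,\pi)$) is in place, and the extension lemma handles uniqueness without additional analytic work.
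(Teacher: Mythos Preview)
Your construction in Step~1 has a genuine gap: you implicitly assume that the Riemannian exponential map $\Exp(g)_p$ is defined on the entire ball of radius $\pi$ in $T_pM$. Nothing in the hypotheses guarantees this. The theorem does not assume $(M,g)$ is complete, and locally homogeneous spaces need not be: for instance, take $M$ to be a small Euclidean ball in $\bC^m$ with the flat metric and standard complex structure. This is a locally homogeneous almost-Hermitian space with $|\sec(g)|=0$, yet geodesics from any interior point exit $M$ long before reaching length $\pi$, so your map $F=\Exp(g)_p\circ u$ is simply undefined on most of $B^{2m}$. Rauch comparison only tells you that \emph{where} the exponential map is defined it has no conjugate points; it says nothing about the domain of definition.

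This is precisely the difficulty that the paper sidesteps by invoking \cite[Theorem~A]{pediconi-annsns}, which produces the Riemannian geometric model $(B^{2m},\hat g)$ abstractly (essentially from the Nomizu algebra data rather than by pulling back along geodesics of $M$), together with a local isometry $\phi$ defined only on a small ball $B_{\st}(0,\e)$. The almost-complex structure $\hat J$ is then defined on this small ball via $\phi$ and \emph{extended} to all of $B^{2m}$ using the local homogeneity of $(B^{2m},\hat g)$ already established by the cited result. Your argument would become correct if you first replaced $(M,J,g)$ by a locally pseudo-holomorphically isometric model on which geodesics from the basepoint run for time at least $\pi$, but constructing such a model is exactly the non-trivial content you are trying to bypass. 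Your uniqueness argument via Lemma~\ref{lemma_unifext1} is fine (the reduction to a common basepoint $p$ is immediate from local homogeneity of $M$); the paper additionally routes through Theorem~\ref{thm:CNcurvmod}, but your more direct approach works once existence is in hand.
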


\begin{proof}
Fix a point $p\in M$. By \cite[Theorem A]{pediconi-annsns}, there exists a $2m$-dimensional, smooth, locally homogeneous Riemannian distance ball $(B^{2m}, \hat{g})$ with $|\sec(\hat{g})| \leq 1$ and $\inj_0(B^{2m}, \hat{g})= \pi$, together with a smooth diffeomorphism
$$
\phi: B_{\st}(0,\e) \subset B^{2m} \to \eU \subset M
$$
verifying $\phi(0)=p$ and $\phi^*g = \hat{g}$. Then, it is easy to check that $\hat{J} \= (\diff \phi)^{-1} \circ J \circ \diff \phi$ can be extended to the whole ball $B^{2m}$ and gives rise to an almost-Hermitian geometric model $(B^{2m}, \hat{J}, \hat{g})$. Finally, the uniqueness follows from Theorem \ref{thm:CNcurvmod} and Lemma \ref{lemma_unifext1}.
\end{proof}

\subsection{Cheeger-Gromov convergence of almost-Hermitian geometric models} \hfill \par

In the Riemannian setting, geometric models are introduced to provide a right framework to study convergence in the Cheeger-Gromov topology even without a lower bound on the injectivity radius.
Indeed, it is well-known by \cite{cheeger-gromov-1, cheeger-gromov-2} that there exist families of Riemannian manifolds that collapse with bounded curvature.
The idea of studying limits of such families in the Cheeger-Gromov topology was originally conceived in the seminal works by \cite{glickenstein, lott}, where the notion of {\em Riemannian groupoids} is used.
Remarkably, when restricting to Riemannian homogeneous spaces, this construction reduces to consider geometric models.

Firstly, we give the following definition of convergence, which generalizes the usual notion of pointed convergence for complete Riemannian manifolds (see e.g. \cite{petersen}) to the case of incomplete almost-Hermitian manifolds. In the following, the Banach spaces $\cC^{k,\a}(\ol{B})$ are defined following \cite[p. 52]{gilbarg-trudinger} for any bounded ball $B \subset \bR^{2m}$.

\begin{definition} \label{defpointed}
A sequence $(B^{2m},\hat{J}^{(n)},\hat{g}^{(n)})$ of $2m$-dimensional almost-Hermitian geometric models is said to {\it converge in the pointed $\cC^{k,\a}$-topology} to a $2m$-dimensional almost-Hermitian geometric model $(B^{2m},\hat{J}^{(\infty)},\hat{g}^{(\infty)})$ if, for any $0<\d < \pi$, there exists a sequence of $\cC^{k+1,\a}$-embeddings $\phi_{\d}^{(n)}: B_{\st}(0,\pi-\d) \to B^{2m}$ such that $\phi_{\d}^{(n)}(0)=0$ for any $n \in \bN$ and
$$
\Big\|\big(\phi_{\d}^{(n)*}\hat{g}^{(n)}\big)_{ij} -\big(\hat{g}^{(\infty)}\big)_{ij}\Big\|_{\cC^{k,\a}(\overline{B_{\st}(0,\pi-\d)})} \to 0 \,\, , \quad
\Big\|\big((\diff \phi_{\d}^{(n)})^{-1} \circ \hat{J}^{(n)} \circ (\diff \phi_{\d}^{(n)})\big)^i_j -\big(\hat{J}^{(\infty)}\big)^i_j\Big\|_{\cC^{k,\a}(\overline{B_{\st}(0,\pi-\d)})} \to 0
$$
as $n \to +\infty$, for any $1 \leq i, j \leq 2m$.
\end{definition}

Then we observe that, by means of \cite[Corollary 3.8]{pediconi-annsns} and Proposition \ref{prop:stime}, the following convergence result holds true in the set of all the almost-Hermitian geometric models.

\begin{theorem} \label{thm:conv}
Let $(B^{2m},\hat{J}^{(n)},\hat{g}^{(n)})$ be a sequence of $2m$-dimensional almost-Hermitian geometric models and assume that there exist an integer $k \geq 0$, a parameter $t \in \bR$ and a constant $K>0$ such that, for any $n\in\bN$,
$$
\sum_{i=0}^{k} \big| (\n^{(n)\,t})^i \W^t(\hat{J}^{(n)},\hat{g}^{(n)}) \big|_{\hat{g}^{(n)}} +\sum_{j=0}^{k+1} \big| (\n^{(n)\,t})^j T^t(\hat{J}^{(n)},\hat{g}^{(n)}) \big|_{\hat{g}^{(n)}} < K \,\, .
$$
Then, $(B^{2m},\hat{J}^{(n)},\hat{g}^{(n)})$ subconverges to a limit $2m$-dimensional almost-Hermitian geometric model $(B^{2m},\hat{J}^{(\infty)},\hat{g}^{(\infty)})$ in the pointed $\cC^{k+1,\a}$-topology, for any $0<\a<1$.
\end{theorem}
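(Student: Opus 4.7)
The plan is to translate the Gauduchon-theoretic hypotheses into classical Riemannian bounds via Proposition \ref{prop:stime}(i), invoke the pointed $\cC^{k+1,\a}$-convergence result for Riemannian geometric models from \cite[Corollary 3.8]{pediconi-annsns}, and then upgrade the convergence by also controlling the almost-complex structures along the same sequence of embeddings.

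Concretely, Proposition \ref{prop:stime}(i) produces a constant $C_{1}=C_{1}(m,k,t,K)>0$ such that, for every $n\in\bN$,
\begin{equation*}
\sum_{i=0}^{k} \big|(D^{\hat{g}^{(n)}})^{i}\Rm(\hat{g}^{(n)})\big|_{\hat{g}^{(n)}} + \sum_{j=1}^{k+2} \big|(D^{\hat{g}^{(n)}})^{j}\hat{J}^{(n)}\big|_{\hat{g}^{(n)}} < C_{1}
\end{equation*}
holds at every point of $B^{2m}$ (the pointwise bound being constant by local homogeneity). The sequence $(B^{2m},\hat{g}^{(n)})$ is thus a sequence of Riemannian geometric models with uniform bounds on $(D^{\hat{g}^{(n)}})^{i}\Rm(\hat{g}^{(n)})$ for $0\le i\le k$, so by \cite[Corollary 3.8]{pediconi-annsns} a subsequence converges in the pointed $\cC^{k+1,\a}$-topology to a Riemannian geometric model $(B^{2m},\hat{g}^{(\infty)})$: for every $0<\d<\pi$ one has $\cC^{k+2,\a}$-embeddings $\phi^{(n)}_{\d}:B_{\st}(0,\pi-\d)\to B^{2m}$ with $\phi^{(n)}_{\d}(0)=0$ and $(\phi^{(n)}_{\d})^{*}\hat{g}^{(n)}\to \hat{g}^{(\infty)}$ in $\cC^{k+1,\a}(\ol{B_{\st}(0,\pi-\d)})$.

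Define now $\hat{J}^{(n)}_{\d}\=(\diff\phi^{(n)}_{\d})^{-1}\circ \hat{J}^{(n)}\circ \diff\phi^{(n)}_{\d}$. The uniform $\cC^{k+1,\a}$-bound on the pulled-back metrics controls the associated Christoffel symbols up to order $k$, so the covariant bounds on $(D^{\hat{g}^{(n)}})^{j}\hat{J}^{(n)}$ for $1\le j\le k+2$ translate into uniform $\cC^{k+2}$-bounds on the components of $\hat{J}^{(n)}_{\d}$ in the Euclidean coordinates on $B_{\st}(0,\pi-\d)$. Arzelà--Ascoli then extracts a further subsequence converging in $\cC^{k+1,\a}$ to a limit endomorphism field $\hat{J}^{(\infty)}_{\d}$ which, by continuity, satisfies $(\hat{J}^{(\infty)}_{\d})^{2}=-\Id$ and is $\hat{g}^{(\infty)}$-compatible. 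A diagonal argument as $\d\searrow 0$, together with the uniqueness of extension granted by Lemma \ref{lemma_unifext1}, assembles these pieces into a single almost-complex structure $\hat{J}^{(\infty)}$ defined on all of $B^{2m}$ with $\hat{J}^{(\infty)}|_{0}=I_{\st}$.

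The main remaining obstacle is to verify that $(B^{2m},\hat{J}^{(\infty)},\hat{g}^{(\infty)})$ is locally homogeneous in the almost-Hermitian sense, namely, that the pseudogroup of local $\hat{g}^{(\infty)}$-isometries preserves $\hat{J}^{(\infty)}$. One clean route is via the Hermitian $s$-tuple framework of Theorem \ref{thm:CNcurvmod}: fix $s\ge \jmath(m)+2$; the Hermitian $s$-tuples attached to $(\hat{J}^{(n)},\hat{g}^{(n)})$ at $0$ form a bounded sequence in $\wt{\cX}^{s}(m)$ by the first step, so, up to a $\fU(m)$-action, they subconverge to a Hermitian $s$-tuple whose realization furnished by Theorem \ref{thm:CNcurvmod} is locally pseudo-holomorphically isometric to $(B^{2m},\hat{J}^{(\infty)},\hat{g}^{(\infty)})$, in view of Theorem \ref{thm:existence} and Lemma \ref{lemma_unifext1}. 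This identifies the limit as the desired almost-Hermitian geometric model.
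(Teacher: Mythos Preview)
Your first two steps---translating the Gauduchon bounds via Proposition \ref{prop:stime}(i), invoking \cite[Corollary 3.8]{pediconi-annsns} for the Riemannian models, and extracting the almost-complex structures by Arzel\`a--Ascoli plus a diagonal argument---track the paper's proof closely. The divergence, and the gap, is in your final step.

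Your Hermitian $s$-tuple route requires $s\geq\jmath(m)+2$, but the bounds from the first step control only $(D^{\hat g^{(n)}})^{i}\Rm(\hat g^{(n)})$ for $i\leq k$ and $(D^{\hat g^{(n)}})^{j}\hat J^{(n)}$ for $j\leq k+2$. Hence the $s$-tuples are uniformly bounded only when $k\geq s\geq \jmath(m)+2$, whereas the theorem is stated for every $k\geq 0$. Even granting large $k$, two further issues remain: the set $\widetilde{\cX}^{s}(m)$ is not obviously closed in $E^{(1)}(m,s)\oplus E^{(2)}(m,s)$ (condition (X2) involves kernel equalities and image inclusions, which are not closed), so the limit tuple need not lie in it; and your identification of the realized model with $(B^{2m},\hat J^{(\infty)},\hat g^{(\infty)})$ appeals to Theorem \ref{thm:existence} and Lemma \ref{lemma_unifext1}, both of which presuppose the limit is a locally homogeneous almost-Hermitian space---precisely what you are trying to establish.

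The paper avoids these problems by proving local homogeneity and smoothness of $\hat J^{(\infty)}$ directly. Using Lemma \ref{lemma_unifext2} and mimicking \cite[Theorem 2.6]{bohm-lafuente-simon}, limits of local pseudo-holomorphic isometries of the sequence furnish a locally compact, effective, transitive local topological group acting by pseudo-holomorphic isometries on the limit; the local Myers--Steenrod theorem \cite[Theorem A]{pediconi-blms} then upgrades this to a local Lie group action, which forces $\hat J^{(\infty)}$ to be smooth (indeed real-analytic) and the limit to be a genuine almost-Hermitian geometric model. This argument works uniformly in $k\geq 0$.
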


\begin{proof}
By Proposition \ref{prop:stime}, there exists a constant $C=C(m,k,t,K)$ such that, for any $n\in\bN$,
\begin{equation}\label{eq:stima-proof}
\sum_{i=0}^{k} \big| \big((D^{\hat{g}^{(n)}})^i \Rm(\hat{g}^{(n)}) \big) \big|_{\hat{g}^{(n)}} < C \,\,, \qquad \sum_{j=1}^{k+2} \big| \big((D^{\hat{g}^{(n)}})^j \hat{J}^{(n)} \big) \big|_{\hat{g}^{(n)}} < C \,\, .
\end{equation}
By the first inequality in \eqref{eq:stima-proof} and \cite[Corollary 3.8]{pediconi-annsns}, up to pass to a subsequence, we can assume that the sequence of Riemannian distance balls $(B^{2m},\hat{g}^{(n)})$ converges to a $2m$-dimensional, smooth, locally homogeneous Riemannian distance ball $(B^{2m}, \hat{g}^{(\infty)})$ in the pointed $\cC^{k+1,\a}$-topology, for any $0<\a<1$, with $|\sec(\hat{g}^{(\infty)})| \leq 1$ and $\inj_0(B^{2m}, \hat{g}^{(\infty)})= \pi$. In other words, we can fix any $0<\d<\pi$ and find a sequence of $\cC^{k+2,\a}$-embeddings $\phi_{\d}^{(n)}: B_{\st}(0,\pi-\d) \to B^{2m}$ such that $\phi_{\d}^{(n)}(0)=0$ and
\begin{equation}\label{eq:chefatica}
\Big\|\big(\phi_{\d}^{(n)*}\hat{g}^{(n)}\big)_{ij} -\big(\hat{g}^{(\infty)}\big)_{ij}\Big\|_{\cC^{k+1,\a}(\overline{B_{\st}(0,\pi-\d)})} \to 0 \quad \text{ as } n \to +\infty \,\, , \,\, \text{ for any } 1\leq i, j \leq 2m \,\,.
\end{equation}
Actually, the intertwining embeddings $\phi_{\d}^{(n)}$ can be assumed to be smooth.
Indeed, we can approximate each $\phi_{\d}^{(n)}$ with a smooth map $\tilde\phi_{\d}^{(n)}: B_{\st}(0,\pi-\d) \to B^{2m}$, satisfying $\tilde\phi_{\d}^{(n)}(0)=0$, in the $\cC^{k+2,\a}$-norm, i.e.
$$
\max_{1 \leq \ell \leq 2m} \Big\| \big(\tilde{\phi}_{\d}^{(n)} - \phi_{\d}^{(n)}\big)^{\ell} \Big\|_{\cC^{k+2,\a}(\overline{B_{\st}(0,\pi-\d)})} \leq \e^{(n)}
$$
for some constant $\e^{(n)}>0$. Notice that the condition of being embedding is open, see e.g. \cite[Ch 2, Thm 1.4]{hirsch}, and so, up to take $\e^{(n)}$ small enough, the map $\tilde\phi_{\d}^{(n)}$ is an embedding as well. Moreover, a direct computation shows that there exists a constant $C>0$, that does not depend on $n$, such that
$$
\Big\|\big(\tilde{\phi}_{\d}^{(n)*}\hat{g}^{(n)}\big)_{ij} -\big(\hat{g}^{(\infty)}\big)_{ij}\Big\|_{\cC^{k+1,\a}(\overline{B_{\st}(0,\pi-\d)})} \leq C
\Big(\e^{(n)} + \Big\|\big(\phi_{\d}^{(n)*}\hat{g}^{(n)}\big)_{ij} -\big(\hat{g}^{(\infty)}\big)_{ij}\Big\|_{\cC^{k+1,\a}(\overline{B_{\st}(0,\pi-\d)})}\Big)
$$
for any $1 \leq i, j \leq 2m$. Therefore, letting $\e^{(n)} \to 0$, this shows that \eqref{eq:chefatica} holds also with $\tilde\phi_{\d}^{(n)}$ in place of $\phi_{\d}^{(n)}$.

By \eqref{eq:stima-proof}, the tensors $(\diff \phi_{\d}^{(n)})^{-1} \circ \hat{J}^{(n)} \circ (\diff \phi_{\d}^{(n)})$ are uniformly bounded in the $\cC^{k+2}$-norm on the compact set $\overline{B_{\st}(0,\pi-\d)}$. Then, by the Ascoli-Arzel\`a Theorem, up to pass to a subsequence, there exists a $(1,1)$-tensor field $\hat{J}^{(\infty)}$ on $\overline{B_{\st}(0,\pi-\d)}$ of class $\cC^{k+1,\a}$ such that
$$
\Big\|\big((\diff \phi_{\d}^{(n)})^{-1} \circ \hat{J}^{(n)} \circ (\diff \phi_{\d}^{(n)})\big)^i_j -\big(\hat{J}^{(\infty)}\big)^i_j\Big\|_{\cC^{k+1,\a}(\overline{B_{\st}(0,\pi-\d)})} \to 0 \quad \text{ as } n \to +\infty \,\, , \,\, \text{ for any } 1\leq i, j \leq 2m \,\,,
$$
(see e.g. the proof of \cite[Corollary 3.15]{ricci-flow}). By letting $\d \to 0^+$ and using a Cantor diagonal argument, we obtain a well defined limit tensor field $\hat{J}^{(\infty)}$ on the whole ball $B^{2m}$.
Since $(\hat{g}^{(n)}, \hat{J}^{(n)})$ is an almost-Hermitian structure on $B^{2m}$ for any $n \in \bN$, it follows that $(\hat{g}^{(\infty)}, \hat{J}^{(\infty)})$ is an almost-Hermitian structure on $B^{2m}$. In virtue of Lemma \ref{lemma_unifext2}, one can mimic the proof of \cite[Theorem 2.6]{bohm-lafuente-simon} and show that $(B^{2m},\hat{g}^{(\infty)}, \hat{J}^{(\infty)})$ is a locally homogeneous almost-Hermitian space. Finally, in order to prove that the tensor $\hat{J}^{(\infty)}$ is smooth, and hence real-analytic, one can proceed as in the proof of \cite[Theorem B]{pediconi-annsns} by two steps. Firstly, in virtue of Lemma \ref{lemma_unifext2}, one constructs a locally compact and effective local topological group of pseudo-holomorphic isometries acting transitively on $(B^{2m},\hat{g}^{(\infty)}, \hat{J}^{(\infty)})$ around the origin. Then, by the local Myers-Steenrod Theorem \cite[Theorem A]{pediconi-blms}, this turns out to be a transitive local Lie group of pseudo-holomorphic isometries and hence the thesis follows.
\end{proof}

\begin{remark}[Open question, 2] \label{rmk:open-2}
Let us stress that, in contrast with the Riemannian case, this is not a compactness theorem (compare with \cite[Theorem B]{pediconi-annsns}). In fact, even though the limit space is real-analytic, we do not have control on the top order covariant derivative of the limit almost-complex structure, even for $k=0$.
We ask whether it is possible to refine Definition \ref{def:GM} and get new estimates in order to obtain a compactness result.
\end{remark}

\medskip
\section{The space of locally homogeneous almost-Hermitian spaces} \label{sec:space-locally-homog}
\setcounter{equation} 0

\subsection{A parametrization for locally homogeneous almost-Hermitian spaces} \hfill \par

For any $m,q \in \bZ$ with $m\geq1$ and $0\leq q \leq m^2$, we indicate with $\eH^{\rm loc, \rm alm\text{-}\bC}_{q,m}$ the moduli space of unitary transitive Lie algebras of rank $(q,m)$ up to isomorphisms and we indicate with $\eH^{\rm alm\text{-}\bC}_{q,m}$ the subset of moduli space of regular ones. Similarly, with $\eH^{\rm loc, \bC}_{q,m}$ (resp. $\eH^{\bC}_{q,m}$) denotes the subset of integrable unitary transitive Lie algebras (resp. the regular ones). We fix a decomposition $\bR^{q+2m}=\bR^q\oplus\bR^{2m}$ and the corresponding diagonal embedding of $\fGL(q) \times \fU(m)$ into $\fGL(q{+}2m)$. Accordingly, we denote by $\mathrm{pr}_{\bR^{2m}} : \bR^{q+2m} \to \bR^{2m}$ the induced natural projection onto the second factor. We define
$$
\cW_{q,m} \= \big(\fGL(q) \times \fU(m)\big) \big\backslash \big(\L^2(\bR^{q+2m})^*\otimes \bR^{q+2m}\big) \,\, ,
$$ where $\fGL(q) \times \fU(m)$ acts on $\L^2(\bR^{q+2m})^*\otimes \bR^{q+2m}$ on the left by change of basis. Following \cite{lauret-jlms}, one can prove that the map
$$
\Psi_{q,m}: \eH^{\rm loc, \rm alm\text{-}\bC}_{q,m} \to \cW_{q,m} \,\, , \quad (\gg=\gh+\gm,I,\langle\,,\rangle) \mapsto \mu \= u^*\big([\cdot,\cdot]_{\gg}\big) \,\, ,
$$
where $u: \bR^{q+2m} \to \gg$ is any adapted linear frame for $(\gg=\gh+\gm,I,\langle\,,\rangle)$, is well defined, injective and that its image contains precisely the elements $\mu \in \cW_{q,m}$ which verify the following conditions:
\begin{itemize}[leftmargin=30pt]
\item[(h1)] $\mu$ satisfies the Jacobi condition and $\mu(\bR^q,\bR^q) \subset \bR^q$, $\mu(\bR^q,\bR^{2m}) \subset \bR^{2m}$;
\item[(h2)] $\langle\mu(Z,X),Y\rangle_{\st}=\langle X,\mu(Z,Y)\rangle_{\st}$ for any $X,Y \in \bR^{2m}$, $Z \in \bR^q$;
\item[(h3)] $\mu(Z,I_{\st}X) = I_{\st}\mu(Z,X)$ for any $X \in \bR^{2m}$, $Z \in \bR^q$;
\item[(h4)] $\big\{Z\in \bR^q : \mu(Z,\bR^{2m})=\{0\}\big\}=\{0\}$.
\end{itemize}
The image of $\eH^{\rm loc, \bC}_{q,m}$ is characterized by the further condition
\begin{itemize}[leftmargin=30pt]
\item[(h5)] $\mathrm{pr}_{\bR^{2m}} \big(\mu(I_{\st}X,I_{\st}Y)-\mu(X,Y)\big) = I_{\st} \mathrm{pr}_{\bR^{2m}} \big(\mu(I_{\st}X,Y)+\mu(X,I_{\st}Y)\big)$ for any $X,Y \in \bR^{2m}$.
\end{itemize}

\begin{remark} \label{rmk:alm-eff}
We point out that, while conditions (h1), (h2), (h3), (h5) are closed, condition (h4) is open. However, following \cite{lauret-jlms}, the following fact holds: for any element $\tilde{\mu} \in \cW_{q,m} \setminus \eH^{\rm loc, \rm alm\text{-}\bC}_{q,m}$ satisfying conditions (h1), (h2) and (h3), there exist a unique integer $0 \leq q' < q$ and a decomposition $\bR^q= \bR^{q-q'}\oplus\bR^{q'}$ such that $\bR^{q-q'}=\big\{Z\in \bR^q : \mu(Z,\bR^{2m})=\{0\}\big\}$ and
$$
(\tilde{\mu})_{|q',m} \= {\rm{pr}}_{\bR^{q'+2m}} \circ (\tilde{\mu}|_{\bR^{q'+2m}\times\bR^{q'+2m}}) \in \eH^{\rm loc, \rm alm\text{-}\bC}_{q',m} \,\, ,
$$
where $\bR^{q'+2m}=\bR^{q'} \oplus \bR^{2m}$ and ${\rm{pr}}_{\bR^{q'+2m}}: \bR^{q+2m} \to \bR^{q'+2m}$ is the projection with respect to the decomposition $\bR^{q+2m}=\bR^{q-q'} \oplus \bR^{q'+2m}$. \end{remark}

From now on, we identify $\eH^{\rm loc, \rm alm\text{-}\bC}_{q,m}$ with its image through $\Psi_{q,m}$ and, for any $\mu \in \eH^{\rm loc, \rm alm\text{-}\bC}_{q,m} \simeq \Psi_{q,m}(\eH^{\rm loc, \rm alm\text{-}\bC}_{q,m})$, we set $$\gg_{\mu}\=(\bR^{q+2m},\mu) \,\, , \quad \gh_{\mu}\=(\bR^{q},\mu|_{\bR^{q}\times\bR^{q}})$$ so that $(\gg_{\mu}=\gh_{\mu}+\bR^{2m},I_{\st},\langle\,,\rangle_{\st})$ is the unitary transitive Lie algebra uniquely associated to the bracket $\mu$. We also set
\begin{equation} \label{unionq}
\eH^{\rm loc, \rm alm\text{-}\bC}_{m} \= \bigcup_{q=0}^{m^2} \eH^{\rm loc, \rm alm\text{-}\bC}_{q,m} \,\, , \quad \eH^{\rm alm\text{-}\bC}_{m} \= \bigcup_{q=0}^{m^2} \eH^{\rm alm\text{-}\bC}_{q,m} \,\, , \quad
\eH^{\rm loc, \bC}_{m} \= \bigcup_{q=0}^{m^2} \eH^{\rm loc, \bC}_{q,m} \,\, , \quad \eH^{\bC}_{m} \= \bigcup_{q=0}^{m^2} \eH^{\bC}_{q,m} \,\, .
\end{equation}

The set $\eH^{\rm loc, \rm alm\text{-}\bC}_{q,m}$ parametrizes the moduli space of the equivalence classes of $m$-dimensional locally homogeneous almost-Hermitian spaces, up to local equivariant pseudo-holomorphic isometries, in the following way.

\begin{theorem} \label{thm:Sp}
For any unitary transitive Lie algebra $\mu \in \eH^{\rm loc, \rm alm\text{-}\bC}_m$, there exist a pointed locally homogeneous almost-Hermitian space $((M,J,g),p)$ and an injective homomorphism $\f: \gg_{\mu} \to \gkill^{J,g}$ such that 
$$
\f(\gh_{\mu}) \subset \gkill^{J,g}_0 \,\, , \quad \f(\bR^{2m}) = \gm^g \,\, , \quad \f|_{\bR^{2m}} \circ I_{\st} \circ (\f|_{\bR^{2m}}){}^{-1} = I_J \,\, , \quad ((\f|_{\bR^{2m}}){}^{-1})^*\langle\,,\rangle_{\st} = \langle\,,\rangle_{g} \,\, ,
$$
where $(\gkill^{J,g}=\gkill^{J,g}_0+\gm^g,I_J,\langle\,,\rangle_g)$ is the Hermitian Nomizu algebra of $(M,J,g)$ at $p$, as in \eqref{eq:dec-nomizu}. The space $(M,J,g)$ is uniquely determined up to a local equivariant pseudo-holomorphic isometry.
Moreover, $J$ is integrable if and only if $\mu \in \eH^{\rm loc, \bC}_m$, and $(M,J,g)$ is locally equivariantly pseudo-holomorphically isometric to a globally homogeneous almost-Hermitian space if and only if $\mu$ is regular.
\end{theorem}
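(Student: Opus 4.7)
The plan is to construct a locally homogeneous almost-Hermitian space $(M,J,g)$ from $\mu$ by passing to a (possibly local) Lie group quotient, to identify its Hermitian Nomizu algebra with $\gg_{\mu}$, and to conclude uniqueness via Theorem \ref{thm:CNcurvmod}. First I would take $\fG$ to be the simply connected Lie group with $\Lie(\fG)=\gg_{\mu}$ and $\fH\subset\fG$ the connected Lie subgroup with $\Lie(\fH)=\gh_{\mu}$. Because $\ad(\gh_{\mu})$ preserves both $I_{\st}$ and $\langle\,,\rangle_{\st}$ on $\bR^{2m}=\gm$, the isotropy representation $\Ad(\fH)|_{\gm}$ takes values in $\fU(m)$. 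In the regular case, $\fH$ is Malcev-closed, so $M\=\fG/\fH$ is a smooth manifold; set $p\=[e]$, transport $(I_{\st},\langle\,,\rangle_{\st})$ to $T_pM\cong\gm$ to obtain $J_p,g_p$, and extend by left $\fG$-translation to a $\fG$-invariant almost-Hermitian structure on $M$. In the non-regular case I would replace the global quotient by a local transverse slice $\fS\subset\fG$ with $T_e\fS=\gm$, take a neighborhood of $e$ in $\fS$ as $M$, and define $(J,g)$ using the local left action of $\fG$ around the identity.

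Next I would show that $\gg_{\mu}$ embeds in the Hermitian Nomizu algebra $\gkill^{J,g}$ at $p$. For each $X\in\gg_{\mu}$, the one-parameter subgroup $t\mapsto\exp(tX)$ acts on $M$ by pseudo-holomorphic isometries (globally if regular, locally otherwise) and hence produces a real holomorphic Killing vector field $X^{*}$ in a neighborhood of $p$. By Lemma \ref{lemma:lie-bracket}, the map
$$
\f:\gg_{\mu}\longrightarrow\gkill^{J,g},\qquad X\longmapsto\big(X^{*}_p,(A_{X^{*}})_p\big)
$$
is a Lie algebra homomorphism. Its kernel is an ideal of $\gg_{\mu}$ contained in $\gh_{\mu}$, hence trivial by hypothesis, so $\f$ is injective; by construction $\f(\gh_{\mu})\subset\gkill^{J,g}_0$ and $\f|_{\bR^{2m}}:\bR^{2m}\to\gm^{g}$ is a linear isomorphism intertwining $(I_{\st},\langle\,,\rangle_{\st})$ with $(I_J,\langle\,,\rangle_g)$.

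For the uniqueness clause, given a second pointed space $((M',J',g'),p')$ equipped with an analogous embedding $\f':\gg_{\mu}\hookrightarrow\gkill^{J',g'}$, the algebraic data of $\mu$ reconstruct, through the canonical Hermitian Ambrose-Singer connection associated with the reductive decomposition $\gg_{\mu}=\gh_{\mu}+\bR^{2m}$, the full sequence of covariant derivatives $(D^g)^j J$ and $(D^g)^i\Rm(g)$ at $p$ in any adapted frame. Hence the Hermitian $s$-tuples of rank $m$ attached to $(M,J,g,p)$ and to $(M',J',g',p')$ coincide, and the converse direction of Theorem \ref{thm:CNcurvmod} produces the desired local equivariant pseudo-holomorphic isometry. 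The integrability characterization reduces to writing the Nijenhuis tensor of $J$ at $p$ in the adapted frame: condition (h5) is precisely $N_{J_p}=0$, and $\fG$-invariance then propagates $N_J\equiv 0$ to a neighborhood. The global-homogeneity equivalence is automatic from the construction, since $M$ is the genuine quotient manifold $\fG/\fH$ exactly when $\fH$ is Malcev-closed in $\fG$, i.e.\ $\mu$ is regular.

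The delicate point I expect is the non-regular case of the construction: one must define $(J,g)$ on a local slice $\fS$ and verify that every element of $\fG$ (not only of $\fH$) acts locally by pseudo-holomorphic isometries, despite the fact that the closure $\overline{\fH}$ of $\fH$ in $\fG$ may be strictly larger than $\fH$. The key observation will be that the additional one-parameter subgroups in $\overline{\fH}\setminus\fH$ still preserve $I_{\st}$ and $\langle\,,\rangle_{\st}$ on $\gm$ at the infinitesimal level, since the $\ad$-invariance of these tensors is a closed condition, so their local action on $\fS$ still consists of pseudo-holomorphic isometries and the construction goes through. With this technical point settled, all remaining verifications reduce to routine linear algebra on $\gg_{\mu}$ combined with appeals to Proposition \ref{prop:killing-nomizu} and Theorem \ref{thm:CNcurvmod}.
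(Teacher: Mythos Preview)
Your approach is essentially the same as the paper's: construct $(M,J,g)$ as the local quotient $\fG_\mu/\fH_\mu$ of the simply connected group $\fG_\mu$ by the connected subgroup $\fH_\mu$, push the unitary structure on $\bR^{2m}\simeq T_pM$ to a $\fG_\mu$-invariant almost-Hermitian structure, and let $\f$ be induced by the infinitesimal action. The paper's proof is only a brief sketch that defers to \cite{spiro} for the Riemannian analogue and to \cite{pediconi-blms} for the local-quotient formalism; you supply more detail than the paper does, particularly on the non-regular case (local transverse slice) and on the injectivity of $\f$ via the effectiveness condition~(h4).

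One step in your argument needs more care. You assert that ``by construction $\f|_{\bR^{2m}}:\bR^{2m}\to\gm^{g}$ is a linear isomorphism'', i.e.\ that $\f(\bR^{2m})=\gm^g$. Recall, however, that $\gm^g$ is defined as the $\langle\!\langle\,,\rangle\!\rangle_g$-orthogonal complement of the \emph{full} isotropy $\gkill_0^{J,g}$ inside the \emph{full} Nomizu algebra $\gkill^{J,g}$, and the latter may strictly contain $\f(\gg_\mu)$ whenever the space has extra local symmetries not coming from $\gg_\mu$. Both $\f(\bR^{2m})$ and $\gm^g$ are $2m$-dimensional complements of $\gkill_0^{J,g}$ projecting isomorphically onto $T_pM$, but they need not coincide: the equality amounts to $\tr\big((A_{X^*})_p\,B\big)=0$ for every $X\in\bR^{2m}$ and every $B\in\gkill_0^{J,g}$, which is not immediate from the action-vector-field construction. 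The paper's sketch does not make this point explicit either (it is presumably absorbed into the cited references), but in your write-up it should be argued rather than declared.
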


\begin{proof}
The analogue statement in the category of locally homogeneous Riemannian spaces follows from \cite[Lemma 3.5 and Prop 4.4]{spiro}. Here, we just sketch the construction of the pointed locally homogeneous almost-Hermitian space associated to an element $\mu \in \eH^{\rm loc,\rm alm\text{-}\bC}_m$. Let $\fG_{\mu}$ be the unique simply connected Lie group with $\Lie(\fG_{\mu})=\gg_{\mu}$ and $\fH_{\mu} \subset \fG_{\mu}$ the connected Lie subgroup with $\Lie(\fH_{\mu})=\gh_{\mu}$, which is closed in $\fG_{\mu}$ if and only if $\mu$ is regular. Then one can consider the {\it local quotient} of Lie groups ${\fG}_{\mu}/{\fH}_{\mu}$, which admits a unique suitable real-analytic manifold structure (see e.g. \cite[Sect 6]{pediconi-blms}). Moreover, by means of the standard local action of ${\fG}_{\mu}$ on ${\fG}_{\mu}/{\fH}_{\mu}$, one can construct a uniquely determined invariant almost-Hermitian structure $(J_{\mu},g_{\mu})$ on ${\fG}_{\mu}/{\fH}_{\mu}$ such that $(\bR^{2m},I_{\st},\langle\,,\rangle_{\st}) \simeq (T_{e_{\mu}{\fH}_{\mu}}{\fG}_{\mu}/{\fH}_{\mu},J_{\mu}|_{e_{\mu}{\fH}_{\mu}},g_{\mu}|_{e_{\mu}{\fH}_{\mu}})$.
\end{proof}

\subsection{Gauduchon connections of locally homogeneous almost-Hermitian spaces} \label{Curvmu} \hfill \par

For any $\mu \in \eH^{\rm loc, \rm alm\text{-}\bC}_{q,m}$, we will refer to all the geometric data of $(\fG_{\mu}/\fH_{\mu}, J_{\mu}, g_{\mu})$ by writing $\mu$, for example, $D^\mu$ will denote the Levi-Civita connection and $\widetilde{D}^{\mu}$ will denote its Hermitian Ambrose-Singer connection, uniquely determined by the fixed reductive decomposition $\gg_\mu=\gh_\mu+\bR^{2m}$. Moreover, we consider the orthogonal decomposition
\begin{equation} \label{eq:decmu}
\mu = (\mu|_{\gh_{\mu} \wedge \gg_{\mu}})+\mu_{\gh_{\mu}}+\mu_{\bR^{2m}} \,\, , \quad \text{ where } \quad \mu_{\gh_{\mu}}: \bR^{2m} \wedge \bR^{2m} \to \gh_{\mu} \,\, , \quad \mu_{\bR^{2m}}: \bR^{2m} \wedge \bR^{2m} \to \bR^{2m} \,\, ,
\end{equation}
with respect to the the $\ad(\gh_{\mu})$-invariant product $\langle\,,\rangle'_{\mu}$ on $\gg_{\mu}$ introduced in Remark \ref{rem:Sp}. We denote by $F^{\mu} \in \L^3(\bR^{2m})^*$ the three-form corresponding to $\diff^{\,c}\w_{\mu}$, which is given by
$$
F^{\mu}(X,Y,Z) = -\langle \mu_{\bR^{2m}}(I_{\st}X,I_{\st}Y), Z \rangle_{\st} -\langle \mu_{\bR^{2m}}(I_{\st}Y,I_{\st}Z), X \rangle_{\st} -\langle \mu_{\bR^{2m}}(I_{\st}Z,I_{\st}X), Y \rangle_{\st} \,\, .
$$
We also denote by $N^\mu$ the Nijenhuis tensor, which is given by
$$
N^\mu(X,Y) = -\mu_{\bR^{2m}}(I_{\st}X,I_{\st}Y) +\mu_{\bR^{2m}}(X,Y) +I_{\st}\mu_{\bR^{2m}}(I_{\st}X,Y) +I_{\st}\mu_{\bR^{2m}}(X,I_{\st}Y) \,\, .
$$
According to \eqref{eq:decomposition}, we consider the decomposition $F^\mu=(F^{\mu})^++(F^\mu)^-$.
By \cite[Eqns (1.2.1) and (2.2.4)]{gauduchon-bumi}, we have
$$\begin{aligned}
(F^\mu)^-(X,Y,Z) &= \langle N^\mu(X,Y), Z \rangle_{\st} + \langle N^\mu(Y,Z), X \rangle_{\st} + \langle N^\mu(Z,X), Y \rangle_{\st} \,\,,\\
(F^\mu)^+(X,Y,Z) &= -\langle \mu_{\bR^{2m}}(X,Y), Z \rangle_{\st}-\langle \mu_{\bR^{2m}}(Y,Z), X \rangle_{\st}-\langle \mu_{\bR^{2m}}(Z,X), Y \rangle_{\st} \\
&\quad +\langle \mu_{\bR^{2m}}(I_{\st}X,Y), I_{\st}Z \rangle_{\st}+\langle \mu_{\bR^{2m}}(I_{\st}Y,Z), I_{\st}X \rangle_{\st}+\langle \mu_{\bR^{2m}}(I_{\st}Z,X), I_{\st}Y \rangle_{\st}\\
&\quad +\langle \mu_{\bR^{2m}}(X,I_{\st}Y), I_{\st}Z \rangle_{\st}+\langle \mu_{\bR^{2m}}(Y,I_{\st}Z), I_{\st}X \rangle_{\st}+\langle \mu_{\bR^{2m}}(Z,I_{\st}X), I_{\st}Y \rangle_{\st}\,\,.
\end{aligned}$$

Fix a parameter $t\in\bR$ and look at the Gauduchon connection $\n^{t,\mu}$. Let us consider now the $(1,2)$-tensors
$$
S^{\mu} \= \widetilde{D}^{\mu} -D^{\mu} \,\, , \quad A^{t,{\mu}}\= \widetilde{D}^{\mu} -\n^{t,{\mu}} \,\,,
$$
which can be identified with linear maps
$$
S^{\mu}: \bR^{2m} \to \so(2m) \,\, , \quad A^{t,{\mu}}: \bR^{2m} \to \gu(m) \,\,.
$$
For the operator $S^\mu$, by \cite[Ch X, Thm 3.3]{kobayashi-nomizu-2}), we have
\begin{equation} \label{eq:defS}
\langle S^{\mu}(X)Y,Z\rangle_{\st} = -\tfrac12\langle\mu_{\bR^{2m}}(X,Y),Z\rangle_{\st} -\tfrac12\langle\mu_{\bR^{2m}}(Z,X),Y\rangle_{\st} -\tfrac12\langle\mu_{\bR^{2m}}(Z,Y),X\rangle_{\st} \,\, .
\end{equation}
For the operator $A^{t,\mu}$, by \eqref{eq:gauduchon-almost}, we have
\begin{multline*}
\langle A^{t,\mu}(X)Y,Z \rangle_{\st} = \langle S^\mu(X)Y,Z\rangle_{\st} +\tfrac{t+1}4(F^\mu)^+(X,I_{\st} Y,I_{\st} Z) +\tfrac{t-1}4(F^\mu)^+(X,Y,Z) \\
+\tfrac14 \langle N^\mu(Y,Z), X \rangle_{\st} +\tfrac12(F^\mu)^-(X,Y,Z) \,\, .
\end{multline*}
Then, by \cite[Thm 2.3, Ch X]{kobayashi-nomizu-2}, the Riemannian curvature is explicitly given by
\begin{equation} \label{eq:Riemcurvature}
\Rm(\mu)(X,Y) = \ad_{\mu}\big(\mu_{\gh_{\mu}}(X,Y)\big)\vert_{\bR^{2m}} -[S^{\mu}(X), S^{\mu}(Y)] -S^{\mu}(\mu_{\bR^{2m}}(X,Y)) \,\, , \\
\end{equation}
and, analogously, the $t$-Gauduchon curvature and torsion are given by
\begin{equation} \begin{gathered} \label{eq:t-curtor}
T^t(\mu)(X,Y) = A^{t,{\mu}}(X)Y -A^{t,{\mu}}(Y)X -\mu_{\bR^{2m}}(X,Y) \,\, , \\
\W^t(\mu)(X,Y) = \ad_{\mu}\big(\mu_{\gh_{\mu}}(X,Y)\big)\vert_{\bR^{2m}} -[A^{t,{\mu}}(X), A^{t,{\mu}}(Y)] -A^{t,{\mu}}(\mu_{\bR^{2m}}(X,Y)) \,\, . \\
\end{gathered} \end{equation}

Moreover, we recall that any $\fG_\mu$-invariant tensor field $Q$ on $\fG_\mu / \fH_\mu$ is parallel with respect to $\tilde D^\mu$, see e.g. the proof of \cite[Prop 2.7, Ch X]{kobayashi-nomizu-2}.
Therefore, for the covariant derivatives $D^\mu Q$ and $\n^{t,\mu}Q$, we have
\begin{equation} \label{eq:covder}
X\lrcorner D^\mu Q = -S^\mu(X) \cdot Q \,\,,
\qquad
X\lrcorner \n^{t,\mu} Q = -A^{t,\mu}(X) \cdot Q \,\,.
\end{equation}

\subsection{A potpourri of topologies in the moduli space}\label{sec:potpourri} \hfill \par

We are going to introduce some topologies on the moduli space $\eH^{\rm loc, \rm alm\text{-}\bC}_{m}$. The first one is the so-called {\it algebraic convergence}, that is

\begin{definition} \label{def:algconv}
A sequence $(\mu^{(n)}) \subset \eH^{\rm loc, \rm alm\text{-}\bC}_{q,m}$ is said to {\it converge algebraically to $\mu^{(\infty)} \in \eH^{\rm loc, \rm alm\text{-}\bC}_{m}$} if one of the following conditions is satisfied:
\begin{itemize}
\item[i)] $\mu^{(\infty)} \in \eH^{\rm loc, \rm alm\text{-}\bC}_{q,m}$ and $\mu^{(n)} \to \mu^{(\infty)}$ in the standard topology induced by $\cW_{q,m}$;
\item[ii)] $\mu^{(\infty)} \in \eH^{\rm loc, \rm alm\text{-}\bC}_{q',m}$ for some $0\leq q' <q$ and there exists $\tilde{\mu}^{(\infty)} \in \cW_{q,m} \setminus \eH^{\rm loc, \rm alm\text{-}\bC}_{q,m}$ such that $\mu^{(n)} \to \tilde{\mu}^{(\infty)}$ in the standard topology of $\cW_{q,m}$ and $(\tilde{\mu}^{(\infty)})_{|q',m}=\mu^{(\infty)}$ as in Remark \ref{rmk:alm-eff}.
\end{itemize}
\end{definition}

For the second notion of convergence, we notice that Theorem \ref{thm:CNcurvmod} and Theorem \ref{thm:Sp} give rise to a well defined map 
$$\eH^{\rm loc, \rm alm\text{-}\bC}_{m} \to \cX^s(m) \,\, , \quad \mu \mapsto \theta^s(\mu)$$
that assigns to any $\mu \in \eH^{\rm loc, \rm alm\text{-}\bC}_{m}$ the corresponding Hermitian $s$-tuples $\theta^s(\mu)$ of $(\fG_{\mu}/\fH_{\mu}, J_{\mu}, g_{\mu})$, for any $s \geq \jmath(m)+2$ (see Subsection \ref{sec:h-singer}). Let us notice that this map is surjective but not injective. In fact, it holds that $\theta^s(\mu_1)=\theta^s(\mu_2)$ for some, and hence for any, $s \geq \jmath(m)+2$ if and only if $\gkill(\mu_1)=\gkill(\mu_2)$. Then, the so-called {\it infinitesimal convergence} is defined as follows.

\begin{definition} \label{def:infconv}
A sequence $(\mu^{(n)}) \subset \eH^{\rm loc, \rm alm\text{-}\bC}_{m}$ is said to {\it converge $s$-infinitesimally to $\mu^{(\infty)} \in \eH^{\rm loc, \rm alm\text{-}\bC}_{m}$}, for some $s \geq \jmath(m){+}2$, if $\theta^s(\mu^{(n)}) \to \theta^s(\mu^{(\infty)})$ as $n \to +\infty$ in the standard topology of $\cX^s(m)$. If $(\mu^{(n)})$ converges $s$-infinitesimally to $\mu^{(\infty)}$ for any $s \geq \jmath(m)+2$, then $(\mu^{(n)})$ is said to {\it converge infinitesimally to $\mu^{(\infty)}$}. 
\end{definition}

By the previous observation, uniqueness of limit has to be intended in the following way: if a sequence $(\mu^{(n)}) \subset \eH^{\rm loc, \rm alm\text{-}\bC}_{m}$ converges $s_1$-infinitesimally to $\mu^{(\infty)}_1$ and $s_2$-infinitesimally to $\mu^{(\infty)}_2$ for some integers $s_2 \geq s_1 \geq \jmath(m)+2$, then $\gkill(\mu^{(\infty)}_1)=\gkill(\mu^{(\infty)}_2)$. We also mention that our notion of infinitesimal convergence is equivalent to the original notion introduced by Lauret in \cite[Sect 6]{lauret-jlms} and \cite[Sect 3.4]{lauret-rend}. Moreover, since the infinitesimal convergence involves only the germs on the almost-Hermitian structures at the origin, it turns out that it is weaker than the algebraic convergence topology, i.e.

\begin{prop} \label{prop:Lau}
Let $q,m \in \bZ$ with $m \geq 1$ and $0\leq q \leq m^2$. If $(\mu^{(n)}) \subset \eH^{\rm loc, \rm alm\text{-}\bC}_{m}$ converges algebraically to $\mu^{(\infty)} \in \eH^{\rm loc, \rm alm\text{-}\bC}_{m}$, then $(\mu^{(n)})$ converges infinitesimally to $\mu^{(\infty)}$.
\end{prop}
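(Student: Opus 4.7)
The plan is to show that each component of the Hermitian $s$-tuple $\theta^s(\mu)$ depends polynomially on the structure constants of $\mu$. Once this is established, the conclusion follows by continuity: in case (i) of Definition \ref{def:algconv} directly, and in case (ii) after checking that the relevant polynomials take the same value on $\tilde{\mu}^{(\infty)}$ and on its effective reduction $\mu^{(\infty)}$.

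First I would realize the geometric model $(\fG_\mu/\fH_\mu, J_\mu, g_\mu)$ with its canonical reductive connection $\widetilde{D}^\mu$, identify $T_{p_\mu}(\fG_\mu/\fH_\mu) \simeq \bR^{2m}$ via an adapted frame, and record the key fact that every $\fG_\mu$-invariant tensor field is $\widetilde{D}^\mu$-parallel. In particular this applies to $J_\mu$, $g_\mu$, $\Rm(g_\mu)$, and to the difference tensor $S^\mu$. Using $D^{g_\mu} = \widetilde{D}^\mu - S^\mu$ together with \eqref{eq:covder}, an induction on $k$ shows that the tensors $u^*\big((D^{g_\mu})^kJ_\mu\big)_{p_\mu}$ and $u^*\big((D^{g_\mu})^k\Rm(g_\mu)\big)_{p_\mu}$ can be written as universal polynomial expressions in $S^\mu$, $I_{\st}$, $\langle\,,\rangle_{\st}$ and $\Rm(g_\mu)_{p_\mu}$. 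The inductive step uses that the $D^{g_\mu}$-derivative of a $\fG_\mu$-invariant tensor field is again $\fG_\mu$-invariant, so that at $p_\mu$ it equals minus the derivation action of $S^\mu$ on the previous stage. Since $S^\mu$ (via \eqref{eq:defS}) and $\Rm(g_\mu)_{p_\mu}$ (via \eqref{eq:Riemcurvature}) are themselves polynomials in the components of $\mu$, every entry of $\theta^s(\mu)$ is a polynomial function of $\mu$. This settles case (i): as $\mu^{(n)} \to \mu^{(\infty)}$ in $\cW_{q,m}$, the values of these polynomials converge and therefore $\theta^s(\mu^{(n)}) \to \theta^s(\mu^{(\infty)})$ in $\cX^s(m)$ for every $s\geq\jmath(m)+2$.

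For case (ii), the crucial observation is that the polynomials above involve $\mu$ only through the projection $\mu_{\bR^{2m}}: \bR^{2m}\wedge\bR^{2m}\to\bR^{2m}$ and through the homomorphism $\ad_\mu(\,\cdot\,)|_{\bR^{2m}}: \bR^q\to\End(\bR^{2m})$; no inner product on $\bR^q$ or on $\gg_\mu$ enters, so the dependence extends continuously from $\eH^{\rm loc,\rm alm\text{-}\bC}_{q,m}$ to all of $\cW_{q,m}$. By the construction in Remark \ref{rmk:alm-eff}, the ineffective ideal $\bR^{q-q'}\subset \bR^q$ acts trivially on $\bR^{2m}$ under $\tilde{\mu}^{(\infty)}$, hence $\mu^{(\infty)}_{\bR^{2m}} = \tilde{\mu}^{(\infty)}_{\bR^{2m}}$ and $\ad_{\tilde{\mu}^{(\infty)}}(\tilde{\mu}^{(\infty)}_{\gh_{\tilde\mu}}(X,Y))|_{\bR^{2m}}$ coincides with $\ad_{\mu^{(\infty)}}(\mu^{(\infty)}_{\gh_{\mu^{(\infty)}}}(X,Y))|_{\bR^{2m}}$. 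Therefore the polynomials return the same output when evaluated on $\tilde{\mu}^{(\infty)}$ or on $\mu^{(\infty)}$, and combining this identification with the continuity argument of case (i) yields $\theta^s(\mu^{(n)}) \to \theta^s(\mu^{(\infty)})$.

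The main obstacle is the bookkeeping of the inductive polynomial formulas for $(D^{g_\mu})^k J_\mu$ and $(D^{g_\mu})^k\Rm(g_\mu)$ at the origin: at each iteration one must verify that the tensor field being differentiated is still $\fG_\mu$-invariant so that the $\widetilde{D}^\mu$-part of $D^{g_\mu}$ vanishes and only the derivation action of $S^\mu$ survives. This is straightforward because invariance propagates through covariant differentiation and tensorial contractions of invariant tensors, but it requires careful tracking of the derivation action of $S^\mu$ on the higher-order covariant tensors produced at earlier stages.
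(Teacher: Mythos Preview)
Your proposal is correct and follows essentially the same approach as the paper: use that $S^\mu$, $\Rm(\mu)$, and all higher covariant derivatives are given by explicit polynomial formulas in the structure constants (via \eqref{eq:defS}, \eqref{eq:Riemcurvature}, \eqref{eq:covder}), hence depend continuously on $\mu$. The paper's proof is extremely terse and does not spell out case~(ii) of Definition~\ref{def:algconv}; your explicit observation that the relevant polynomials depend on $\mu$ only through $\mu_{\bR^{2m}}$ and $\ad_\mu(\,\cdot\,)|_{\bR^{2m}}$, and are therefore insensitive to the ineffective ideal, is a genuine improvement in clarity over the paper's presentation.
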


\begin{proof}
Assume that $(\mu^{(n)}) \subset \eH^{\rm loc, \rm alm\text{-}\bC}_{m}$ converges algebraically to $\mu^{(\infty)} \in \eH^{\rm loc, \rm alm\text{-}\bC}_{m}$. From \eqref{eq:defS}, it follows that $S^{\mu^{(n)}} \to S^{\mu^{(\infty)}}$ in the standard Euclidean topology. Therefore, the proof follows from \eqref{eq:Riemcurvature} and \eqref{eq:covder}.
\end{proof}

Notice that, in the Riemannian case, the converse assertion of Proposition \eqref{prop:Lau} does not hold true. A counterexample consisting on a sequence of Ricci flow blow-downs on the universal cover of $\fSL(2,\bR)$ is discussed in \cite[Ex 9.1]{bohm-lafuente}.
The phenomenon of sequences that converge infinitesimally but do not admit any convergent subsequence in the algebraic topology is called {\it algebraic collapse} \cite[Sect 5]{bohm-lafuente}. \smallskip

The last topology we consider in the moduli space $\eH^{\rm loc, \rm alm\text{-}\bC}_{m}$ is the pointed convergence topology (see e.g. Definition \ref{defpointed}). More precisely, by means of Theorem \ref{thm:existence}, for any element $\mu \in \eH^{\rm loc, \rm alm\text{-}\bC}_{m}$ with $|\sec(\mu)| \leq 1$, there exists a unique, up to equivariant pseudo-holomorphic isometry, $2m$-dimensional almost-Hermitian geometric model $(\eB_{\mu},\hat{J}_{\mu},\hat{g}_{\mu})$ in the class $\mu$. For the sake of notation, we set
$$
\eH^{\rm loc, \rm alm\text{-}\bC}_{m}(1) \= \{ \mu \in \eH^{\rm loc, \rm alm\text{-}\bC}_{m} : |\sec(\mu)| \leq 1 \}
$$
and we observe that, for any $\mu \in \eH^{\rm loc, \rm alm\text{-}\bC}_{m}$, there exists a rescaling constant $c >0$ such that $c \cdot \mu \in \eH^{\rm loc, \rm alm\text{-}\bC}_{m}(1)$, where the $\bR_{>0}$-action on the moduli space $\eH^{\rm loc, \rm alm\text{-}\bC}_{m}$, according to the decomposition \eqref{eq:decmu}, is given by
$$
(c \cdot \mu)|_{\gh_{\mu} \wedge \gg_{\mu}} \= \mu|_{\gh_{\mu} \wedge \gg_{\mu}} \,\, , \quad
(c \cdot \mu)_{\gh_{\mu}} \= \tfrac1{c^2}\mu_{\gh_{\mu}} \,\, , \quad
(c \cdot \mu)_{\bR^{2m}} \= \tfrac1c \mu_{\bR^{2m}} \,\, .
$$
Indeed, the space $({\fG}_{c \cdot \mu}/{\fH}_{c \cdot \mu},J_{c \cdot \mu},g_{c \cdot \mu})$ turns out to be locally equivariantly pseudo-holomorphically isometric to $({\fG}_{\mu}/{\fH}_{\mu},J_{\mu},c^2g_{\mu})$, and so $\sec(c \cdot \mu) = \tfrac1{c}\sec(\mu)$. 

Let us notice now that, by the very definition, the convergence in the pointed $\cC^{s+2}$-topology of a sequence of geometric models in $\eH^{\rm loc, \rm alm\text{-}\bC}_{m}(1)$ implies the $s$-infinitesimal convergence. Concerning the opposite implication, the following weaker version holds true.

\begin{theorem} \label{thm:inf2point}
If a sequence $(\mu^{(n)}) \subset \eH^{\rm loc, \rm alm\text{-}\bC}_{m}(1)$ converges $(s+1)$-infinitesimally to $\mu^{(\infty)}\in \eH^{\rm loc, \rm alm\text{-}\bC}_{m}(1)$ for some integer $s \geq \jmath(m)+2$, then the corresponding geometric models $(\eB_{\mu^{(n)}},\hat{J}_{\mu^{(n)}},\hat{g}_{\mu^{(n)}})$ converge to the geometric model $(\eB_{\mu^{(\infty)}},\hat{J}_{\mu^{(\infty)}},\hat{g}_{\mu^{(\infty)}})$ in the pointed $\cC^{s+2,\a}$-topology for any $0\leq\a<1$.
\end{theorem}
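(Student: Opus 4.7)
The plan is to combine the Cheeger-Gromov subconvergence of Theorem \ref{thm:conv} with the uniqueness part of Theorem \ref{thm:CNcurvmod}. First, I would translate the $(s+1)$-infinitesimal convergence hypothesis into uniform bounds of Levi-Civita type. By definition, $\theta^{s+1}(\mu^{(n)})\to\theta^{s+1}(\mu^{(\infty)})$ in $\cX^{s+1}(m)$, so the tensors $(D^{\hat g^{(n)}})^j \hat J^{(n)}$ for $1\leq j\leq s+3$ and $(D^{\hat g^{(n)}})^i \Rm(\hat g^{(n)})$ for $0\leq i\leq s+1$ are uniformly bounded in $n$ at the origin $0\in B^{2m}$. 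Because the norms $|(D^{\hat g^{(n)}})^j \hat J^{(n)}|_{\hat g^{(n)}}$ and $|(D^{\hat g^{(n)}})^i \Rm(\hat g^{(n)})|_{\hat g^{(n)}}$ are invariant under local pseudo-holomorphic isometries, local homogeneity propagates these pointwise bounds to the whole of $B^{2m}$, uniformly in $n$.

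Second, I would convert these Levi-Civita bounds into Gauduchon bounds via Proposition \ref{prop:stime}(ii) with $k=s+1$ and any fixed $t\in\bR$, obtaining a constant $K>0$ such that
\begin{equation*}
\sum_{i=0}^{s+1}\big|(\n^{(n),t})^i\W^t(\hat J^{(n)},\hat g^{(n)})\big|_{\hat g^{(n)}}+\sum_{j=0}^{s+2}\big|(\n^{(n),t})^j T^t(\hat J^{(n)},\hat g^{(n)})\big|_{\hat g^{(n)}}<K
\end{equation*}
on all of $B^{2m}$, uniformly in $n$. Applying Theorem \ref{thm:conv} with $k=s+1$, I extract a subsequence (still denoted by $(n)$) so that $(B^{2m},\hat J^{(n)},\hat g^{(n)})$ converges in the pointed $\cC^{s+2,\a}$-topology to some almost-Hermitian geometric model $(B^{2m},\hat J',\hat g')$.

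Third, I would identify the subsequential limit with $(\eB_{\mu^{(\infty)}},\hat J_{\mu^{(\infty)}},\hat g_{\mu^{(\infty)}})$. The $\cC^{s+2,\a}$-convergence of $\hat g^{(n)}$ and $\hat J^{(n)}$, combined with the standard Christoffel-symbol bookkeeping, ensures that $(D^{\hat g^{(n)}})^i\Rm(\hat g^{(n)})_0\to (D^{\hat g'})^i\Rm(\hat g')_0$ for $0\leq i\leq s$ and $(D^{\hat g^{(n)}})^j \hat J^{(n)}_0 \to (D^{\hat g'})^j \hat J'_0$ for $0\leq j\leq s+2$; moreover, the canonical coordinate frame $(e_i^{\zero})$ is unitary at $0$ for every element of the sequence and for the limit, since all our geometric models are normalized so that $\hat J|_0=I_{\st}$. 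Hence the Hermitian $s$-tuple at $0\in B^{2m}$ of $(B^{2m},\hat J',\hat g')$ equals $\lim_n \theta^s(\mu^{(n)})=\theta^s(\mu^{(\infty)})$. Since $s\geq \jmath(m)+2$, Theorem \ref{thm:CNcurvmod} yields a local pointed pseudo-holomorphic isometry from a neighborhood of $0$ in $(B^{2m},\hat J_{\mu^{(\infty)}},\hat g_{\mu^{(\infty)}})$ to one in $(B^{2m},\hat J',\hat g')$, and Lemma \ref{lemma_unifext1} extends this to a global pseudo-holomorphic isometry between the two geometric models. By a standard subsequence-of-subsequence argument, uniqueness of the limit forces the full sequence $(\eB_{\mu^{(n)}},\hat J_{\mu^{(n)}},\hat g_{\mu^{(n)}})$ to converge in the pointed $\cC^{s+2,\a}$-topology to $(\eB_{\mu^{(\infty)}},\hat J_{\mu^{(\infty)}},\hat g_{\mu^{(\infty)}})$.

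The main obstacle I anticipate is the identification step: one must verify carefully that the $\cC^{s+2,\a}$-convergence of the pairs $(\hat J^{(n)},\hat g^{(n)})$ genuinely produces convergence of the Hermitian $s$-tuple at the origin \emph{within the same} frame-normalization used to define $\theta^s(\mu^{(n)})$. The choice of geometric-model normalization (normal coordinates and $\hat J|_0=I_{\st}$) is precisely what makes this compatibility automatic; without it, one would have to argue about the limit of unitary frames and appeal to an Ascoli-type argument on the side of local automorphisms.
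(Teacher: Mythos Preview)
Your proposal is correct and follows essentially the same route as the paper: obtain uniform Levi-Civita bounds from the $(s{+}1)$-infinitesimal convergence, feed them (via Proposition~\ref{prop:stime}) into Theorem~\ref{thm:conv} to get $\cC^{s+2,\a}$-subconvergence, identify the limit through Theorem~\ref{thm:CNcurvmod} (and Lemma~\ref{lemma_unifext1}), and conclude by the subsequence-of-subsequence trick. The only cosmetic redundancy is the round trip through Proposition~\ref{prop:stime}(ii) and then back via~(i) inside Theorem~\ref{thm:conv}; the paper's proof does the same, just more tersely.
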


\begin{proof}
Assume that $(\mu^{(n)}) \subset \eH^{\rm loc, \rm alm\text{-}\bC}_{m}(1)$ converges $(s+1)$-infinitesimally to $\mu^{(\infty)} \in \eH^{\rm loc, \rm alm\text{-}\bC}_{m}(1)$ for some integer $s \geq \jmath(m)+2$. Then, by Proposition \ref{prop:stime} and Theorem \ref{thm:conv}, one can pass to a subsequence $(\mu^{(n_i)}) \subset (\mu^{(n)})$ such that the associated almost-Hermitian geometric models $(\eB_{\mu^{(n_i)}},\hat{J}_{\mu^{(n_i)}},\hat{g}_{\mu^{(n_i)}})$ converge to a limit geometric model in the pointed $\cC^{s+2,\a}$-topology for any $0\leq\a<1$ as $i \to +\infty$. By Theorem \ref{thm:CNcurvmod}, any convergent subsequence of $(\eB_{\mu^{(n)}},\hat{J}_{\mu^{(n)}},\hat{g}_{\mu^{(n)}})$ in the pointed $\cC^{s+2,\a}$-topology necessarily converges to the almost-Hermitian geometric model $(\eB_{\mu^{(\infty)}},\hat{J}_{\mu^{(\infty)}},\hat{g}_{\mu^{(\infty)}})$ of $\mu^{(\infty)}$. This implies that the full sequence $(\eB_{\mu^{(n)}},\hat{J}_{\mu^{(n)}},\hat{g}_{\mu^{(n)}})$ converges in the pointed $\cC^{s+2,\a}$-topology to $(\eB_{\mu^{(\infty)}},\hat{J}_{\mu^{(\infty)}},\hat{g}_{\mu^{(\infty)}})$.
\end{proof}

As a direct corollary, we obtain

\begin{corollary}\label{cor:inf-pointed}
A sequence $(\mu^{(n)}) \subset \eH^{\rm loc, \rm alm\text{-}\bC}_{m}(1)$ converges infinitesimally to $\mu^{(\infty)} \in \eH^{\rm loc, \rm alm\text{-}\bC}_{m}(1)$ if and only if the corresponding geometric models $(\eB_{\mu^{(n)}},\hat{J}_{\mu^{(n)}},\hat{g}_{\mu^{(n)}})$ converge to the geometric model $(\eB_{\mu^{(\infty)}},\hat{J}_{\mu^{(\infty)}},\hat{g}_{\mu^{(\infty)}})$ in the pointed $\cC^{\infty}$-topology.
\end{corollary}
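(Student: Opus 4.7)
My plan is to deduce this as a direct corollary of Theorem \ref{thm:inf2point}, by unpacking the meaning of pointed $\cC^{\infty}$-convergence as simultaneous pointed $\cC^{k,\a}$-convergence for all integers $k \geq 0$ and all $0 \leq \a < 1$.

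For the forward direction, suppose $(\mu^{(n)})$ converges infinitesimally to $\mu^{(\infty)}$, meaning that $(\mu^{(n)})$ converges $s$-infinitesimally to $\mu^{(\infty)}$ for every $s \geq \jmath(m)+2$. Fix an arbitrary integer $k \geq 0$ and choose $s \geq \jmath(m)+2$ with $s+2 \geq k$. Since $(\mu^{(n)})$ converges in particular $(s+1)$-infinitesimally to $\mu^{(\infty)}$, Theorem \ref{thm:inf2point} yields pointed $\cC^{s+2,\a}$-convergence of $(\eB_{\mu^{(n)}},\hat{J}_{\mu^{(n)}},\hat{g}_{\mu^{(n)}})$ to $(\eB_{\mu^{(\infty)}},\hat{J}_{\mu^{(\infty)}},\hat{g}_{\mu^{(\infty)}})$ for every $0 \leq \a < 1$, which implies pointed $\cC^{k,\a}$-convergence. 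Since $k$ is arbitrary, this is precisely pointed $\cC^{\infty}$-convergence.

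For the reverse direction, assume pointed $\cC^{\infty}$-convergence of the geometric models and fix $s \geq \jmath(m)+2$. By assumption, the pull-backs $\phi_{\d}^{(n)*}\hat{g}^{(n)}$ and $(\diff\phi_{\d}^{(n)})^{-1}\!\circ\hat{J}^{(n)}\!\circ(\diff\phi_{\d}^{(n)})$ converge in $\cC^{s+2,\a}$-norm to $\hat{g}^{(\infty)}$, $\hat{J}^{(\infty)}$ on any compact neighbourhood of the origin. Expressing the Levi-Civita connection, the Riemannian curvature $\Rm(g)$, and their iterated covariant derivatives up to order $s$ as universal polynomial expressions in the components of $g$ and their partial derivatives up to order $s+2$, and similarly writing $(D^g)^j J$ for $1 \leq j \leq s+2$ in terms of the components of $J$ and $g$ and their derivatives, we obtain convergence at the origin of the full Hermitian $s$-tuples. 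Appealing to Theorem \ref{thm:CNcurvmod} together with the $\fU(m)$-invariance built into the definition of $\cX^s(m)$, this gives $s$-infinitesimal convergence of $(\mu^{(n)})$ to $\mu^{(\infty)}$, and since $s$ is arbitrary we obtain infinitesimal convergence.

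I do not expect any serious obstacle here: the statement is essentially a packaging of Theorem \ref{thm:inf2point} over all orders $s$, together with the elementary fact that Hermitian $s$-tuples at the origin depend continuously on the $\cC^{s+2}$-jet of $(g,J)$. The only point requiring a little care is to make sure that the regularity indices line up (the $\cC^{s+2,\a}$-norm is exactly what is needed to extract the $s$-th derivative of the curvature and the $(s+2)$-th derivative of $J$), but this is already implicit in the statement of Theorem \ref{thm:inf2point}.
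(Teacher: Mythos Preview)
Your proposal is correct and follows essentially the same approach as the paper: the forward direction is Theorem \ref{thm:inf2point} applied for all $s$, and the reverse direction is the observation (stated in the paper just before Theorem \ref{thm:inf2point}) that pointed $\cC^{s+2}$-convergence yields $s$-infinitesimal convergence by continuity of the curvature jets. The paper in fact gives no explicit proof at all, merely writing ``As a direct corollary, we obtain''; your write-up simply spells out what this means.
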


We end this section by summarizing the various topologies in the following diagram
$$
\xymatrix{
& \text{algebraic conv} \ar@{=>}[dl]_{\text{Prop \ref{prop:Lau}}\,\,} \ar@{=>}[dr] & \\
\text{infinitesimal conv} \ar@{=>}[d] \ar@{<=>}[rr]^{\text{Cor \ref{cor:inf-pointed}}} & & {\substack{\cC^{\infty}\text{-pointed conv}\\\text{of geom models}}} \ar@{=>}[d] \\
s\text{-infinitesimal conv} \ar@{=>}@/^/[rr]^{\text{Thm \ref{thm:inf2point} with }k=s+1} & & {\substack{\cC^{k,\a}\text{-pointed conv}\\\text{of geom models}}} \ar@{=>}@/^/[ll]^{k=s+2}
}
$$
and by collecting some open problems.
\begin{remark}[Open questions, 3] \label{rmk:open-3} \hfill \par
\begin{itemize}
\item[i)] Find an explicit example of algebraic collapse for locally homogeneous almost-Hermitian spaces.
\item[ii)] Show that, in real dimension $2m>2$, the $s$-infinitesimal convergence is strictly weaker than the $(s+1)$-infinitesimal convergence for any $s \geq \jmath(m)+2$. In the Riemannian case, this has been proven in \cite[Theorem C]{pediconi-geomded} by using a slight modification of Berger spheres.
\item[iii)] We do not know whether the $s$-infinitesimal convergence is equivalent to the convergence of geometric models in the pointed $\cC^{s+2}$-topology. In contrast to the other questions, this is open also in the Riemannian case.
\end{itemize}
\end{remark}

\section{Examples of explicit computations of Gauduchon curvatures}\label{sec:examples}
\setcounter{equation} 0

\subsection{The Iwasawa threefold} \label{sec:iwasawa} \hfill \par

Consider the {\it Iwasawa manifold}
$$ M \= {\rm Heis}(3;\bZ[\ti])\backslash {\rm Heis}(3;\bC) \,\,,$$
namely, the compact $3$-dimensional complex manifold defined as the quotient of the $3$-dimensional complex Heisenberg group
$$
{\rm Heis}(3;\bC) \= \left\{ \left(\begin{matrix}
1 & z^1 & z^3 \\
& 1 & z^2 \\
& & 1
\end{matrix}\right)\in\GL(3;\bC) \;:\; z^1,z^2,z^3 \in \bC \right\}
$$
by the cocompact discrete subgroup ${\rm Heis}(3;\bZ[\ti])\={\rm Heis}(3;\bC)\cap \GL(3;\bZ[\ti])$. Denote by $J$ the complex structure of $M$ induced by the natural left-invariant complex structure of ${\rm Heis}(3;\bC)$.

Let $g$ be a locally homogeneous Hermitian structure on $(M,J)$. Fix $(e_0,\ldots,e_5)$ unitary frame for $TM$ with respect to $(J,g)$, and denote by $(e^0,\ldots,e^5)$ the dual coframe for $T^*M$. The general structure equations are
\begin{equation} \label{eq:structure-iwasawa}
\mu(e_0,e_2)=\a e_4 \,\,,\quad
\mu(e_0,e_3)=\a e_5 \,\,,\quad
\mu(e_1,e_2)=\a e_5 \,\,,\quad
\mu(e_1,e_3)=-\a e_4 \,\,,
\end{equation}
depending on parameters $\a\in\mathbb R^{>0}$. Note that any such Hermitian metric is {\em balanced} in the sense of Michelsohn, namely, $\diff\w^2=0$.

\begin{rem}\label{rmk:std-iwasawa}
Equations \eqref{eq:structure-iwasawa} can be derived as follows.
Consider the standard left-invariant coframe of $(1,0)$-forms on ${\rm Heis}(3;\bC)$ given by
$$
\f^1\=\diff z^1 \,\,, \quad \f^2\=\diff z^2 \,\,,\quad \f^3\=\diff z^3-z^1\diff z^2 \,\, ,
$$
and notice that the structure equations, with respect to this coframe, are
$$
\diff\f^1=0 \,\,, \quad \diff\f^2=0 \,\,, \quad \diff\f^3=-\f^1\wedge\f^2 \,\, .
$$
Then, by \cite[p 1032]{ugarte-villacampa}, the fundamental $(1,1)$-form associated to any left-invariant Hermitian metric $g$ on ${\rm Heis}(3;\bC)$ has the form
$$
2\w = - \ti r^2 \f^1\wedge\bar\f^1 - \ti \sigma^2 \f^2\wedge\bar\f^2 - \ti \tau^2 \f^3\wedge\bar\f^3 + \big( u \f^1\wedge\bar\f^2 - \bar u \f^2\wedge\bar\f^1 \big) \,\, ,
$$
where $r,\sigma,\tau\in\mathbb R^{>0}$ and $u\in\mathbb C$ are such that $r^2\sigma^2>|u|^2$.
By changing frame to make it $(J,g)$-unitary, \eqref{eq:structure-iwasawa} follows by setting
$$
\a=\sqrt{\tfrac{r^2}{r^2\sigma^2-|u|^2}}\cdot\tfrac{\tau}{r}\,\, .
$$
In particular, the standard Hermitian metric corresponds to parameter $\a=1$.
\end{rem}

By using formulas in Section \ref{Curvmu}, we can compute all the relevant geometric data of $(M,J,g)$.
We clearly have that $N^\mu=0$, hence $(F^\mu)^-=0$ and $(F^\mu)^+=F^\mu$. Moreover, $F^\mu$ has the following non-zero components, up to symmetries:
$$
F^\mu(e_0,e_2,e_4)=F^\mu(e_0,e_3,e_5)=F^\mu(e_1,e_2,e_5)=-F^\mu(e_1,e_3,e_4)=-\a \,\,.
$$
It is straighforward to compute
\begin{footnotesize}
$$
\begin{array}{lll}
S^\mu(e_0)=
\tfrac{\a}{2}
\left(\begin{array}{cccccc}
0 & 0 & 0 & 0 & 0 & 0 \\
0 & 0 & 0 & 0 & 0 & 0 \\
0 & 0 & 0 & 0 & 1 & 0 \\
0 & 0 & 0 & 0 & 0 & 1 \\
0 & 0 & -1 & 0 & 0 & 0 \\
0 & 0 & 0 & -1 & 0 & 0
\end{array}\right)
\,,
&
S^\mu(e_1)=
\tfrac{\a}{2}
\left(\begin{array}{cccccc}
0 & 0 & 0 & 0 & 0 & 0 \\
0 & 0 & 0 & 0 & 0 & 0 \\
0 & 0 & 0 & 0 & 0 & 1 \\
0 & 0 & 0 & 0 & -1 & 0 \\
0 & 0 & 0 & 1 & 0 & 0 \\
0 & 0 & -1 & 0 & 0 & 0
\end{array}\right)
\,,
&
S^\mu(e_2)=
\tfrac{\a}{2}
\left(\begin{array}{cccccc}
0 & 0 & 0 & 0 & -1 & 0 \\
0 & 0 & 0 & 0 & 0 & -1 \\
0 & 0 & 0 & 0 & 0 & 0 \\
0 & 0 & 0 & 0 & 0 & 0 \\
1 & 0 & 0 & 0 & 0 & 0 \\
0 & 1 & 0 & 0 & 0 & 0
\end{array}\right)
\,\,,
\\[10pt]
S^\mu(e_3)=
\tfrac{\a}{2}
\left(\begin{array}{cccccc}
0 & 0 & 0 & 0 & 0 & -1 \\
0 & 0 & 0 & 0 & 1 & 0 \\
0 & 0 & 0 & 0 & 0 & 0 \\
0 & 0 & 0 & 0 & 0 & 0 \\
0 & -1 & 0 & 0 & 0 & 0 \\
1 & 0 & 0 & 0 & 0 & 0
\end{array}\right)
\,,
&
S^\mu(e_4)=
\tfrac{\a}{2}
\left(\begin{array}{cccccc}
0 & 0 & -1 & 0 & 0 & 0 \\
0 & 0 & 0 & 1 & 0 & 0 \\
1 & 0 & 0 & 0 & 0 & 0 \\
0 & -1 & 0 & 0 & 0 & 0 \\
0 & 0 & 0 & 0 & 0 & 0 \\
0 & 0 & 0 & 0 & 0 & 0
\end{array}\right)
\,,
&
S^\mu(e_5)=
\tfrac{\a}{2}
\left(\begin{array}{cccccc}
0 & 0 & 0 & -1 & 0 & 0 \\
0 & 0 & -1 & 0 & 0 & 0 \\
0 & 1 & 0 & 0 & 0 & 0 \\
1 & 0 & 0 & 0 & 0 & 0 \\
0 & 0 & 0 & 0 & 0 & 0 \\
0 & 0 & 0 & 0 & 0 & 0
\end{array}\right)
\,,
\end{array}
$$
\end{footnotesize}
and
\begin{footnotesize}
$$
\begin{gathered}
A^{t,\mu}(e_0)=
\tfrac{\a (t - 1)}{2}
\left(\begin{array}{cccccc}
0 & 0 & 0 & 0 & 0 & 0 \\
0 & 0 & 0 & 0 & 0 & 0 \\
0 & 0 & 0 & 0 & -1 & 0 \\
0 & 0 & 0 & 0 & 0 & -1 \\
0 & 0 & 1 & 0 & 0 & 0 \\
0 & 0 & 0 & 1 & 0 & 0
\end{array}\right)
\,\,,
\qquad
A^{t,\mu}(e_1)=
\tfrac{\alpha (t - 1)}{2}
\left(\begin{array}{cccccc}
0 & 0 & 0 & 0 & 0 & 0 \\
0 & 0 & 0 & 0 & 0 & 0 \\
0 & 0 & 0 & 0 & 0 & -1 \\
0 & 0 & 0 & 0 & 1 & 0 \\
0 & 0 & 0 & -1 & 0 & 0 \\
0 & 0 & 1 & 0 & 0 & 0
\end{array}\right)
\,\,,
\\[10pt]
A^{t,\mu}(e_2)=
\tfrac{\alpha (t - 1)}{2}
\left(\begin{array}{cccccc}
0 & 0 & 0 & 0 & 1 & 0 \\
0 & 0 & 0 & 0 & 0 & 1 \\
0 & 0 & 0 & 0 & 0 & 0 \\
0 & 0 & 0 & 0 & 0 & 0 \\
-1 & 0 & 0 & 0 & 0 & 0 \\
0 & -1 & 0 & 0 & 0 & 0
\end{array}\right)
\,\,,
\qquad
A^{t,\mu}(e_3)=
\tfrac{\alpha (t - 1)}{2}
\left(\begin{array}{cccccc}
0 & 0 & 0 & 0 & 0 & 1 \\
0 & 0 & 0 & 0 & -1 & 0 \\
0 & 0 & 0 & 0 & 0 & 0 \\
0 & 0 & 0 & 0 & 0 & 0 \\
0 & 1 & 0 & 0 & 0 & 0 \\
-1 & 0 & 0 & 0 & 0 & 0
\end{array}\right)
\,\,,
\\[10pt]
A^{t,\mu}(e_4)=
A^{t,\mu}(e_5)=0
\,\,.
\end{gathered}
$$
\end{footnotesize}
In particular, it is easy to check that the Chern connection (corresponding to parameter $t=1$) is flat.
Finally, we compute the Gauduchon curvature: the non-zero components are
\begin{tiny}
$$
\begin{gathered}
\Omega^{t}(\mu)(e_0,e_1)=
\tfrac{\alpha^{2} (t - 1)^{2}}{2}
\left(\begin{array}{cccccc}
0 & 0 & 0 & 0 & 0 & 0 \\
0 & 0 & 0 & 0 & 0 & 0 \\
0 & 0 & 0 & -1 & 0 & 0 \\
0 & 0 & 1 & 0 & 0 & 0 \\
0 & 0 & 0 & 0 & 0 & 1 \\
0 & 0 & 0 & 0 & -1 & 0
\end{array}\right)
\,\,
\qquad
\Omega^{t}(\mu)(e_0,e_2)=
\tfrac{\alpha^{2} (t - 1)^{2}}{4}
\left(\begin{array}{cccccc}
0 & 0 & 1 & 0 & 0 & 0 \\
0 & 0 & 0 & 1 & 0 & 0 \\
-1 & 0 & 0 & 0 & 0 & 0 \\
0 & -1 & 0 & 0 & 0 & 0 \\
0 & 0 & 0 & 0 & 0 & 0 \\
0 & 0 & 0 & 0 & 0 & 0
\end{array}\right)
\,\,
\\[10pt]
\Omega^{t}(\mu)(e_0,e_3)=
\tfrac{\alpha^{2} (t - 1)^{2}}{4}
\left(\begin{array}{cccccc}
0 & 0 & 0 & 1 & 0 & 0 \\
0 & 0 & -1 & 0 & 0 & 0 \\
0 & 1 & 0 & 0 & 0 & 0 \\
-1 & 0 & 0 & 0 & 0 & 0 \\
0 & 0 & 0 & 0 & 0 & 0 \\
0 & 0 & 0 & 0 & 0 & 0
\end{array}\right)
\,\,
\qquad
\Omega^{t}(\mu)(e_1,e_2)=
\tfrac{\alpha^{2} (t - 1)^{2}}{4}
\left(\begin{array}{cccccc}
0 & 0 & 0 & -1 & 0 & 0 \\
0 & 0 & 1 & 0 & 0 & 0 \\
0 & -1 & 0 & 0 & 0 & 0 \\
1 & 0 & 0 & 0 & 0 & 0 \\
0 & 0 & 0 & 0 & 0 & 0 \\
0 & 0 & 0 & 0 & 0 & 0
\end{array}\right)
\,\,,
\\[10pt]
\Omega^{t}(\mu)(e_1,e_3)=
\tfrac{\alpha^{2} (t - 1)^{2}}{4}
\left(\begin{array}{cccccc}
0 & 0 & 1 & 0 & 0 & 0 \\
0 & 0 & 0 & 1 & 0 & 0 \\
-1 & 0 & 0 & 0 & 0 & 0 \\
0 & -1 & 0 & 0 & 0 & 0 \\
0 & 0 & 0 & 0 & 0 & 0 \\
0 & 0 & 0 & 0 & 0 & 0
\end{array}\right)
\,\,,
\qquad
\Omega^{t}(\mu)(e_2,e_3)=
\tfrac{\alpha^{2} (t - 1)^{2}}{2}
\left(\begin{array}{cccccc}
0 & -1 & 0 & 0 & 0 & 0 \\
1 & 0 & 0 & 0 & 0 & 0 \\
0 & 0 & 0 & 0 & 0 & 0 \\
0 & 0 & 0 & 0 & 0 & 0 \\
0 & 0 & 0 & 0 & 0 & 1 \\
0 & 0 & 0 & 0 & -1 & 0
\end{array}\right)
\,\,,
\end{gathered}
$$
\end{tiny}

The first Gauduchon-Ricci form is zero. The second Gauduchon-Ricci form is
\begin{tiny}
$$
\rho^{t,(2)}(\mu)=\tfrac{\alpha^2(t-1)^2}{2}
\left(\begin{array}{cccccc}
0 & -1 & 0 & 0 & 0 & 0 \\
1 & 0 & 0 & 0 & 0 & 0 \\
0 & 0 & 0 & -1 & 0 & 0 \\
0 & 0 & 1 & 0 & 0 & 0 \\
0 & 0 & 0 & 0 & 0 & 2 \\
0 & 0 & 0 & 0 & -2 & 0
\end{array}\right) \,\,.
$$
\end{tiny}
Finally, the Gauduchon scalar curvature is clearly $0$.
One can also compute the torsion $T^{t}(\mu)$, its covariant derivates $\n^{t,\mu}T^t(\mu)$, as well as the covariant derivatives $\n^{t,\mu}\Omega^{t}(\mu)$, etc.

\subsection{The Kodaira surface} \label{sec:kodaira} \hfill \par

We consider the {\em primary Kodaira surface}, see e.g. \cite{barth-peters-hulek-vandeven}. It is known that it is a compact quotient of the Lie group
\begin{equation}\label{eq:kodaira}
\fG \= \mathrm{Heis}(3;\bR) \times \bR \,\,,
\end{equation}
where $\mathrm{Heis}(3;\bR)$ denotes the $3$-dimensional real Heisenberg group, by means of the co-compact lattice $\Gamma\=\mathrm{Heis}(3;\bZ)\times\bZ$.
The group $\fG$ can be endowed with a left-invariant complex structure $J_0$, that is unique up to linear equivalence, that moves to the quotient $M\=\Gamma\backslash \fG$. The compact complex surface $(M,J)$ has Kodaira dimension $0$, odd first Betti number, and trivial canonical bundle. 

Arguing as in Remark \ref{rmk:std-iwasawa}, any locally homogeneous Hermitian structure $(J_0,g)$ is described by the unitary frame $(e_0,e_1,e_2,e_3)$ with structure equations
$$
\begin{gathered}
\mu(e_0,e_1)=\tfrac{\alpha}{r}  e_{0} - \tfrac{\beta}{r}  e_{1} - \tfrac{v}{r^{2}}  e_{3} \,\,, 
\qquad
\mu(e_0,e_2)=-\tfrac{\alpha^{2}}{v}  e_{0} + \tfrac{\alpha \beta}{v}  e_{1} + \tfrac{\alpha}{r}  e_{3} \,\,,
\\
\mu(e_0,e_3)=-\tfrac{\alpha \beta}{v}  e_{0} + \tfrac{\beta^{2}}{v}  e_{1} + \tfrac{\beta}{r}  e_{3} \,\,,
\qquad
\mu(e_1,e_2)=\tfrac{\alpha \beta}{v}  e_{0} - \tfrac{\beta^{2}}{v}  e_{1} - \tfrac{\beta}{r}  e_{3} \,\,,
\\
\mu(e_1,e_3)=-\tfrac{\alpha^{2}}{v}  e_{0} + \tfrac{\alpha \beta}{v}  e_{1} + \tfrac{\alpha}{r}  e_{3} \,\,,
\qquad
\mu(e_2,e_3)=\tfrac{{(\alpha^{2} + \beta^{2})} \alpha r}{v^{2}}  e_{0} -\tfrac{{(\alpha^{2} + \beta^{2})} \beta r}{v^{2}} e_{1} -\tfrac{\alpha^{2} + \beta^{2}}{v} e_{3} \,\,,
\end{gathered}
$$
depending on parameters $r,v\in\bR^{>0}$, $\a,\b\in \bR$.
In particular, the standard Hermitian structure corresponds to $r=v=1$ and $\alpha=\beta=0$ (see e.g. \cite{angella-dloussky-tomassini}).
With respect to this frame, the $F^\mu$ form is
$$
F^\mu=-\big( \tfrac{\alpha^{2}}{v} + \tfrac{\beta^{2}}{v} + \tfrac{v}{r^{2}} \big) e_{0} \wedge e_{1} \wedge e_{3} + \big( \tfrac{{(\alpha^{2} + \beta^{2})} \alpha r}{v^{2}} + \tfrac{\alpha}{r} \big)  e_{0} \wedge e_{2} \wedge e_{3} - \big( \tfrac{{(\alpha^{2} + \beta^{2})} \beta r}{v^{2}} + \tfrac{\beta}{r} \big)  e_{1} \wedge e_{2} \wedge e_{3} \,\, .
$$
As before, we can explicitly compute the Levi-Civita connection, the Gauduchon connections, and their related geometric quantities, see Appendix \ref{app:kodaira} for the relevant SageMath code.

As an example, the Chern connection ($t=1$) is given by:
\begin{tiny}
$$
\begin{gathered}
A^{t=1,\mu}( e_{0})=\left(\begin{array}{cccc}
0 & -\tfrac{\alpha}{r} & \tfrac{(\alpha^{2} - \beta^{2}) r^{2} - v^{2}}{2 \, r^{2} v} & \tfrac{\alpha \beta}{v} \\
\tfrac{\alpha}{r} & 0 & -\tfrac{\alpha \beta}{v} & \tfrac{(\alpha^{2}  - \beta^{2}) r^{2} - v^{2}}{2 \, r^{2} v} \\
-\tfrac{(\alpha^{2} - \beta^{2}) r^{2} - v^{2}}{2 \, r^{2} v} & \tfrac{\alpha \beta}{v} & 0 & \tfrac{\alpha}{r} \\
-\tfrac{\alpha \beta}{v} & -\tfrac{(\alpha^{2} - \beta^{2}) r^{2} - v^{2}}{2 \, r^{2} v} & -\tfrac{\alpha}{r} & 0
\end{array}\right) \,\,,
\\[10pt]
A^{t=1,\mu}( e_{1})=\left(\begin{array}{cccc}
0 & \tfrac{\beta}{r} & -\tfrac{\alpha \beta}{v} & \tfrac{(\alpha^{2} - \beta^{2}) r^{2} + v^{2}}{2 \, r^{2} v} \\
-\tfrac{\beta}{r} & 0 & -\tfrac{(\alpha^{2} - \beta^{2}) r^{2} + v^{2}}{2 \, r^{2} v} & -\tfrac{\alpha \beta}{v} \\
\tfrac{\alpha \beta}{v} & \tfrac{(\alpha^{2} - \beta^{2}) r^{2} + v^{2}}{2 \, r^{2} v} & 0 & -\tfrac{\beta}{r} \\
-\tfrac{(\alpha^{2} - \beta^{2}) r^{2} + v^{2}}{2 \, r^{2} v} & \tfrac{\alpha \beta}{v} & \tfrac{\beta}{r} & 0
\end{array}\right) \,\, ,
\\[10pt]
A^{t=1,\mu}( e_{2})=\left(\begin{array}{cccc}
0 & 0 & \frac{{\left(\alpha^{2} + \beta^{2}\right)} r^{2} + v^{2}}{2 \, r v^{2}}\beta  & -\frac{{\left(\alpha^{2} + \beta^{2}\right)} r^{2} + v^{2}}{2 \, r v^{2}} \alpha \\
0 & 0 & \frac{{\left(\alpha^{2} + \beta^{2}\right)} r^{2} + v^{2}}{2 \, r v^{2}}\alpha  & \frac{{\left(\alpha^{2} + \beta^{2}\right)} r^{2} + v^{2}}{2 \, r v^{2}}\beta \\
-\frac{{\left(\alpha^{2} + \beta^{2}\right)} r^{2} + v^{2}}{2 \, r v^{2}} \beta & -\frac{{\left(\alpha^{2} + \beta^{2}\right)} r^{2} + v^{2}}{2 \, r v^{2}} \alpha & 0 & 0 \\
\frac{{\left(\alpha^{2} + \beta^{2}\right)} r^{2} + v^{2}}{2 \, r v^{2}} \alpha & -\frac{{\left(\alpha^{2} + \beta^{2}\right)} r^{2} + v^{2}}{2 \, r v^{2}} \beta & 0 & 0
\end{array}\right) \,\, ,
\\[10pt]
A^{t=1,\mu}( e_{3})=\left(\begin{array}{cccc}
0 & -\frac{\alpha^{2} + \beta^{2}}{v} & \frac{{\left(\alpha^{2} + \beta^{2}\right)} r^{2} - v^{2}}{2 \, r v^{2}} \alpha & \frac{{\left(\alpha^{2} + \beta^{2}\right)} r^{2} - v^{2}}{2 \, r v^{2}} \beta \\
\frac{\alpha^{2} + \beta^{2}}{v} & 0 & -\frac{{\left(\alpha^{2} + \beta^{2}\right)} r^{2} - v^{2}}{2 \, r v^{2}} \beta & \frac{{\left(\alpha^{2} + \beta^{2}\right)} r^{2} - v^{2}}{2 \, r v^{2}} \alpha \\
-\frac{{\left(\alpha^{2} + \beta^{2}\right)} r^{2} - v^{2}}{2 \, r v^{2}} \alpha & \frac{{\left(\alpha^{2} + \beta^{2}\right)} r^{2} - v^{2}}{2 \, r v^{2}} \beta & 0 & \frac{\alpha^{2} + \beta^{2}}{v} \\
-\frac{{\left(\alpha^{2} + \beta^{2}\right)} r^{2} - v^{2}}{2 \, r v^{2}} \beta & -\frac{{\left(\alpha^{2} + \beta^{2}\right)} r^{2} - v^{2}}{2 \, r v^{2}} \alpha & -\frac{\alpha^{2} + \beta^{2}}{v} & 0
\end{array}\right) \,\, .
\end{gathered}
$$
\end{tiny}
It is easy to see that the first Chern-Ricci form vanishes, and the second Chern-Ricci form is given by:
\begin{tiny}
$$
\rho^{t=1,(2)}(\mu)
=\left(\begin{array}{cccc}
0 & -\tfrac{{L_1}^{2} {L_2}}{2r^{4} v^{4}} & -\tfrac{{L_1}^{2} \alpha}{r^{3} v^{3}} & -\tfrac{{L_1}^{2} \beta}{r^{3} v^{3}} \\
\tfrac{{L_1}^{2} {L_2}}{2\, r^{4} v^{4}} & 0 & \tfrac{{L_1}^{2} \beta}{r^{3} v^{3}} & -\tfrac{{L_1}^{2} \alpha}{r^{3} v^{3}} \\
\tfrac{{L_1}^{2} \alpha}{r^{3} v^{3}} & -\tfrac{{L_1}^{2} \beta}{r^{3} v^{3}} & 0 & \tfrac{{L_1}^{2} {L_2}}{2\,r^{4} v^{4}} \\
\tfrac{{L_1}^{2} \beta}{r^{3} v^{3}} & \tfrac{{L_1}^{2} \alpha}{r^{3} v^{3}} & -\tfrac{{L_1}^{2} {L_2}}{2\,r^{4} v^{4}} & 0
\end{array}\right)\,\, ,
$$
\end{tiny}
where we put
$$
L_1\=\alpha^{2} r^{2} + \beta^{2} r^{2} + v^{2}
\,\, ,
\qquad
L_2\=\alpha^{2} r^{2} + \beta^{2} r^{2} - v^{2}
\,\, .
$$

Finally, the Gauduchon scalar curvature is given by
$$
\scal^{t}(\mu)=-\tfrac{{(\alpha^{2} r^{2} + \beta^{2} r^{2} + v^{2})}^{3}}{r^{4} v^{4}}{(t - 1)} \,\, ,
$$
which vanishes for the Chern connection.

\subsection{The Kodaira-Thurston almost-complex \texorpdfstring{$4$}{4}-manifold} \label{sec:kodairathurston-almost} \hfill \par

We consider the same differentiable manifold $M=\Gamma\backslash \fG$ as in the previous Section, where $\fG$ is as in \eqref{eq:kodaira}. It is known that $\fG$ admits another left-invariant almost-complex structure $J_1$, which is non-integrable, that induces an almost-K\"ahlerian structure on the quotient $M$, see e.g. \cite{thurston, tralle-oprea}.

In Appendix \ref{app:kodaira-thurston}, we will construct an orthogonal frame $(w_0,w_1,w_2,w_3)$ such that $J_1w_0=w_2$, $J_1w_1=w_3$, whose structure equations depend on $r,\sigma\in\bR^{>0}$ and $u\=x+\ti y\in\bC$ such that $r^2\sigma^2>x^2+y^2$.
We will also compute the Gauduchon curvatures.

\appendix

\section{SageMath code}\label{app:sagemath}
\setcounter{equation} 0

In this Appendix, we collect the SageMath \cite{sagemath} code that we used for the explicit computations of Section \ref{sec:examples}.
The code is available at \url{https://github.com/danieleangella/locally-homogeneous-hermitian.git}.

\subsection{The Iwasawa threefolds (see Section \ref{sec:iwasawa})} \hfill \par

The following SageMath code
has been tested on CoCalc:
\begin{footnotesize}
\begin{lstlisting}
sage: version()
SageMath version 9.3, Release Date: 2021-05-09
\end{lstlisting}
\end{footnotesize}

We will make use of the following functions, to simplify matrices and forms depending on parameters:
\begin{footnotesize}
\begin{lstlisting}
sage: def simp_mat(A, dic={}):
        lista=[]
        for b in A.list():
            try:
                lista.append(b.subs(dic).factor())
            except:
                lista.append(b)
        try:
            return(matrix(A.nrows(),A.ncols(),lista))
        except:
            return(A)

sage: def simp_form(phi, dic={}):
        return(sum([phi.interior_product(b).constant_coefficient().subs(dic).factor()*b
                    for b in E.basis()]))
\end{lstlisting}
\end{footnotesize}

We construct the exterior algebra generated by $e_0, \ldots, e_5$, with Lie bracket determined by the structure equations in \eqref{eq:structure-iwasawa}:
\begin{footnotesize}
\begin{lstlisting}
sage: n = 6
sage: E = ExteriorAlgebra(SR, 'e', n)
sage: _ = var("alpha")
sage: d = E.coboundary({
            (0,2): alpha*E.gens()[4],
            (0,3): alpha*E.gens()[5],
            (1,2): alpha*E.gens()[5],
            (1,3): -alpha*E.gens()[4],
        })
\end{lstlisting}
\end{footnotesize}
We save the structure constants in the following dictionary:
\begin{footnotesize}
\begin{lstlisting}
sage: mu = {(a,b): sum([d(c).interior_product(a*b)*c for c in E.gens()])
            for a in E.gens() for b in E.gens()}
\end{lstlisting}
\end{footnotesize}
We also define the following function, to compute the Lie bracket:
\begin{footnotesize}
\begin{lstlisting}
sage: def Lie(x,y):
        return(sum([x.interior_product(a).constant_coefficient()
                    *y.interior_product(b).constant_coefficient()*mu[(a,b)]
                    for a in E.gens() for b in E.gens()]))
\end{lstlisting}
\end{footnotesize}

The almost-complex structure is given as follows:
\begin{footnotesize}
\begin{lstlisting}
sage: j = matrix(2, [0,-1,1,0])
sage: Jmat=block_diagonal_matrix([j for k in range(n/2)])
sage: J = E.lift_morphism(Jmat)
\end{lstlisting}
\end{footnotesize}
It is easy to check that the almost-complex structure is integrable:
\begin{footnotesize}
\begin{lstlisting}
sage: Nij={(a,b): Lie(J(a),J(b))-Lie(a,b)-J(Lie(J(a),b)+Lie(a,J(b)))
           for a in E.gens() for b in E.gens()}
sage: [Nij[(a,b)] for a in E.gens() for b in E.gens() if Nij[(a,b)]!=0]
[]
\end{lstlisting}
\end{footnotesize}

We compute $F^\mu$:
\begin{footnotesize}
\begin{lstlisting}
sage: F = -J(d(sum([E.gens()[2*j]*J(E.gens()[2*j]) for j in range(n/2)])))
sage: F
-alpha*e0*e2*e4 - alpha*e0*e3*e5 - alpha*e1*e2*e5 + alpha*e1*e3*e4
\end{lstlisting}
\end{footnotesize}

We compute the Levi-Civita connection $S^\mu$:
\begin{footnotesize}
\begin{lstlisting}
sage: S = {x: simp_mat(matrix(n,n,[-1/2*mu[(x,y)].interior_product(z).constant_coefficient()
                                   -1/2*mu[(z,x)].interior_product(y).constant_coefficient()
                                   -1/2*mu[(z,y)].interior_product(x).constant_coefficient()
                                   for z in E.gens() for y in E.gens()])) for x in E.gens()}
\end{lstlisting}
\end{footnotesize}
and the Gauduchon connection $A^{t,\mu}$:
\begin{footnotesize}
\begin{lstlisting}
sage: _ = var("t")
sage: A = {x: simp_mat(matrix(n,n,[S[x][E.gens().index(y),E.gens().index(z)]
                                   +(t+1)/4*F.interior_product(x*J(y)*J(z)).constant_coefficient()
                                   +(t-1)/4*F.interior_product(x*y*z).constant_coefficient()
                                   for y in E.gens() for z in E.gens()])) for x in E.gens()}
\end{lstlisting}
\end{footnotesize}
The Chern connection can be obtained by setting $t=1$:
\begin{footnotesize}
\begin{lstlisting}
sage: ACh = {x: simp_mat(A[x], {t:1}) for x in A.keys()}
\end{lstlisting}
\end{footnotesize}
For example, we can print the \LaTeX\ code for $S^\mu$ as
\begin{footnotesize}
\begin{lstlisting}
sage: for a in S.keys():
        print(r"S^\mu(%s)=%s" % (latex(a), latex(S[a])), "\n")
\end{lstlisting}
\end{footnotesize}

Finally, we compute the Gauduchon curvature:
\begin{footnotesize}
\begin{lstlisting}
sage: Omega = {(x,y): simp_mat(A[y]*A[x]-A[x]*A[y]
                               -(sum([matrix(n,n,[mu[(x,y)].interior_product(c).constant_coefficient()*b
                                                  for b in A[c].list()])
                                      for c in E.gens()]) if not mu[(x,y)]==0 else zero_matrix(n)))
                               for x in E.gens() for y in E.gens()}
\end{lstlisting}
\end{footnotesize}
It suffices to change \lstinline{A} by \lstinline{S} in the code above to compute the Riemannian curvature.
Moreover, the Chern curvature can be computed as:
\begin{footnotesize}
\begin{lstlisting}
sage: OmegaCh = {b: simp_mat(Omega[b], {t:1}) for b in Omega.keys()}
\end{lstlisting}
\end{footnotesize}

The first and the second Gauduchon-Ricci forms can be computed as:
\begin{footnotesize}
\begin{lstlisting}
sage: rho1 = 1/2*simp_mat(matrix(n,n,[sum([Omega[(E.gens()[i],E.gens()[j])][2*k,2*k+1]
                              for k in range(n/2)]) for i in range(n) for j in range(n)])
                              +matrix(n,n,[sum([J(E.gens()[i]).interior_product(u).constant_coefficient()
                              *J(E.gens()[j]).interior_product(v).constant_coefficient()
                              *Omega[(u,v)][2*k,2*k+1] for u in E.gens() for v in E.gens()
                              for k in range(n/2)]) for i in range(n) for j in range(n)]))
sage: rho2 = 1/2*(simp_mat(sum([Omega[(E.gens()[2*j],E.gens()[2*j+1])] for j in range(n/2)])
                           +sum([J(E.gens()[2*j]).interior_product(u).constant_coefficient()
                           *J(E.gens()[2*j+1]).interior_product(v).constant_coefficient()
                           *Omega[(u,v)] for u in E.gens() for v in E.gens() for j in range(n/2)])))
\end{lstlisting}
\end{footnotesize}
In particular, the first and the second Chern-Ricci curvatures are both zero, as shown by computing:
\begin{footnotesize}
\begin{lstlisting}
sage: rho1Ch = simp_mat(rho1, {t:1})
sage: rho2Ch = simp_mat(rho2, {t:1})
\end{lstlisting}
\end{footnotesize}

Finally, the scalar curvature is computed either as:
\begin{footnotesize}
\begin{lstlisting}
sage: scal = 2*sum([rho2[2*j,2*j+1] for j in range(n/2)])
\end{lstlisting}
\end{footnotesize}
or as:
\begin{footnotesize}
\begin{lstlisting}
sage: scal = 2*sum([rho1[2*j,2*j+1] for j in range(n/2)])
\end{lstlisting}
\end{footnotesize}
giving zero. (We stress here that we can use the built-in methods \lstinline{simplify_full} or \lstinline{factor} of \lstinline{sage.symbolic.expression.Expression}.)

\subsection{The Kodaira surface (see Section \ref{sec:kodaira})} \label{app:kodaira} \hfill \par

We can perform the computations for the Kodaira surface as in the previous Section, with small changes, starting by setting the dimension:
\begin{footnotesize}
\begin{lstlisting}
sage: n = 4
\end{lstlisting}
\end{footnotesize}

Here the code to construct the differential:
\begin{footnotesize}
\begin{lstlisting}
sage: f3 = -alpha*r*E.gens()[0]+beta*r*E.gens()[1]+v*E.gens()[3]
sage: d = E.coboundary({
            (0,1): -1/r^2*f3,
            (0,2): alpha/(r*v)*f3,
            (0,3): beta/(r*v)*f3,
            (1,2): -beta/(r*v)*f3,
            (1,3): alpha/(r*v)*f3,
            (2,3): -(alpha^2+beta^2)/(v^2)*f3
        })
\end{lstlisting}
\end{footnotesize}

\subsection{The Kodaira-Thurston almost-complex \texorpdfstring{$4$}{4}-manifold (see Section \ref{sec:kodairathurston-almost})}\label{app:kodaira-thurston} \hfill \par

We can perform the computations as in the previous Sections. We present here the code in order to compute the complex structure equations.

We start from the standard real frame $(e_0,e_1,e_2,e_3)$ with structure equations determined by $[e_0,e_1]=-e_3$:
\begin{footnotesize}
\begin{lstlisting}
sage: n = 4
sage: E = ExteriorAlgebra(SR, 'e', n)
sage: d = E.coboundary({
            (0,1): -E.gens()[3],
            (0,2): 0,
            (0,3): 0,
            (1,2): 0,
            (1,3): 0,
            (2,3): 0
        })
sage: print([d(b) for b in E.gens()])
[0, 0, 0, -e0*e1]
\end{lstlisting}
\end{footnotesize}

The non-integrable almost-complex structure is given by $Je_0=e_2$ and $Je_1=e_3$:
\begin{footnotesize}
\begin{lstlisting}
sage: Jmat=block_matrix([ [zero_matrix(2), -identity_matrix(2)],
                         [identity_matrix(2), zero_matrix(2)] ])
sage: J = E.lift_morphism(Jmat)
sage: print([J(b) for b in E.gens()])
[e2, e3, -e0, -e1]
\end{lstlisting}
\end{footnotesize}
We check the non-integrability:
\begin{footnotesize}
\begin{lstlisting}
sage: Nij={(a,b): -J(Lie(J(a),b)+Lie(a,J(b))+Lie(J(a),J(b))-Lie(a,b))
           for a in E.gens() for b in E.gens()}
sage: [Nij[(a,b)] for a in E.gens() for b in E.gens() if Nij[(a,b)]!=0]
[e1, e1, -e1, -e1, e1, -e1, -e1, e1]
\end{lstlisting}
\end{footnotesize}
We construct the coframe of $(1,0)$-forms $\f^1\=e^0-\ti e^2$, $\f^1\=e^1-\ti e^3$, where $(e^0,e^1,e^2,e^3)$ denotes the dual basis of $(e_0,e_1,e_2,e_3)$:
\begin{footnotesize}
\begin{lstlisting}
sage: varphi = [E.gens()[j]-I*J(E.gens()[j]) for j in range(n/2)]
sage: barvarphi = [E.gens()[j]+I*J(E.gens()[j]) for j in range(n/2)]
sage: varphi
[e0 - I*e2, e1 - I*e3]
\end{lstlisting}
\end{footnotesize}
Notice that the convention by SageMath for the action of the complex structure on the dual differs from our notation:
\begin{footnotesize}
\begin{lstlisting}
sage: all([J(b)==I*b for b in varphi])
True
sage: all([J(b)==-I*b for b in barvarphi])
True
\end{lstlisting}
\end{footnotesize}
We check that the structure equations in this coframe are
\begin{footnotesize}
\begin{lstlisting}
sage: [d(b) for b in varphi]
[0, I*e0*e1]
\end{lstlisting}
\end{footnotesize}
namely,
$$ \diff\f^1=0 \,\,,\qquad \diff\f^2=\tfrac{\ti}{4}\big(\f^{1}\wedge\f^2+\f^1\wedge\bar\f^2-\f^2\wedge\bar\f^1+\bar\f^1\wedge\bar\f^2\big) \,\,.$$

The generic almost-Hermitian metric is given by
$$2\omega=-\ti r^2\f^1\wedge\bar\f^1-\ti\sigma^2\f^2\wedge\bar\f^2+u\f^1\wedge\bar\f^2-\bar{u}\f^2\wedge\bar\f^1 \,\,,$$
where $r,\sigma\in\bR^{>0}$ and $u\in\bC$ satisfy $r^2\sigma^2>|u|^2$ (compare with Remark \ref{rmk:std-iwasawa}).
The following code will allow us to derive the $g$-orthonormal frame with respect to the generic metric $g$ associated to $\omega$ and $J$:
\begin{footnotesize}
\begin{lstlisting}
sage: _ = var("r sigma x y")
sage: omega = 1/2*(-I*r^2*varphi[0]*barvarphi[0]-I*sigma^2*varphi[1]*barvarphi[1]
                   +(x+I*y)*varphi[0]*barvarphi[1]-(x-I*y)*varphi[1]*barvarphi[0])
sage: P = matrix(n,n,[omega.interior_product(a*J(b)).constant_coefficient()
                      for a in E.gens() for b in E.gens()])

sage: def scalar_product(a,b,P=P):
        return (a.transpose()*P*b)[0,0]

sage: def GS(e):
        ftmp = []
        for j in range(len(e)):
            ftmp.append(e[j]-sum([scalar_product(e[j],ftmp[k])/scalar_product(ftmp[k],ftmp[k])*ftmp[k]
                                  for k in range(0,j)]))
        f = [1/sqrt(scalar_product(ftmp[j],ftmp[j]))*ftmp[j] for j in range(len(e))]
        return(f)

sage: fmat = GS([identity_matrix(n)[:,j] for j in range(n)])
sage: f = [sum([emat[j][k,0]*E.gens()[k] for k in range(n)]) for j in range(n)]
\end{lstlisting}
\end{footnotesize}
We now make the frame $w$ also $(J,g)$-unitary:
\begin{footnotesize}
\begin{lstlisting}
sage: w = [1/sqrt(2)*(f[j]-J(f[j])) for j in [0,3]]+[1/sqrt(2)*(f[j]+J(f[j])) for j in [0,3]]
\end{lstlisting}
\end{footnotesize}
namely, it is orthonormal and $J$ acts as $J(w_0)=w_2$, $J(w_1)=w_3$:
\begin{footnotesize}
\begin{lstlisting}
sage: all([matrix(n,n,[omega.interior_product(b*J(c)).constant_coefficient().simplify_full()
                       for b in w for c in w])==identity_matrix(n)]
          + [J(w[0])-w[2]==0, J(w[1])-w[3]==0])
True
\end{lstlisting}
\end{footnotesize}

We are now able to compute the structure equations with respect to the $(J,g)$-unitary frame $(w_0,w_1,w_2,w_3)$:
\begin{footnotesize}
\begin{lstlisting}
sage: muw = {(a,b,c): Lie(w[a], w[b]).interior_product(w[c]).constant_coefficient()
             for a in range(n) for b in range(n) for c in range(n)}
\end{lstlisting}
\end{footnotesize}
We can now construct the Lie algebra by using these structure equations:
\begin{footnotesize}
\begin{lstlisting}
sage: reset('E')
sage: E = ExteriorAlgebra(SR, 'e', n)
sage: struct_eq = {(j,k): sum([mue[(j,k,h)]*E.gens()[h] for h in range(n)])
                   for j in range(n) for k in range(n)}
sage: d = E.coboundary({(a,b): sum([muw[(a,b,c)]*E.gens()[c] for c in range(n)])
                        for a in range(n) for b in range(n)})
\end{lstlisting}
\end{footnotesize}
We check that the Jacobi identity is satisfied:
\begin{footnotesize}
\begin{lstlisting}
sage: all([d(d(b))==0 for b in E.gens()])
True
\end{lstlisting}
\end{footnotesize}

We now proceed by constructing the variables \lstinline{mu}, \lstinline{F}, \lstinline{Nij}, \lstinline{S}.
We note that the almost-Hermitian structure is almost-K\"ahler, namely, $\diff\omega=0$.
We need to modify the formula for \lstinline{A}, in order to include the terms coming from the non-vanishing Nijenhuis tensor:
\begin{footnotesize}
\begin{lstlisting}
sage: A = {x: simp_mat(matrix(n,n,[S[x][E.gens().index(y),E.gens().index(z)]
                                   +(t+1)/4*F.interior_product(x*J(y)*J(z)).constant_coefficient()
                                   +(t-1)/4*F.interior_product(x*y*z).constant_coefficient()
                                   +1/4*Nij[(y,z)].interior_product(x).constant_coefficient()
                                   for y in E.gens() for z in E.gens()])) for x in E.gens()}
\end{lstlisting}
\end{footnotesize}
We also compute the variables \lstinline{ACh}, the curvatures \lstinline{Omega} and \lstinline{OmegaCh}, the Ricci forms \lstinline{Ric1} and \lstinline{Ric2}, \lstinline{Ric1Ch} and \lstinline{Ric2Ch}, the scalar curvature \lstinline{scal}:
\begin{footnotesize}
\begin{lstlisting}
sage: latex(scal.simplify_full())
-\frac{r^{2}}{r^{4} \sigma^{4} - 2 \, r^{2} \sigma^{2} x^{2}
    + x^{4} + y^{4} - 2 \, {\left(r^{2} \sigma^{2} - x^{2}\right)} y^{2}}
\end{lstlisting}
\end{footnotesize}

\end{document}